\def\textmatrix#1&#2\\#3&#4\\{\bigl({#1 \atop #3}\ {#2 \atop #4}\bigr)}
\def\dispmatrix#1&#2\\#3&#4\\{\left({#1 \atop #3}\ {#2 \atop #4}\right)}
\newcommand{\beg}{\begin{equation}}
\newcommand{\eeg}{\end{equation}}
\newcommand{\ben}{\begin{eqnarray*}}
\newcommand{\een}{\end{eqnarray*}}
\newtheorem{thm}{Theorem}[section]
\newtheorem{cor}[thm]{Corollary}
\newtheorem{lem}[thm]{Lemma}
\newtheorem{prop}[thm]{Proposition}
\numberwithin{equation}{section}
\theoremstyle{definition}
\newtheorem{defn}[thm]{Definition}
\newtheorem{rem}[thm]{Remark}
\newtheorem{eg}[thm]{Example}
\def\textmatrix#1&#2\\#3&#4\\{\bigl({#1 \atop #3}\ {#2 \atop #4}\bigr)}
\def\dispmatrix#1&#2\\#3&#4\\{\left({#1 \atop #3}\ {#2 \atop #4}\right)}
\begin{document}
\title[Subvarieties of the tetrablock]
{Subvarieties of the tetrablock and von Neumann's inequality}

\author[Sourav Pal]{Sourav Pal}
\address[Sourav Pal]{Mathematics Department, Indian Institute of Technology Bombay,
Powai, Mumbai - 400076, India.} \email{sourav@math.iitb.ac.in,
sourav@isid.ac.in}

\keywords{Tetrablock, Distinguished varieties, Spectral set,
Complete spectral set, Fundamental operators, Functional model,
von-Neumann's inequality}

\subjclass[2010]{32A60, 47A13, 47A15, 47A20, 47A25, 47A45, 47B35,
47B38}

\thanks{The author is supported by Seed Grant of IIT Bombay, CPDA and the INSPIRE
Faculty Award (Award No. DST/INSPIRE/04/2014/001462) of DST,
India.}

\begin{abstract}
We show an interplay between the complex geometry of the
tetrablock $\mathbb E$ and the commuting triples of operators
having $\overline{\mathbb E}$ as a spectral set. We prove that
$\overline{\mathbb E}$ being a $3$-dimensional domain does not
have any $2$-dimensional distinguished variety, every
distinguished variety in the tetrablock is one-dimensional and can
be represented as
\begin{equation}\label{eqn:1}
\Omega=\{ (x_1,x_2,x_3)\in \mathbb E \,:\, (x_1,x_2) \in
\sigma_T(A_1^*+x_3A_2\,,\, A_2^*+x_3A_1) \},
\end{equation}
where $A_1,A_2$ are commuting matrices of the same order
satisfying\\ $[A_1^*,A_1]=[A_2^*,A_2]$ and a norm condition. The
converse also holds, i.e, a set of the form (\ref{eqn:1}) is
always a distinguished variety in $\mathbb E$. We show that for a
triple of commuting operators $\Upsilon = (T_1,T_2,T_3)$ having
$\overline{\mathbb E}$ as a spectral set, there is a
one-dimensional subvariety $\Omega_{\Upsilon}$ of
$\overline{\mathbb E}$ depending on $\Upsilon$ such that
von-Neumann's inequality holds, i.e,
\[
f(T_1,T_2,T_3)\leq \sup_{(x_1,x_2,x_3)\in\Omega_{\Upsilon}}\,
|f(x_1,x_2,x_3)|,
\]
for any holomorphic polynomial $f$ in three variables, provided
that $T_3^n\rightarrow 0$ strongly as $n\rightarrow \infty$. The
variety $\Omega_\Upsilon$ has been shown to have representation
like (\ref{eqn:1}), where $A_1,A_2$ are the unique solutions of
the operator equations
\begin{gather*}
T_1-T_2^*T_3=(I-T_3^*T_3)^{\frac{1}{2}}X_1(I-T_3^*T_3)^{\frac{1}{2}}
\text{ and } \\
T_2-T_1^*T_3=(I-T_3^*T_3)^{\frac{1}{2}}X_2(I-T_3^*T_3)^{\frac{1}{2}}.
\end{gather*}
We also show that under certain condition, $\Omega_{\Upsilon}$ is
a distinguished variety in $\mathbb E$. We produce an explicit
dilation and a concrete functional model for such a triple
$(T_1,T_2,T_3)$ in which the unique operators $A_1,A_2$ play the
main role. Also, we describe a connection of this theory with the
distinguished varieties in the bidisc and in the symmetrized
bidisc.
\end{abstract}

\maketitle

\tableofcontents

\section{Introduction}

\noindent The tetrablock is a polynomially convex, non-convex and
inhomogeneous domain in $\mathbb C^3$ defined by
\[
\mathbb E = \{ (x_1,x_2,x_3)\in\mathbb C^3\,:\,
1-zx_1-wx_2+zwx_3\neq 0 \textup{ whenever } |z|\leq 1, |w|\leq 1
\}.
\]
This domain has attracted a lot of attention of the function
theorists, complex geometers and operator theorists over past one
decade because of its connection with $\mu$-synthesis and
$H^{\infty}$ control theory
(\cite{awy,awy-cor,young,EZ,EKZ,Zwo,tirtha,tirtha-sau,sourav1}).
The \textit{distinguished boundary} of the tetrablock, which is
same as the $\check{\textup{S}}$ilov boundary, was determined in
\cite{awy} (Theorem 7.1 in \cite{awy}) to be the set
\begin{align}
b{\mathbb E} & = \{ (x_1,x_2,x_3)\in\mathbb C^3\,:\, x_1=\bar
x_2x_3,\,|x_3|=1 \textup{ and } |x_2|\leq 1 \} \notag
\\ \label{dv1}& = \{ (x_1,x_2,x_3)\in\overline{\mathbb E}: |x_3|=1 \}.
\end{align}
Amongst the characterizations given in \cite{awy} of the points in
$\mathbb E$, the following is the most elegant one that clarifies
the geometric location of $\mathbb E$. This will be used
frequently in all the sections of this paper.
\begin{thm}\label{thm1}
A point $(x_1,x_2,x_3)\in \mathbb C^3$ is in $\mathbb E$
(respectively in $\overline{\mathbb E}$) if and only if $|x_3|< 1$
(respectively $\leq 1$) and there exist $\beta_1,\beta_2 \in
\mathbb C$ such that $|\beta_1|+|\beta_2| <1$ (respectively $\leq
1$) and $x_1=\beta_1 + \bar{\beta_2}x_3, \quad
x_2=\beta_2+\bar{\beta_1}x_3$.
\end{thm}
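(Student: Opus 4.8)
The plan is to reduce the two-variable non-vanishing condition defining $\mathbb{E}$ to a one-variable Schur-class condition and then match it against the $\beta$-parametrization. Two elementary observations set this up. First, for any $2\times 2$ matrix $A$ with diagonal $(x_1,x_2)$ and $\det A=x_3$ one has the identity
\[
1-zx_1-wx_2+zwx_3=\det\big(I_2-\operatorname{diag}(z,w)\,A\big),
\]
so if $\|A\|\le 1$ then $\operatorname{diag}(z,w)A$ is a (strict) contraction for the relevant $z,w$ and the left side cannot vanish. Second, setting $w=0$ and $z=0$ in the defining condition forces $|x_1|\le 1$ and $|x_2|\le 1$, and solving $1-zx_1-wx_2+zwx_3=0$ for $w$ shows the condition is equivalent to the single requirement that
\[
g(z):=\frac{x_2-x_3 z}{1-x_1 z}
\]
be a Schur function on $\mathbb{D}$ (mapping $\overline{\mathbb{D}}$ strictly inside $\mathbb{D}$ for $\mathbb{E}$, and with $\|g\|_\infty\le 1$ for $\overline{\mathbb{E}}$); here $|x_1|<1$ guarantees $g$ is holomorphic across $\overline{\mathbb{D}}$.

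For the direction in which $\beta_1,\beta_2$ are given, I would produce an explicit contraction $A$: take its off-diagonal entries with product $x_1x_2-x_3$ and equal modulus, so that $\det A=x_3$, and use the positivity criterion $\|A\|\le 1\iff\|A\|_{HS}^2\le 1+|\det A|^2$ for $2\times 2$ matrices. Contractivity of $A$ then reduces to the scalar inequality $|x_1|^2+|x_2|^2+2|x_1x_2-x_3|\le 1+|x_3|^2$, which a direct (phase-sensitive) computation shows follows from $|\beta_1|+|\beta_2|\le 1$ and $|x_3|\le 1$ after substituting $x_1=\beta_1+\bar\beta_2 x_3$, $x_2=\beta_2+\bar\beta_1 x_3$. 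Contractivity of $A$ (strict when $|\beta_1|+|\beta_2|<1$) makes $\operatorname{diag}(z,w)A$ a strict contraction, so $\det(I_2-\operatorname{diag}(z,w)A)\neq 0$ and the point lies in $\mathbb{E}$ (resp. $\overline{\mathbb{E}}$).

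For the converse, assume $(x_1,x_2,x_3)\in\overline{\mathbb{E}}$, so $g$ is Schur. When $|x_3|<1$ the two linear equations determine $\beta_1,\beta_2$ uniquely, namely $\beta_1=(x_1-\bar x_2 x_3)/(1-|x_3|^2)$ and $\beta_2=(x_2-\bar x_1 x_3)/(1-|x_3|^2)$, so it remains to prove $|x_1-\bar x_2 x_3|+|x_2-\bar x_1 x_3|\le 1-|x_3|^2$. I would extract a scalar bound by running one step of the Schur algorithm on $g$ and then convert it into this symmetric inequality. The case $|x_3|=1$ is separate: by \eqref{dv1} such a point of $\overline{\mathbb{E}}$ lies in $b\mathbb{E}$, hence $x_1=\bar x_2 x_3$ with $|x_2|\le 1$, and the choice $\beta_1=0$, $\beta_2=x_2$ satisfies both relations with $|\beta_1|+|\beta_2|=|x_2|\le 1$.

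The main obstacle is the quantitative step in the converse: passing from the bare Schur/non-vanishing condition to the sharp inequality $|\beta_1|+|\beta_2|\le 1$. The several natural reformulations — the Hilbert–Schmidt criterion $|x_1|^2+|x_2|^2+2|x_1x_2-x_3|\le 1+|x_3|^2$, the Schur-algorithm bound $|x_1-\bar x_2 x_3|+|x_1x_2-x_3|\le 1-|x_2|^2$, and the target $\beta$-inequality — all cut out the same region but are not related by any linear combination, so matching them requires a genuine elementary argument optimizing over the arguments of $x_1,x_2,x_3$. Conceptually this is exactly the $\mu$-synthesis content: non-vanishing of the polynomial must force the existence of a $2\times 2$ contraction with prescribed diagonal and determinant.
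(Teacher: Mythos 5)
You should first be aware that the paper itself contains no proof of this statement: Theorem \ref{thm1} is imported verbatim from \cite{awy} (it is one of the equivalent characterizations in Theorem 2.2 there), so what your attempt is really doing is reconstructing the Abouhajar--White--Young argument. Your roadmap is indeed theirs --- the reduction of nonvanishing of $1-zx_1-wx_2+zwx_3$ to the Schur condition on $g(z)=(x_2-x_3z)/(1-x_1z)$, the realization of $(x_1,x_2,x_3)$ as the diagonal entries and determinant of a $2\times 2$ contraction, and the Hilbert--Schmidt inequality are precisely the equivalences proved in \cite{awy}. But the proposal has a genuine gap at its crux, which you name yourself: in the converse direction you never actually pass from the Schur condition to the target inequality $|x_1-\bar x_2x_3|+|x_2-\bar x_1x_3|\le 1-|x_3|^2$. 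One step of the Schur algorithm on $g$ (using $g(0)=x_2$) yields only $|x_1-\bar x_2x_3|+|x_1x_2-x_3|\le 1-|x_2|^2$, and the conversion of this asymmetric bound into the symmetric $\beta$-inequality is exactly the substantive elementary work in \cite{awy} (their Lemma 2.1 and the proof of Theorem 2.2); your closing paragraph concedes that you do not have this step. A proof attempt that stops at ``requires a genuine elementary argument optimizing over the arguments of $x_1,x_2,x_3$'' has not proved the hard half of the theorem. The forward direction has the same defect in milder form: the inequality $|x_1|^2+|x_2|^2+2|x_1x_2-x_3|\le 1+|x_3|^2$ does follow from the parametrization, but not by naive estimates --- writing $x_1x_2-x_3=\beta_1\beta_2+\bar{\beta_1}\bar{\beta_2}x_3^2-x_3(1-|\beta_1|^2-|\beta_2|^2)$ and applying the triangle inequality throughout reduces matters to $2|x_3|\bigl(1-(|\beta_1|-|\beta_2|)^2\bigr)\le \bigl(1-(|\beta_1|+|\beta_2|)^2\bigr)(1+|x_3|^2)$, which is false in general --- so ``a direct (phase-sensitive) computation shows'' is an assertion, not a proof, and it is precisely where the phases must be exploited.

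Two further points. The criterion you quote, $\|A\|\le 1\iff \|A\|_{HS}^2\le 1+|\det A|^2$ for $2\times 2$ matrices, is false as stated (take $A=2I$: $8\le 17$ but $\|A\|=2$); in terms of singular values the right-hand inequality says $(1-s_1^2)(1-s_2^2)\ge 0$, so it characterizes contractivity only under the side condition $|\det A|\le 1$, which you do have here but must invoke. More seriously for the logic, in the case $|x_3|=1$ you appeal to (\ref{dv1}), i.e., to the determination of the distinguished boundary $b\mathbb E$ (Theorem 7.1 of \cite{awy}); that result sits downstream of the membership characterizations you are trying to prove, so using it makes the from-scratch argument circular --- it is harmless only if, as in the present paper, Theorem \ref{thm1} is treated as quoted background rather than something to be proved. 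Also note that for the closed tetrablock the forward direction is cheaper than your matrix construction: given $|\beta_1|+|\beta_2|\le 1$ and $|x_3|\le 1$, scale $(\beta_1,\beta_2,x_3)\mapsto (t\beta_1,t\beta_2,tx_3)$ with $t\uparrow 1$ and use that $\overline{\mathbb E}$ is the closure of $\mathbb E$, so only the strict case needs the contraction $A$.
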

It is evident from the above theorem that the tetrablock lives
inside the tridisc $\mathbb D^3$ and that the topological boundary
$\partial \mathbb E$ of the tetrablock is given by
\begin{align*}
\partial \mathbb E & =\{ (x_1,x_2,x_3)\in\overline{\mathbb E}:\, |x_3|=1
\}\\& \quad \cup \{ (x_1,x_2,x_3)\in\overline{\mathbb E}:\,
|x_3|<1, |\beta_1|+|\beta_2|=1 \}\\& = b\mathbb E \cup \{
(x_1,x_2,x_3)\in\overline{\mathbb E}:\, |x_3|<1,
|\beta_1|+|\beta_2|=1 \},
\end{align*}
where $\beta_1, \beta_2$ are as of Theorem \ref{thm1}.\\

A variety $V_S$ in $\mathbb C^n$, where $S$ is a set of
polynomials in $n$-variables $z_1,\dots,z_n$, is a subset of
$\mathbb C^n$ defined by
\[
V_S= \{ (z_1,\hdots,z_n)\in\mathbb C^n \,:\, p(z_1,\hdots,z_n)=0,
\quad \text{ for all } p\in S \}.
\]
A variety $W$ in a domain $G\subseteq \mathbb C^n$ is the part of
a variety lies inside $G$, i.e, $W=V_S\cap G$ for some set $S$ of
polynomials in $n$-variables. A {\em distinguished variety} in a
polynomially convex domain $G$ is a variety that intersects the
topological boundary of $G$ at its $\check{\textup{S}}$ilov
boundary. See \cite{AM05,AM06,Knese10,pal-shalit} to follow some
recent works on disitnguished varieties in the bidisc and in the
symmetrized bidisc. Therefore, in particular a distinguished
variety in the tetrablock is defined in the following way.
\begin{defn}
A set $\Omega \subseteq \mathbb E$ is said to be a distinguished
variety in the tetrablock if $\Omega$ is a variety in $\mathbb E$
such that
\begin{equation}\label{eqn01}
\overline{\Omega}\cap \partial \mathbb E = \overline{\Omega}\cap
b\overline{\mathbb E}.
\end{equation}
\end{defn}
We denote by $\partial \Omega$ the set described in (\ref{eqn01}).
It is evident from the definition that a distinguished variety in
$\mathbb E$ is a one or two-dimensional variety in $\mathbb E$
that exits the tetrablock through the distinguished boundary $b\mathbb E$.\\

The main aim of this paper is to build and explain a connection
between the complex geometry of the domain $\mathbb E$ and the
triple of commuting operators having $\overline{\mathbb E}$ as a
spectral set. The principal source of motivation for us is the
seminal paper \cite{AM05} of Agler and M$^{\textup{c}}$Carthy. In
our first main result Theorem \ref{thm:DVchar}, we show that no
distinguished variety in $\mathbb E$ can be $2$-dimensional, all
distinguished varieties in $\mathbb E$ have complex dimension one
and can be represented as
\begin{equation}\label{eqn1}
\{(x_1,x_2,x_3)\in\mathbb E\,:\, (x_1,x_2)\in
\sigma_T(A_1^*+x_3A_2, A_2^*+x_3A_1) \},
\end{equation}
where $A_1,A_2$ are commuting matrices of same order such that
$[A_1^*,A_1]=[A_2^*,A_2]$ and that $\|A_1^*+A_2z\|_{\infty,
\mathbb T}\leq 1$, $\mathbb T$ being the unit circle in the
complex plane and
\[
\|A_1^*+A_2z\|_{\infty, \mathbb T}=\sup_{z\in\mathbb
T}\|A_1^*+A_2z\|.
\]
Here $\sigma_T(S_1,\cdots,S_n)$ denotes the \textit{Taylor joint
spectrum} of a commuting $n$-tuple of operators $(S_1,\hdots,S_n)$
which consists of joint eigenvalues of $S_1,\cdots,S_n$ only when
they are matrices (see \cite{KLMV} for a detailed proof of this
fact). Since only the case $n=2$ is used here, we shall have a
brief discussion on Taylor joint spectrum of a pair of commuting
bounded operators at the beginning of section 2. Also $[S_1,S_2]$
denotes the commutator $S_1S_2-S_2S_1$. Conversely, every subset
of the form (\ref{eqn1}) is a distinguished variety in $\mathbb E$
provided that $\|A_1^*+A_2z\|_{\infty, \mathbb T}<1$. Examples
show that a set of this kind may or may not be a distinguished
variety if $\|A_1^*+A_2z\|_{\infty, \mathbb T}=1$. It is
surprising that the tetrablock, being a domain of complex
dimension $3$, does not have a two-dimensional distinguished
variety. Thus, the study of the distinguished varieties in
$\mathbb E$ leads us to operator theory, in particular to matrix
theory.

On the other hand, another main result, Theorem \ref{thm:VN},
shows that if a triple of commuting operators
$\Upsilon=(T_1,T_2,T_3)$, defined on a Hilbert space $\mathcal H$,
has $\overline{\mathbb E}$ as a spectral set and if the
fundamental operators $A_1,A_2$ of $\Upsilon$ are commuting
matrices satisfying $[A_1^*,A_1]=[A_2^*,A_2]$, then there is a
one-dimensional subvariety $\Omega_\Upsilon$ of $\overline{\mathbb
E}$ such that $\Omega_\Upsilon$ is a spectral set for $\Upsilon$.
The fundamental operators $A_1,A_2$ are two unique operators
associated with such a commuting triple $(T_1,T_2,T_3)$ and are
explained below. Also $\omega (T)$ denotes the numerical radius of
an operator $T$. The one-dimensional subvariety $\Omega_\Upsilon$
is obtained in terms of the fundamental operators $A_1,A_2$ in the
following way,
\[
\Omega_\Upsilon =\{ (x_1,x_2,x_3)\in\overline{\mathbb E}\,:\,
(x_1,x_2)\in\sigma_T(A_1^*+x_3A_2,A_2^*+x_3A_1) \}.
\]
 Moreover, when $A_1,A_2$ satisfy the condition
$\omega(A_1+A_2z)<1$, for all $z\in\mathbb T$, $\Omega_\Upsilon
\cap \mathbb E$ is a distinguished variety in $\mathbb E$. So, it
is remarkable that for such an operator triple $\Upsilon$ one can
extract a one-dimensional curve from ($3$-dimensional) $\mathbb E$
on which $\Upsilon$ lives. Therefore, the study of commuting
operator triples that have $\overline{\mathbb E}$ as a spectral
set, takes us back to the complex geometry of $\overline{\mathbb
E}$.

\begin{defn}
A commuting triple of operators $(T_1,T_2,T_3)$ that has
$\overline{\mathbb E}$ as a spectral set is called an $\mathbb
E$-\textit{contraction}, i.e, an $\mathbb E$-contraction is a
commuting triple $(T_1,T_2,T_3)$ such that
$\sigma_T(T_1,T_2,T_3)\in \overline{\mathbb E}$ and that for every
holomorphic polynomial $p$ in three variables
\[
\| p(T_1,T_2,T_3) \|\leq \sup_{(x_1,x_2,x_3)\in\overline{\mathbb
E}}\, |p(x_1,x_2,x_3)|=\| p \|_{\infty, \overline{\mathbb E}}.
\]
\end{defn}
Since the set $\mathbb E$ is contained in the tridisc $\mathbb
D^3$ as was shown in \cite{awy}, an $\mathbb E$-contraction
consists of commuting contractions. Also it is merely mentioned
that if $(T_1,T_2,T_3)$ is an $\mathbb E$-contraction then so is
the adjoint $(T_1^*,T_2^*,T_3^*)$. In \cite{tirtha}, Bhattacharyya
introduced the study of $\mathbb E$-contractions by using the
efficient machinery so called \textit{fundamental operators}. We
mention here that in \cite{tirtha}, a triple $(T_1,T_2,T_3)$ that
had $\overline{\mathbb E}$ as a spectral set was called a
\textit{tetrablock contraction}. But since a notation is always
more convenient when writing, we prefer to call them $\mathbb
E$-contractions. It was shown in Theorem 3.5 in \cite{tirtha} that
to every $\mathbb E$-contraction $(T_1,T_2,T_3)$, there are two
unique operators $A_1,A_2$ on $\mathcal D_{T_3}$ such that
\[
T_1-T_2^*T_3=D_{T_3}A_1D_{T_3} \text{ and }
T_2-T_1^*T_3=D_{T_3}A_2D_{T_3}.
\]
For a contraction $T$, we shall always denote by $D_T$ the
positive operator $(I-T^*T)^{\frac{1}{2}}$ and $\mathcal
D_T=\overline{Ran}\,D_T$. An explicit dilation was constructed in
\cite{tirtha} for a particular class of $\mathbb E$-contractions
(see Theorem 6.1 in \cite{tirtha}). Since these two operators
$A_1,A_2$ were the key ingredients in that construction, they were
named the \textit{fundamental operators} of $(T_1,T_2,T_3)$. The
fundamental operators always satisfy $\omega(A_1+A_2z)\leq 1$, for
all $z$ in $\mathbb T$. For a further reading on $\mathbb
E$-contractions, fundamental operators and their properties, see
\cite{tirtha-sau, tirtha-sau-lata}. Also in a different direction
to know about operator theory and failure of rational dilation on
the tetrablock, an interested reader is
referred to \cite{sourav1, sourav2}.\\

Unitaries, isometries and co-isometries are special types of
contractions. There are natural analogues of these classes for
$\mathbb E$-contractions in the literature.
\begin{defn}
Let $T_1,T_2,T_3$ be commuting operators on a Hilbert space
$\mathcal H$. We say that $(T_1,T_2,T_3)$ is
\begin{itemize}
\item [(i)] an $\mathbb E$-\textit{unitary} if $T_1,T_2,T_3$ are
normal operators and the joint spectrum $\sigma_T(T_1,T_2,T_3)$ is
contained in $b\mathbb E$ ; \item [(ii)] an $\mathbb
E$-\textit{isometry} if there exists a Hilbert space $\mathcal K$
containing $\mathcal H$ and an $\mathbb E$-unitary
$(\tilde{T_1},\tilde{T_2},\tilde{T_3})$ on $\mathcal K$ such that
$\mathcal H$ is a common invariant subspace of $T_1,T_2,T_3$ and
that $T_i=\tilde{T_i}|_{\mathcal H}$ for $i=1,2,3$ ; \item [(iii)]
an $\mathbb E$-\textit{co-isometry} if $(T_1^*,T_2^*,T_3^*)$ is an
$\mathbb E$-isometry.
\end{itemize}
\end{defn}

\begin{defn}
An $\mathbb E$-contraction $(T_1,T_2,T_3)$ is said to be
\textit{pure} if $T_3$ is a pure contraction, i.e, ${T_3^*}^n
\rightarrow 0$ strongly as $n \rightarrow \infty$. Similarly an
$\mathbb E$-isometry $(T_1,T_2,T_3)$ is said to be a pure $\mathbb
E$-isometry if $T_3$ is a pure isometry, i.e, equivalent to a
shift operator.
\end{defn}
\begin{defn}
Let $(T_1,T_2,T_3)$ be a $\mathbb E$-contraction on $\mathcal H$.
A commuting triple $(Q_1,Q_2,V)$ on $\mathcal K$ is said to be an
$\mathbb E$-isometric dilation of $(T_1,T_2,T_3)$ if $\mathcal H
\subseteq \mathcal K$, $(Q_1,Q_2,V)$ is an $\mathbb E$-isometry
and
\[
P_{\mathcal H}(Q_1^{m_1}Q_2^{m_2}V^n)|_{\mathcal
H}=T_1^{m_1}T_2^{m_2}T_3^n, \; \textup{ for all non-negative
integers }m_1,m_2,n.
\]
Here $P_{\mathcal H}:\mathcal K \rightarrow
\mathcal H$ is the orthogonal projection of $\mathcal K$ onto
$\mathcal H$. Moreover, the dilation is called {\em minimal} if
\[
\mathcal K=\overline{\textup{span}}\{ Q_1^{m_1}Q_2^{m_2}V^n h\,:\;
h\in\mathcal H \textup{ and }m_1,m_2,n\in \mathbb N \cup \{0\} \}.
\]
\end{defn}

In section 3, we add to the account some operator theory on the
tetrablock. In Theorem \ref{dilation-theorem}, another main result
of this paper, we construct an $\mathbb E$-isometric dilation to a
pure $\mathbb E$-contraction $(T_1,T_2,T_3)$ whose adjoint has
commuting fundamental operators ${A_1}_*,{A_2}_*$ such that
$[{A_1}_*^*,{A_1}_*]=[{A_2}_*^*,{A_2}_*]$. This dilation is
different from the one established in \cite{tirtha} (Theorem 6.1
in \cite{tirtha}) and here the dilation operators involve the
fundamental operators of the adjoint $(T_1^*,T_2^*,T_3^*)$. We
show further that the dilation is minimal. As a consequence of
this dilation, we obtain a functional model in Theorem
\ref{modelthm} for such pure $\mathbb E$-contractions in terms of
commuting Toeplitz operators on the vectorial Hardy space
$H^2(\mathcal D_{T_3^*})$. Theorem \ref{sufficient1} describes a
set of sufficient conditions under which a triple of commuting
contractions $(T_1,T_2,T_3)$ becomes an $\mathbb E$-contraction.
Indeed, if there are two commuting operators $A_1,A_2$ that
satisfy $T_1-T_2^*T_3=D_{T_3}A_1D_{T_3}$ and
$T_2-T_1^*T_3=D_{T_3}A_2D_{T_3}$, then $(T_1,T_2,T_3)$ is an
$\mathbb E$-contraction provided that $[A_1^*,A_1]=[A_2^*,A_2]$
and $\omega(A_1+A_2z)\leq 1$, for every $z$ in the unit circle.
Also in Corollary \ref{dv11}, we show that a pair of commuting
operators $A_1,A_2$ on a Hilbert space $E$, satisfying
$[A_1^*,A_1]=[A_2^*,A_2]$ and $\omega(A_1+A_2z)\leq 1$, for all
$z$ of unit modulus, are the fundamental operators of an $\mathbb
E$-contraction defined on the vectorial Hardy space $H^2(E)$. This
can be treated as a partial converse to the existence-uniqueness
theorem (Theorem 3.5 in \cite{tirtha}) of fundamental operators.
Therefore, Theorem \ref{thm:DVchar} can be rephrased in the
following way: every distinguished variety in $\mathbb E$ can be
represented as
\[
\{ (x_1,x_2,x_3)\in\mathbb E\,:\, (x_1,x_2)\in
\sigma_T(A_1^*+x_3A_2, A_2^*+x_3A_1) \}
\]
where $A_1,A_2$ are the fundamental operators of an $\mathbb
E$-contraction. In Theorem \ref{char:1}, we characterize all
distinguished varieties for which $\|A_1^*+A_2z\|_{\infty, \mathbb
T}<1$.\\

We describe Theorem \ref{thm:VN} again from a different view
point; if the adjoint of an $\mathbb E$-contraction
$(T_1,T_2,T_3)$ is a pure $\mathbb E$-contraction and if the
fundamental operators $A_1,A_2$ of $(T_1,T_2,T_3)$ are commuting
matrices satisfying $[A_1^*,A_1]=[A_2^*,A_2]$, then Theorem
\ref{thm:VN} shows the existence of a one-dimensional subvariety
on which von-Neumann's inequality holds. Indeed, Theorem
\ref{dilation-theorem} provides an $\mathbb E$-co-isometric
extension of such a $(T_1,T_2,T_3)$
that naturally lives on that subvariety.\\

In section 5, we describe a connection between the distinguished
varieties in the tetrablock with that in the bidisc $\mathbb D^2$
and in the symmetrized bidisc $\mathbb G$, where
\[
\mathbb G=\{ (z_1+z_2,z_1z_2)\,:\, |z_1|<1,|z_2|<1 \}\subseteq
\mathbb C^2.
\]
Indeed, in Theorem \ref{connection}, we show that every
distinguished variety in $\mathbb E$ gives rise to a distinguished
variety in $\mathbb D^2$ as well as a distinguished variety in
$\mathbb G$.\\

In section 2, we briefly describe the Taylor joint spectrum of a
pair of commuting bounded operators and also recall from the
literature some results about the $\mathbb E$-contractions.\\

\noindent \textbf{Note.} After writing this paper, we learned that
Theorem \ref{modelthm} of this paper has been established
independently in \cite{sau} by Sau in non-commutative setting.

\section{Taylor joint spectrum and preliminary results about $\mathbb E$-contractions}

\subsection{Taylor joint spectrum} Here we briefly describe the Taylor joint spectrum of a
pair of commuting bounded operators and show how in case of
commuting matrices it becomes just the set of joint eigenvalues.
In fact all notions of joint spectrum (left/right Hart, Taylor)
are the same for a pair of commuting matrices. Let
$\underline{T}=(T_1,T_2)$ be a pair commuting bounded operators on
a Banach space $X$. Let us consider the complex

\begin{equation}\label{Koszul}
0\rightarrow X \xrightarrow{\delta_1} X\oplus X
\xrightarrow{\delta_2} X \rightarrow 0 \,,
\end{equation}
where $\delta_1$ and $\delta_2$ are defined by
$\delta_1x=T_1x\oplus T_2x$ $(x\in X)$ and $\delta_2(x_1\oplus
x_2)=-T_2x_1+T_1x_2$ $(x_1,x_2\in X)$. Clearly $T_1T_2=T_2T_1$
implies that $\delta_2\circ \delta_1=0$ so that (\ref{Koszul}) is
a chain complex. This chain complex is called the \textit{Koszul
complex} of $\underline{T}$. To say that the Koszul complex of
$\underline{T}$ is \textit{exact} means three things: at the first
and the third stage one has respectively
\[
Ker(T_1)\cap Ker(T_2)=\{0\} \text{ and } Ran(\delta_2)=X.
\]
In the second stage it means that $Ran{\delta_1}=Ker(\delta_2)$,
that is, for every point $x_1\oplus x_2 \in X\oplus X$ for which
$-T_2x_1+T_1x_2=0$ has the form $T_1x\oplus T_2x$ for some $x\in
X$. $(T_1,T_2)$ is said to be \textit{Taylor regular} if its
Koszul complex is exact. A point $\lambda=(\lambda_1,\lambda_2)\in
\mathbb C^2$ is said to belong to $\sigma_T(T_1,T_2)$, the
\textit{Taylor joint spectrum} of $(T_1,T_2)$, if $(T_1-\lambda_1,
T_2-\lambda_2)$ is not Taylor regular. For an explicit description
of Taylor joint spectrum in general setting, i.e, of an $n$-tuple
of commuting bounded operators one can see \cite{Taylor, Taylor1,
Muller}.\\

We now show that $\sigma_T(T_1,T_2)$ is just the set of joint
eigenvalues of $(T_1,T_2)$ when $T_1,T_2$ are matrices. We need
the following results before that.

\begin{lem}\label{spectra}
Let $X_1,X_2$ are Banach spaces and $A,D$ are bounded operators on
$X_1$ and $X_2$. Let $B\in\mathcal B(X_2,X_1)$. Then
\[
\sigma \left(\begin{bmatrix} A&B\\ 0&D \end{bmatrix}
\right)\subseteq \sigma(A)\cup \sigma(D)\,.
\]
\end{lem}
See Lemma 1 in \cite{hong} for a proof of this result.

\begin{lem}\label{spectra1}
Let $\underline{T}=(T_1,T_2)$ be a commuting pair of matrices on
an $N$-dimensional vector space $X$. Then there exists $N+1$
subspaces $L_0,L_1,\hdots,L_N$ satisfying:
\begin{enumerate}
\item $\{0\}=L_0\subseteq L_1 \subseteq \cdots \subseteq L_N=X$ ,
\item $L_k$ is $k$-dimensional for $k=1,\hdots,N$ , \item each
$L_k$ is a joint invariant subspace of $T_1, T_2$.
\end{enumerate}
\end{lem}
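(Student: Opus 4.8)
The plan is to prove the statement by constructing a complete flag of joint invariant subspaces, the key ingredient being that over $\mathbb{C}$ a commuting pair of matrices always has a common eigenvector; the flag is then built by induction on $N=\dim X$.

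First I would establish the common eigenvector. Since $\mathbb{C}$ is algebraically closed, $T_1$ has an eigenvalue $\lambda_1$, so the eigenspace $E=\ker(T_1-\lambda_1 I)$ is a nonzero subspace of $X$. The hypothesis $T_1T_2=T_2T_1$ forces $E$ to be invariant under $T_2$: if $T_1x=\lambda_1 x$ then $T_1(T_2x)=T_2(T_1x)=\lambda_1(T_2x)$. Restricting $T_2$ to the nonzero space $E$ and again invoking algebraic closedness, $T_2|_E$ has an eigenvector $v\in E\setminus\{0\}$. This $v$ is then a common eigenvector of $T_1$ and $T_2$, and $L_1:=\mathrm{span}\{v\}$ is a one-dimensional subspace that is jointly invariant under both $T_1$ and $T_2$.

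Next I would run the induction. The case $N=1$ is immediate by taking $L_0=\{0\}$ and $L_1=X$. For the inductive step, consider the quotient space $\tilde X=X/L_1$, which has dimension $N-1$. Because $L_1$ is jointly invariant, $T_1$ and $T_2$ descend to well-defined operators $\tilde T_1,\tilde T_2$ on $\tilde X$, and these still commute. By the induction hypothesis applied to $(\tilde T_1,\tilde T_2)$ there exist subspaces $\{0\}=\tilde L_0\subseteq \tilde L_1\subseteq\cdots\subseteq \tilde L_{N-1}=\tilde X$ with $\dim\tilde L_j=j$, each jointly invariant under $\tilde T_1,\tilde T_2$. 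Letting $\pi:X\to\tilde X$ be the quotient map, I would set $L_{j+1}:=\pi^{-1}(\tilde L_j)$ for $j=0,\dots,N-1$ and $L_0:=\{0\}$. Standard facts about quotient maps give $\dim L_{j+1}=\dim\tilde L_j+\dim L_1=j+1$, the chain $L_0\subseteq L_1\subseteq\cdots\subseteq L_N=X$, and, since $\pi$ intertwines $T_i$ with $\tilde T_i$ via $\pi T_i=\tilde T_i\pi$, the joint invariance $T_i L_{j+1}\subseteq L_{j+1}$. This produces the desired flag $L_0,\dots,L_N$.

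The only genuinely substantive point is the existence of the common eigenvector, which rests on the algebraic closedness of $\mathbb{C}$; everything after that is routine bookkeeping with quotient spaces, so I do not expect a real obstacle beyond verifying that joint invariance is preserved when passing to the quotient and pulling back, which follows directly from the intertwining relation $\pi T_i=\tilde T_i\pi$.
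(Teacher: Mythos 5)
Your proof is correct and follows essentially the same route as the paper: find a common eigenvector (giving $L_1$), pass to the quotient $X/L_1$, and iterate; the paper does this as a repeated construction choosing vectors $x_1,x_2,\dots$, while you package it as induction with the flag pulled back via $\pi^{-1}$, which is the same argument. You also usefully spell out the existence of the joint eigenvector (the $T_2$-invariance of $\ker(T_1-\lambda_1 I)$), a point the paper merely calls elementary.
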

\begin{proof}
It is elementary to see that for a pair of commuting matrices
$(T_1,T_2)$ the set of joint eigenvalues is non-empty. Therefore,
there exists a vector $x_1\in X$ such that $x_1$ is a joint
eigenvector of $T_1$ and $T_2$. Let $L_1$ be the one-dimensional
subspace spanned by $x_1$. Then $L_1$ is invariant under
$T_1,T_2$. Next consider the vector space $Y=X/L_1$ and the linear
transformations $\tilde{T_1},\tilde{T_2}$ on $Y$ defined by
$\tilde{T_i}(x+L_1)=T_ix+L_1$. Then $\tilde{T_1},\tilde{T_2}$ are
commuting matrices and again they have a joint eigenvalue, say
$(\mu_1,\mu_2)$ and consequently a joint eigenvector, say
$x_2+L_1$. Thus $\tilde{T_i}(x_2+L_1)=\mu_ix_2+L_1$ for $i=1,2$
which means that $T_ix_2=\mu_ix_2+z$ for some $z\in L_1$. Hence
the subspace spanned by $x_1,x_2$ is invariant under $T_1,T_2$. We
call this subspace $L_2$ and it is two-dimensional with
$L_1\subseteq L_2$. Now applying the same reasoning to $X/L_2$ and
so on, we get for each $i=1,\hdots,N-1$ the subspace $L_i$ spanned
by $x_1,\hdots,x_i$. These subspaces satisfy the conditions of the
theorem. Finally, to complete the proof we define $L_N=X$.
\end{proof}

Let us choose an arbitrary $x_N\in L_N\setminus L_{N-1}$. Then
$\{x_1,\hdots,x_N\}$ is a basis for $X$ and with respect to this
basis the matrices $T_1,T_2$ are upper-triangular, i.e., of the
form
\[
\begin{pmatrix}
\lambda_1 & \ast & \ast & \ast \\
0&\lambda_2 & \ast & \ast \\
\vdots & \vdots &\ddots & \vdots \\
0 & 0 & \cdots & \lambda_N
\end{pmatrix}\,,\,
\begin{pmatrix}
\mu_1 & \ast & \ast & \ast \\
0&\mu_2 & \ast & \ast \\
\vdots & \vdots &\ddots & \vdots \\
0 & 0 & \cdots & \mu_N
\end{pmatrix}\,,
\]
where each $(\lambda_i,\mu_i)$ is called a \textit{joint diagonal
co-efficient} of $(T_1,T_2)$. Let us denote
$\sigma_{dc}(T_1,T_2)=\{(\lambda_i,\mu_i)\,:\, i=1,\cdots,N \}$.\\

The following result is well known and an interested reader can
see \cite{curto} for further details. We present here a simple and
straight forward proof to this.

\begin{thm}
Let $(T_1,T_2)$ be a pair of commuting matrices of order $N$ and
$\sigma_{pt}(T_1,T_2)$ be the set of joint eigenvalues of
$(T_1,T_2)$. Then
\[
\sigma_T(T_1,T_2)=\sigma_{pt}(T_1,T_2)=\sigma_{dc}(T_1,T_2).
\]
\end{thm}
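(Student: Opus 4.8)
The plan is to establish the two equalities $\sigma_T(T_1,T_2)=\sigma_{pt}(T_1,T_2)$ and $\sigma_{pt}(T_1,T_2)=\sigma_{dc}(T_1,T_2)$ separately, leveraging the simultaneous upper-triangularization furnished by Lemma \ref{spectra1}. First I would fix the flag $\{0\}=L_0\subseteq L_1\subseteq\cdots\subseteq L_N=X$ from Lemma \ref{spectra1} and the resulting basis $\{x_1,\dots,x_N\}$ in which $T_1,T_2$ are upper-triangular with joint diagonal coefficients $\sigma_{dc}(T_1,T_2)=\{(\lambda_i,\mu_i)\}$. The cleanest inclusion is $\sigma_{dc}\subseteq\sigma_{pt}$: each step of the flag gives a vector $x_k+L_{k-1}$ that is a joint eigenvector of the induced operators on $L_k/L_{k-1}$ with eigenvalue $(\lambda_k,\mu_k)$, but one must actually produce a genuine joint eigenvector in $X$ for that pair. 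I would argue that if $(\lambda,\mu)$ is a joint diagonal coefficient then $(T_1-\lambda,T_2-\mu)$ is upper-triangular with a zero somewhere on each diagonal, hence has a nontrivial common kernel; the reverse inclusion $\sigma_{pt}\subseteq\sigma_{dc}$ follows because any joint eigenvalue must appear as a diagonal entry of both triangular matrices simultaneously at a common index. This hands us $\sigma_{pt}=\sigma_{dc}$.

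Next I would tackle $\sigma_T=\sigma_{pt}$. For the inclusion $\sigma_{pt}\subseteq\sigma_T$, suppose $(\lambda,\mu)$ is a joint eigenvalue; then $\mathrm{Ker}(T_1-\lambda)\cap\mathrm{Ker}(T_2-\mu)\neq\{0\}$, so the Koszul complex of $(T_1-\lambda,T_2-\mu)$ fails exactness already at the first stage (the map $\delta_1$ is not injective), and therefore $(\lambda,\mu)\in\sigma_T(T_1,T_2)$ directly from the definition given in Section 2. The harder direction is $\sigma_T\subseteq\sigma_{pt}$: I must show that if $(\lambda,\mu)$ is \emph{not} a joint eigenvalue, then the Koszul complex of $(T_1-\lambda,T_2-\mu)$ is exact. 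Replacing $(T_1,T_2)$ by $(T_1-\lambda,T_2-\mu)$, it suffices to prove that when $(0,0)$ is not a joint eigenvalue the complex $0\to X\xrightarrow{\delta_1}X\oplus X\xrightarrow{\delta_2}X\to 0$ is exact. Exactness at the first stage is immediate from $\mathrm{Ker}(T_1)\cap\mathrm{Ker}(T_2)=\{0\}$. For the remaining two stages I would exploit finite-dimensionality via a Euler-characteristic / rank count: since $X$ is $N$-dimensional, the alternating sum of dimensions forces the dimensions of the homology groups at all three stages to be linked, so that vanishing homology at the ends together with the dimension identity $\dim\mathrm{Ran}(\delta_1)-\dim\mathrm{Ker}(\delta_2)=0$ yields exactness in the middle and surjectivity of $\delta_2$.

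I expect the middle-stage exactness to be the main obstacle, since it is the one place where commutativity and the finite-dimensional rank relations must be combined carefully rather than read off directly. To organize this I would compute $\dim\mathrm{Ran}(\delta_1)=N$ (injectivity of $\delta_1$), so $\dim\mathrm{Ker}(\delta_2)=2N-\dim\mathrm{Ran}(\delta_2)$, and show $\delta_2$ is onto; an efficient route is to pass to the simultaneous upper-triangular form and invoke Lemma \ref{spectra}, because once $T_1,T_2$ are triangular the operator $\delta_2(x_1\oplus x_2)=-T_2x_1+T_1x_2$ acquires a block-triangular structure whose diagonal blocks are governed by the pairs $(\lambda_i,\mu_i)$, none of which is $(0,0)$ under our assumption; this makes the surjectivity and the kernel-dimension bookkeeping transparent. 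Assembling the two displayed equalities then completes the proof, and I would close by remarking that all three spectra coincide precisely because the finite-dimensional Koszul homology is controlled entirely by the joint diagonal coefficients.
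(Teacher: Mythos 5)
Your proposal is correct in outline but takes a genuinely different route from the paper on the hard inclusion. The paper never verifies exactness of the Koszul complex directly: it establishes $\sigma_{dc}\subseteq\sigma_{pt}\subseteq\sigma_T$ exactly as you do (a joint eigenvector kills exactness at the first stage), and then closes the circle by proving $\sigma_T\subseteq\sigma_{dc}$, writing $X=L_1\oplus X_2$ with respect to the flag of Lemma \ref{spectra1} and iterating the block-triangular spectral inclusion of Lemma \ref{spectra} (in a Taylor-spectrum form) $N$ times, peeling off one diagonal coefficient $(\lambda_1,\mu_1)$ at each step. You instead prove $\sigma_{pt}=\sigma_{dc}$ directly and then show that $(0,0)\notin\sigma_{pt}$ forces exactness of $0\to X\xrightarrow{\delta_1}X\oplus X\xrightarrow{\delta_2}X\to 0$, via the Euler-characteristic identity $h_0-h_1+h_2=N-2N+N=0$, which with $h_0=0$ reduces everything to surjectivity of $\delta_2$. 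Your route buys self-containedness: it avoids the Taylor-spectrum analogue of Lemma \ref{spectra}, which the paper invokes even though the lemma as stated concerns the ordinary spectrum of a single $2\times 2$ block matrix. The paper's route avoids all homological bookkeeping. Your leading-index argument for $\sigma_{pt}\subseteq\sigma_{dc}$ (take $k$ minimal with the joint eigenvector in $L_k$ and compare leading coefficients mod $L_{k-1}$) is also a genuine addition, since the paper gets that inclusion only through $\sigma_T$.

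Two steps need tightening. First, ``upper-triangular with a zero somewhere on each diagonal, hence a nontrivial common kernel'' is false as literally stated: $T_1=\mathrm{diag}(0,1)$ and $T_2=\mathrm{diag}(1,0)$ commute and are each singular, yet have no common kernel vector. For a joint diagonal coefficient the zeros sit at a \emph{common} index $k$, and even then the conclusion is true but not automatic: restrict to the invariant subspace $L_k$, intersect the generalized eigenspaces of the two restrictions, and observe that on that intersection $T_1-\lambda_k$ and $T_2-\mu_k$ are commuting nilpotents, which do admit a common kernel vector. (The paper is guilty of the same gloss, dismissing this inclusion with ``it is evident.'') Second, Lemma \ref{spectra} yields a spectral inclusion, not surjectivity of $\delta_2$; what you actually need is an induction along the flag: the last coordinate of $\delta_2(u\oplus v)=-T_2u+T_1v$ reads $-\mu_Nu_N+\lambda_Nv_N$, solvable for any target since $(\lambda_N,\mu_N)\neq(0,0)$ --- which uses the first step, because the hypothesis $(0,0)\notin\sigma_{pt}$ only excludes $(0,0)$ from $\sigma_{dc}$ via $\sigma_{dc}\subseteq\sigma_{pt}$ --- and the residual equation is the same surjectivity problem for the commuting restrictions on $L_{N-1}$. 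With these repairs your argument is complete.
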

\begin{proof}
We prove this Theorem by repeated application of Lemma
\ref{spectra} to the simultaneous upper-triangularization of Lemma
\ref{spectra1}. It is evident that each $(\lambda_i,\mu_i)$ is a
joint eigenvalue of $(T_1,T_2)$ and for each $(\lambda_i,\mu_i)$,
$Ker(T_1-\lambda_i)\cap Ker(T_2-\mu_i) \neq \emptyset$ which means
that the Koszul complex (see (\ref{Koszul})) of
$(T_1-\lambda_i,T_2-\mu_i)$ is not exact at the first stage and
consequently $(T_1-\lambda_i,T_2-\mu_i)$ is not Taylor-regular.
Therefore, $(\lambda_i,\mu_i)\in\sigma_T(T_1,T_2)$. Therefore,
\[
\sigma_{dc}(T_1,T_2)\subseteq \sigma_{pt}(T_1,T_2)\subseteq
\sigma_T(T_1,T_2).
\]
Now let $X_2$ be the subspace spanned by $x_2,\cdots,x_N$. Then
$X_2$ is $N-1$ dimensional and $X=L_1\oplus X_2$. For $i=1,2$ we
define $D_i$ on $X_2$ by the $(N-1)\times (N-1)$ matrices
\[
D_1=\begin{pmatrix}
\lambda_2 & \ast & \ast & \ast \\
0&\lambda_3 & \ast & \ast \\
\vdots & \vdots &\ddots & \vdots \\
0 & 0 & \cdots & \lambda_N
\end{pmatrix}\,,\,
D_2=\begin{pmatrix}
\mu_2 & \ast & \ast & \ast \\
0&\mu_3 & \ast & \ast \\
\vdots & \vdots &\ddots & \vdots \\
0 & 0 & \cdots & \mu_N
\end{pmatrix}\,.
\]
Then $(D_1,D_2)$ is a commuting pair as $(T_1,T_2)$ is so. Now we
apply Lemma \ref{spectra} and get $\sigma_T(T_1,T_2)\subseteq
\{(\lambda_1,\mu_1)\}\cup \sigma_T(D_1,D_2)$. Repeating this
argument $N$ times we obtain $\sigma_T(T_1,T_2)\subseteq
\sigma_{dc}(T_1,T_2)$. Hence we are done.
\end{proof}

\subsection{Preliminary results about $\mathbb E$-contractions}
By virtue of polynomial convexity of $\mathbb E$, the
condition on the Taylor joint spectrum can be avoided and the
definition of $\mathbb E$-contraction can be given only in terms
of von-Neumann's inequality as the following lemma shows.

\begin{lem} \label{simpler}
A commuting triple of bounded operators $(T_1,T_2,T_3)$ is an
$\mathbb E$-contraction if and only if $\| f (T_1,T_2,T_3) \| \leq
\| f \|_{\infty, \overline{\mathbb E}}$ for any holomorphic
polynomial $f$ in three variables.
\end{lem}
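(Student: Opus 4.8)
The forward implication is immediate and carries no content: by the very definition of an $\mathbb E$-contraction, such a triple satisfies the stated von-Neumann inequality. So the whole substance of the lemma is the converse, and the task there is to manufacture the missing spectral condition $\sigma_T(T_1,T_2,T_3)\subseteq\overline{\mathbb E}$ out of the norm inequality alone. The plan is to bridge the gap between a norm estimate and a spectral containment by combining the spectral mapping theorem for the Taylor joint spectrum with the polynomial convexity of $\overline{\mathbb E}$.

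Concretely, I would invoke two facts. First, the spectral mapping property of the Taylor joint spectrum: for any holomorphic polynomial $p$ in three variables,
\[
\sigma\big(p(T_1,T_2,T_3)\big)=\{\,p(\mu)\,:\,\mu\in\sigma_T(T_1,T_2,T_3)\,\},
\]
which is Taylor's theorem (see \cite{Taylor, Taylor1, Muller}). Second, the elementary bound $r(S)\leq\|S\|$ relating the spectral radius of a single operator $S$ to its norm. Putting these together, for an arbitrary point $\mu\in\sigma_T(T_1,T_2,T_3)$ and any holomorphic polynomial $p$ one obtains
\[
|p(\mu)|\;\leq\; r\big(p(T_1,T_2,T_3)\big)\;\leq\;\|p(T_1,T_2,T_3)\|\;\leq\;\|p\|_{\infty,\overline{\mathbb E}},
\]
where the final step is exactly the assumed von-Neumann inequality.

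The inequality displayed above says that every such $\mu$ lies in the polynomially convex hull of $\overline{\mathbb E}$. Since $\overline{\mathbb E}$ is polynomially convex (as recorded in the Introduction), this hull coincides with $\overline{\mathbb E}$ itself, so $\mu\in\overline{\mathbb E}$. As $\mu$ was an arbitrary point of the joint spectrum, this yields $\sigma_T(T_1,T_2,T_3)\subseteq\overline{\mathbb E}$; combined with the von-Neumann inequality already in hand, the pair of conditions is precisely the definition of an $\mathbb E$-contraction, and the converse is proved.

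I expect the main obstacle to be the invocation of the spectral mapping theorem for the Taylor joint spectrum, which is the genuinely nontrivial black box here; once it is available, everything else is routine. The geometric input that does the real upgrading of the estimate into a spectral containment is the polynomial convexity of $\overline{\mathbb E}$, and this is exactly the place where that hypothesis on $\mathbb E$ is essential — without it, the norm bound would only place the joint spectrum in the polynomial hull rather than in $\overline{\mathbb E}$ itself.
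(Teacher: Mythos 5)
Your proof is correct, and it is essentially the argument behind the paper's citation: the paper does not prove the lemma itself but refers to Lemma 3.3 of \cite{tirtha}, whose proof is exactly this combination of the spectral mapping theorem for the Taylor joint spectrum, the bound $r(S)\leq\|S\|$, and the polynomial convexity of $\overline{\mathbb E}$ to place $\sigma_T(T_1,T_2,T_3)$ inside $\overline{\mathbb E}$. Nothing further is needed.
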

See Lemma 3.3 of \cite{tirtha} for a proof. Let us recall that the
{\em numerical radius} of an operator $T$ on a Hilbert space
$\mathcal{H}$ is defined by
\[
\omega(T) = \sup \{|\langle Tx,x \rangle|\; : \;
\|x\|_{\mathcal{H}}= 1\}.
\]
It is well known that
\begin{eqnarray}\label{nradius}
r(T)\leq \omega(T)\leq \|T\| \textup{ and } \frac{1}{2}\|T\|\leq
\omega(T)\leq \|T\|, \end{eqnarray} where $r(T)$ is the spectral
radius of $T$. We state two basic results about numerical radius
of which the first result has a routine proof. We shall use these
two results in sequel.

\begin{lem} \label{basicnrlemma}
The numerical radius of an operator $T$ is not greater than $1$ if
and only if  Re $\beta T \leq I$ for all complex numbers $\beta$
of modulus $1$.
\end{lem}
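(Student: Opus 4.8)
The plan is to reduce both sides of the equivalence to a single scalar condition on the numerical range and then optimize over the unimodular parameter $\beta$. Writing $\textrm{Re}\,(\beta T)=\tfrac12(\beta T+\bar\beta T^*)$, the operator inequality $\textrm{Re}\,(\beta T)\leq I$ means $\langle \textrm{Re}\,(\beta T)x,x\rangle\leq \|x\|^2$ for every $x$; pairing against a fixed vector and using $\langle \textrm{Re}\,(\beta T)x,x\rangle=\textrm{Re}\,\langle \beta Tx,x\rangle=\textrm{Re}\,(\beta\langle Tx,x\rangle)$ turns this into the scalar statement $\textrm{Re}\,(\beta\langle Tx,x\rangle)\leq\|x\|^2$. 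Thus the full hypothesis ``$\textrm{Re}\,(\beta T)\leq I$ for every $\beta$ with $|\beta|=1$'' is equivalent to requiring $\textrm{Re}\,(\beta\langle Tx,x\rangle)\leq 1$ simultaneously for all unit vectors $x$ and all unimodular $\beta$.

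The key (and essentially the only nontrivial) observation is the elementary supremum formula $\sup_{|\beta|=1}\textrm{Re}\,(\beta w)=|w|$ for a fixed complex number $w$, the supremum being attained at $\beta=\bar w/|w|$ when $w\neq 0$ and trivially when $w=0$. Applying this with $w=\langle Tx,x\rangle$ collapses the family of scalar inequalities indexed by $\beta$ into the single inequality $|\langle Tx,x\rangle|\leq 1$. Quantifying over all unit vectors $x$ then yields precisely $\omega(T)=\sup_{\|x\|=1}|\langle Tx,x\rangle|\leq 1$, which is the claimed equivalence, read in either direction.

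The only bookkeeping needed is the passage between the operator inequality stated for all $x$ and its normalized form stated for unit vectors; this is handled by the homogeneity scaling $x\mapsto x/\|x\|$ and presents no difficulty. Accordingly there is no genuine obstacle: the substance of the lemma is the supremum formula for $\textrm{Re}\,(\beta w)$, and everything else is a direct dictionary translation between the operator-inequality language and the numerical-radius language. This matches the paper's own remark that the result has a routine proof.
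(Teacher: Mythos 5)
Your proof is correct: the paper itself omits the argument, remarking only that the lemma ``has a routine proof,'' and your reduction via $\langle \mathrm{Re}\,(\beta T)x,x\rangle=\mathrm{Re}\,\bigl(\beta\langle Tx,x\rangle\bigr)$ together with the identity $\sup_{|\beta|=1}\mathrm{Re}\,(\beta w)=|w|$ is exactly the standard routine proof being alluded to. No gaps; the self-adjointness of $\mathrm{Re}\,(\beta T)$, which justifies testing the operator inequality against quadratic forms, is implicitly and correctly used.
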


\begin{lem}\label{funda-properties}
Let $A_1,A_2$ be two bounded operators such that
$\omega(A_1+A_2z)\leq 1$ for all $z\in\mathbb T$. Then
$\omega(A_1+zA_2^*)\leq 1$ and $\omega(A_1^*+A_2z)\leq 1$ for all
$z\in\mathbb T$.

\end{lem}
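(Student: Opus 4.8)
The plan is to use Lemma \ref{basicnrlemma}, which reduces a numerical radius bound to a family of real-part inequalities, and then exploit the freedom to rotate the scalar parameters $z$ and $\be$ independently. So the first step is to restate the hypothesis $\omega(A_1+A_2z)\le 1$ for all $z\in\mathbb T$ in the equivalent form given by Lemma \ref{basicnrlemma}: for every $z\in\mathbb T$ and every $\be\in\mathbb T$,
\[
\textup{Re}\,\big(\be(A_1+A_2z)\big)\le I.
\]
Writing this out, the hypothesis becomes the statement that $\textup{Re}\,\big(\be A_1+\be z A_2\big)\le I$ holds simultaneously for all unimodular $\be$ and $z$.

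Next I would prove the first claim, $\omega(A_1+zA_2^*)\le 1$. Again invoking Lemma \ref{basicnrlemma}, it suffices to show $\textup{Re}\,\big(\be(A_1+zA_2^*)\big)\le I$ for all $\be,z\in\mathbb T$. The key observation is that $\textup{Re}$ is unchanged under taking adjoints, i.e. $\textup{Re}\,S=\textup{Re}\,S^*$, since $2\,\textup{Re}\,S=S+S^*$. Thus $\textup{Re}\,\big(\be A_1+\be z A_2^*\big)=\textup{Re}\,\big(\bar\be A_1^*+\overline{\be z}\,A_2\big)$. Now I would make the substitution: set $\be'=\bar\be\cdot(\text{a suitable phase})$ and $z'=\overline{z}$ or a product thereof, chosen so that the right-hand side matches the hypothesized family $\textup{Re}\,(\be' A_1+\be' z' A_2)$ for some unimodular $\be',z'$. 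Because $\be$ and $z$ range over all of $\mathbb T$ independently, these substitutions stay within the allowed range, and the hypothesis applies directly to give the bound $\le I$. The claim for $\omega(A_1^*+A_2z)$ is handled the same way, taking the adjoint of $\be(A_1^*+A_2z)$ to convert it back into an expression of the hypothesized form with the roles of $A_1,A_2$ and the phases suitably relabeled.

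The only subtlety, and the step I would be most careful with, is the bookkeeping of which phase absorbs into $\be$ and which into $z$ when one passes to adjoints; one must verify that the map $(\be,z)\mapsto(\be',z')$ really is a bijection of $\mathbb T\times\mathbb T$ onto itself so that ``for all $\be,z$'' in the hypothesis translates into ``for all $\be,z$'' in the conclusion, with no loss. Once that is checked the argument is purely formal. An entirely equivalent route, which may read more cleanly, is to use the identity $\omega(S)=\omega(S^*)$ directly: since $\omega(A_1+A_2z)=\omega\big((A_1+A_2z)^*\big)=\omega(A_1^*+\bar z A_2^*)$, replacing $z$ by $\bar z$ (which again sweeps out all of $\mathbb T$) yields one of the desired inequalities immediately, and a second application of $\omega(S)=\omega(S^*)$ together with the relabeling $z\mapsto\bar z$ yields the other. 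I expect the proof to be short; the main work is simply confirming that the rotations involved preserve the quantifier over $\mathbb T$.
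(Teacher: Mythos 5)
Your main argument is correct and is essentially the paper's own proof: the paper likewise rewrites the hypothesis as $\omega(z_1A_1+z_2A_2)\leq 1$ for all $z_1,z_2\in\mathbb T$, applies Lemma \ref{basicnrlemma} to get $\textup{Re}\,(z_1A_1+z_2A_2)\leq I$, observes that the self-adjoint expression $z_1A_1+\bar z_1A_1^*+z_2A_2+\bar z_2A_2^*$ is unchanged when $z_2A_2$ is traded for $\bar z_2A_2^*$, and then regroups the phases. Your deferred bookkeeping does close: expanding both real parts and matching all four coefficients forces $\beta'=\beta$ and $z'=\bar\beta^{\,2}\bar z$, and $(\beta,z)\mapsto(\beta',z')$ is indeed a bijection of $\mathbb T\times\mathbb T$, so the quantifiers survive. (Note the matching only works at the level of the full self-adjointified expressions, not termwise inside $\textup{Re}$, since $\bar\beta A_1^*$ can never literally equal $\beta'A_1$.)

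However, your closing ``entirely equivalent route'' is not equivalent, and as stated it fails. Applying $\omega(S)=\omega(S^*)$ to the hypothesis gives $\omega(A_1+A_2z)=\omega(A_1^*+\bar zA_2^*)$, in which \emph{both} operators acquire a star; after the relabeling $z\mapsto\bar z$ this yields only $\omega(A_1^*+zA_2^*)\leq 1$, which is neither of the two desired conclusions --- in each of those exactly one of $A_1,A_2$ is starred. A second application of $\omega(S)=\omega(S^*)$ merely undoes the first. What the identity genuinely gives is that the two conclusions are equivalent to one another, since $\omega(A_1+zA_2^*)=\omega\bigl((A_1+zA_2^*)^*\bigr)=\omega(A_1^*+\bar zA_2)$; so it can replace the paper's final remark that ``the proof of $\omega(A_1^*+A_2z)\leq 1$ is similar,'' but it cannot bridge from the hypothesis to either conclusion. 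The mixed-star inequalities are exactly the nontrivial content of the lemma, and they require the real-part argument of your main route.
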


\begin{proof}

We have that $\omega(A_1+zA_2)\leq 1$ for every $z\in\mathbb T$,
which is same as saying that $\omega(z_1A_1+z_2A_2)\leq 1$ for all
complex numbers $z_1,z_2$ of unit modulus. Thus by Lemma
\ref{basicnrlemma},
$$ (z_1A_1+z_2A_2)+(z_1A_1+z_2A_2)^*\leq 2I, $$
that is
$$ (z_1A_1+\bar{z_2}A_2^*)+(z_1A_1+\bar{z_2}A_2^*)^* \leq 2I.$$
Therefore, $z_1(A_1+zA_2^*)+\bar{z_1}(A_1+zA_2^*)^*\leq 2I$ for
all $z,z_1 \in\mathbb T$. This is same as saying that
$$ \textup{Re }z_1(A_1+zA_2^*)\leq I, \textup{ for all } z,z_1 \in\mathbb T. $$
Therefore, by Lemma \ref{basicnrlemma} again
$\omega(A_1+zA_2^*)\leq 1$ for any $z$ in $\mathbb T$. The proof
of $\omega(A_1^*+A_2z)\leq 1$ is similar.

\end{proof}

We recall from section 1, the existence-uniqueness theorem
(\cite{tirtha}, Theorem 3.5) for the fundamental operators of an
$\mathbb E$-contraction.

\begin{thm}\label{funda-exist-unique}
Let $(T_1,T_2,T_3)$ be an $\mathbb E$-contraction. Then there are
two unique operators $A_1,A_2$ in $\mathcal L(\mathcal D_{T_3})$
such that
\begin{equation}\label{basiceqn} T_1-T_2^*T_3=D_{T_3}A_1D_{T_3}\textup{ and }
T_2-T_1^*T_3=D_{T_3}A_2D_{T_3}. \end{equation} Moreover,
$\omega(A_1+zA_2)\leq 1$ for all $z\in\overline{\mathbb D}$.
\end{thm}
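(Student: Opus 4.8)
The plan is to prove the three assertions separately: uniqueness, existence of the two factorisations in (\ref{basiceqn}), and the numerical radius bound. Uniqueness is the soft part. Since $\mathbb E\subseteq\mathbb D^3$, an $\mathbb E$-contraction consists of contractions, so $D_{T_3}$ and $\mathcal D_{T_3}=\overline{\textup{Ran}}\,D_{T_3}$ are well defined; because $\textup{Ran}\,D_{T_3}$ is dense in $\mathcal D_{T_3}$, any two operators $A_1,A_1'\in\mathcal L(\mathcal D_{T_3})$ with $D_{T_3}A_1D_{T_3}=D_{T_3}A_1'D_{T_3}$ agree on the dense set $\{D_{T_3}h\}$ and hence coincide, and likewise for $A_2$.

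For existence I would first extract a \emph{fundamental inequality} from the spectral-set hypothesis. Write $X=T_1-T_2^*T_3$ and $Y=T_2-T_1^*T_3$. For each $w\in\mathbb T$ the rational function $\Psi_w(x_1,x_2,x_3)=(wx_3-x_1)/(wx_2-1)$ maps $\overline{\mathbb E}$ into $\overline{\mathbb D}$ (one of the Schur-class characterisations of the tetrablock coming from its realisation through $2\times2$ contractions), and so does its partner $\widetilde\Psi_w$ obtained by interchanging $x_1,x_2$, which is legitimate since $\overline{\mathbb E}$ is symmetric in those variables by Theorem \ref{thm1}. Feeding these into the von Neumann inequality of Lemma \ref{simpler} gives $\|\Psi_w(T)\|\le1$ and $\|\widetilde\Psi_w(T)\|\le1$; unwinding $\Psi_w(T)=(wT_3-T_1)(wT_2-I)^{-1}$ and clearing denominators converts these into
\begin{gather*}
2\,\textup{Re}(wY)\le I+T_2^*T_2-T_1^*T_1-T_3^*T_3,\\
2\,\textup{Re}(wX)\le I+T_1^*T_1-T_2^*T_2-T_3^*T_3,
\end{gather*}
for every $w\in\mathbb T$. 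Adding the two cancels the indefinite quadratic terms and leaves $\textup{Re}\big(w(X+Y)\big)\le D_{T_3}^2$. Finally I would apply the case $w=1$ to the rotated triples $(\beta_1T_1,\beta_2T_2,\beta_1\beta_2T_3)$, which are again $\mathbb E$-contractions and under which $X\mapsto\beta_1X$, $Y\mapsto\beta_2Y$ while $D_{T_3}^2$ is unchanged; this upgrades the bound to the joint inequality $\textup{Re}(\beta_1X+\beta_2Y)\le D_{T_3}^2$ for all $\beta_1,\beta_2\in\mathbb T$.

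It remains to convert the joint inequality into the factorisations and the radius bound. Integrating $\textup{Re}(\beta_1X+\beta_2Y)\le D_{T_3}^2$ in $\beta_2$ over $\mathbb T$ annihilates the $Y$-term and yields $\textup{Re}(\beta_1X)\le D_{T_3}^2$ for all $\beta_1\in\mathbb T$. A short factorisation lemma then gives $A_1$: testing the inequality on a vector $h$ with $D_{T_3}h=0$ (and on $h+tg$, $t\to0$) forces $\ker D_{T_3}\subseteq\ker X\cap\ker X^*$, so the sesquilinear form $\langle A_1D_{T_3}h,D_{T_3}g\rangle:=\langle Xh,g\rangle$ is well defined; since $|\langle Xh,h\rangle|\le\|D_{T_3}h\|^2$, the form is bounded on the diagonal, hence bounded by (\ref{nradius}), and defines $A_1\in\mathcal L(\mathcal D_{T_3})$ with $X=D_{T_3}A_1D_{T_3}$. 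The same argument produces $A_2$ with $Y=D_{T_3}A_2D_{T_3}$. Substituting back, the joint inequality becomes $\textup{Re}(\beta_1A_1+\beta_2A_2)\le I$ on $\textup{Ran}\,D_{T_3}$, hence on $\mathcal D_{T_3}$; setting $\beta_1=\beta$, $\beta_2=\beta z$ and invoking Lemma \ref{basicnrlemma} gives $\omega(A_1+zA_2)\le1$ for $z\in\mathbb T$, and because $z\mapsto\langle\textup{Re}(\beta(A_1+zA_2))\xi,\xi\rangle$ is harmonic the bound extends to all $z\in\overline{\mathbb D}$.

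The main obstacle is the first step: not merely choosing the family $\{\Psi_w,\widetilde\Psi_w\}$ but justifying that these functions may be inserted into the von Neumann inequality. They have poles on $\{x_2=\bar w\}$, which meets $\overline{\mathbb E}$ only along its boundary, so $wT_2-I$ need not be invertible when $T_2$ has spectrum on $\mathbb T$; I expect to handle this by working first with $rT_2$ for $r<1$ and letting $r\to1$. That limiting argument, together with the verification that $\Psi_w$ is genuinely bounded by $1$ on $\overline{\mathbb E}$ — where the geometry encoded in Theorem \ref{thm1} really enters — is the technical heart of the proof. Everything downstream of the fundamental inequality is routine.
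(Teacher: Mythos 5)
Your argument is essentially correct, but note first that this paper contains no proof of the statement: Theorem \ref{funda-exist-unique} is quoted verbatim from \cite{tirtha} (Theorem 3.5 there), so the comparison must be with Bhattacharyya's proof. That proof routes through the symmetrized bidisc: since $(T_1+\beta T_2,\beta T_3)$ is a $\Gamma$-contraction for every $\beta\in\mathbb T$ (cf.\ Theorem \ref{tirtha-thm2}) and $D_{\beta T_3}=D_{T_3}$, the fundamental operator $F_\beta$ of that $\Gamma$-contraction (from \cite{tirtha-sourav}) satisfies $(T_1-T_2^*T_3)+\beta(T_2-T_1^*T_3)=D_{T_3}F_\beta D_{T_3}$ with $\omega(F_\beta)\leq 1$; taking $\beta=\pm 1$ and setting $A_1=\tfrac12(F_1+F_{-1})$, $A_2=\tfrac12(F_1-F_{-1})$ gives both factorizations, and uniqueness forces $F_\beta=A_1+\beta A_2$, so the numerical radius bound comes for free. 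You instead derive the fundamental inequality $\mathrm{Re}(\beta_1X+\beta_2Y)\leq D_{T_3}^2$ directly on $\overline{\mathbb E}$, using the Abouhajar--White--Young Schur-class functions $\Psi_w$ from \cite{awy} together with the rotations $(x_1,x_2,x_3)\mapsto(\beta_1x_1,\beta_2x_2,\beta_1\beta_2x_3)$, which do preserve $\mathbb E$-contractivity since $1-(z\beta_1)x_1-(w\beta_2)x_2+(z\beta_1)(w\beta_2)x_3$ shows these maps send $\overline{\mathbb E}$ into itself; you then run the sesquilinear-form factorization lemma, which is precisely the engine inside the $\Gamma$-contraction fundamental operator theorem that \cite{tirtha} invokes. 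Your route is more self-contained (no $\Gamma$-theory), at the cost of re-proving the operator-equation lemma; the cited route is shorter given that background and yields $\omega(A_1+\beta A_2)\leq 1$ without the resubstitution step. Downstream of the fundamental inequality, your details are sound: the $h+tg$ perturbation correctly forces $\ker D_{T_3}\subseteq\ker X\cap\ker X^*$, boundedness of the form from its diagonal bound follows by polarization and (\ref{nradius}), and the harmonic (maximum-principle) extension from $\mathbb T$ to $\overline{\mathbb D}$ is exactly right.

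One detail needs repair. Your proposed regularization, replacing $T_2$ by $rT_2$ with $r<1$, is not legitimate: $(T_1,rT_2,T_3)$ need not be an $\mathbb E$-contraction, because $(x_1,x_2,x_3)\mapsto(x_1,rx_2,x_3)$ does not map $\overline{\mathbb E}$ into itself --- e.g.\ $(1,1,1)\in b\mathbb E$ but $(1,r,1)\notin\overline{\mathbb E}$, since points of $\overline{\mathbb E}$ with $|x_3|=1$ must satisfy $x_1=\bar x_2x_3$ by (\ref{dv1}). The correct regularization is already inside your framework: shrink $w$ rather than $T_2$. For $|w|=r<1$ the function $\Psi_w(x)=(wx_3-x_1)/(wx_2-1)$ is holomorphic on a neighbourhood of the closed tridisc, hence of $\overline{\mathbb E}$, and $|\Psi_w|\leq 1$ on $\overline{\mathbb E}$ because $\sup_{z\in\mathbb D}|\Psi(z,x)|\leq 1$ for $x\in\overline{\mathbb E}$ (this is where Theorem \ref{thm1}, equivalently the characterizations of \cite{awy}, enters); Lemma \ref{simpler} plus Oka--Weil (polynomial convexity of $\overline{\mathbb E}$) then gives $\|(wT_3-T_1)(wT_2-I)^{-1}\|\leq 1$, with $wT_2-I$ invertible since $\|wT_2\|\leq r<1$. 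Expanding and letting $r\to 1$ radially, the operator inequalities $2\,\mathrm{Re}(wY)\leq I+T_2^*T_2-T_1^*T_1-T_3^*T_3$ and its partner survive the limit in norm, and the rest of your proof goes through unchanged.
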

The following theorem gives a characterization of the set of
$\mathbb E$-unitaries (Theorem 5.4 in \cite{tirtha}).
\begin{thm}\label{tu}
Let $\underline N = (N_1, N_2, N_3)$ be a commuting triple of
bounded operators. Then the following are equivalent.

\begin{enumerate}

\item $\underline N$ is an $\mathbb E$-unitary,

\item $N_3$ is a unitary, $N_2$ is a contraction and $N_1 = N_2^*
N_3$,

\item $N_3$ is a unitary and $\underline N$ is an $\mathbb
E$-contraction.
\end{enumerate}
\end{thm}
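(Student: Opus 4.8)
The plan is to establish the cycle of implications $(1)\Rightarrow(3)\Rightarrow(2)\Rightarrow(1)$. For $(1)\Rightarrow(3)$, I would invoke the spectral theorem for the commuting normal triple $\underline N$: there is a joint spectral measure supported on $K:=\sigma_T(N_1,N_2,N_3)\subseteq b\mathbb E\subseteq\overline{\mathbb E}$, so that $\|f(N_1,N_2,N_3)\|=\sup_{K}|f|\le\|f\|_{\infty,\overline{\mathbb E}}$ for every holomorphic polynomial $f$; by Lemma \ref{simpler} this makes $\underline N$ an $\mathbb E$-contraction. Since every point of $b\mathbb E$ has third coordinate of modulus $1$ by \eqref{dv1}, the projection of $K$ onto the last coordinate, which is $\sigma(N_3)$, lies in $\mathbb T$; as $N_3$ is normal, this forces $N_3$ to be unitary.

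For $(3)\Rightarrow(2)$, the key observation is that $N_3$ unitary gives $D_{N_3}=(I-N_3^*N_3)^{1/2}=0$. Applying the existence–uniqueness theorem for fundamental operators (Theorem \ref{funda-exist-unique}) to the $\mathbb E$-contraction $\underline N$, the first identity in \eqref{basiceqn} reads $N_1-N_2^*N_3=D_{N_3}A_1D_{N_3}=0$, whence $N_1=N_2^*N_3$. Moreover an $\mathbb E$-contraction consists of commuting contractions, so $N_2$ is a contraction, giving $(2)$.

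The substantial implication is $(2)\Rightarrow(1)$, which I would carry out by first upgrading the algebraic hypotheses to normality. Since $N_3$ is normal and commutes with $N_2$, the Fuglede--Putnam theorem yields $N_2N_3^*=N_3^*N_2$, and taking adjoints $N_2^*N_3=N_3N_2^*$. Next, expanding the commutation $N_1N_2=N_2N_1$ with $N_1=N_2^*N_3$ and using $N_3N_2=N_2N_3$ gives $N_2^*N_2N_3=N_2N_2^*N_3$; right-multiplying by the unitary $N_3^{-1}=N_3^*$ cancels $N_3$ and leaves $N_2^*N_2=N_2N_2^*$, so $N_2$ is normal. Consequently $N_1=N_2^*N_3$ is a product of two commuting normal operators whose cross-adjoint commutators vanish (again by Fuglede--Putnam), hence $N_1$ is itself normal, and $\underline N$ is a commuting normal triple.

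It then remains to locate the joint spectrum. By the spectral theorem for a commuting normal triple there is a joint spectral measure supported on $K=\sigma_T(N_1,N_2,N_3)$, and the associated functional calculus transports the operator identities into pointwise identities of the coordinate functions on $K$: $N_3$ unitary forces $|x_3|=1$, $N_2$ a contraction forces $|x_2|\le 1$, and $N_1=N_2^*N_3$ forces $x_1=\bar x_2x_3$ for every $(x_1,x_2,x_3)\in K$. By the description \eqref{dv1} this says precisely $K\subseteq b\mathbb E$, so together with normality we conclude that $\underline N$ is an $\mathbb E$-unitary, closing the cycle. The main obstacle is this final step: justifying rigorously that the operator equation $N_1=N_2^*N_3$ descends to the scalar relation $x_1=\bar x_2x_3$ on the joint spectrum, which relies on the joint functional calculus and the spectral mapping theorem for commuting normal tuples. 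The normality established via Fuglede--Putnam and the cancellation of the invertible $N_3$ is exactly what makes this calculus available.
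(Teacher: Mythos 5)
Your proof is correct, but note that there is nothing in the paper to compare it against: the paper states this result without proof, quoting it as Theorem 5.4 of \cite{tirtha}. Your cycle $(1)\Rightarrow(3)\Rightarrow(2)\Rightarrow(1)$ is sound, and the step $(3)\Rightarrow(2)$ via the existence--uniqueness theorem for fundamental operators is a particularly economical use of the machinery the paper already imports: with $N_3$ unitary one has $D_{N_3}=0$, so the first equation of Theorem \ref{funda-exist-unique} collapses to $N_1=N_2^*N_3$, and contractivity of $N_2$ is free since an $\mathbb E$-contraction consists of contractions. Your algebra in $(2)\Rightarrow(1)$ is also right: Fuglede--Putnam gives $N_2N_3^*=N_3^*N_2$, the identity $N_2^*N_2N_3=N_2N_2^*N_3$ together with right-cancellation of the unitary $N_3$ gives normality of $N_2$, and then $N_1N_1^*=N_2^*N_2=N_2N_2^*=N_1^*N_1$ using $N_3N_3^*=I$. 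Two standard facts deserve explicit citation, since they carry the weight at both ends of your cycle: first, for a commuting tuple of normal operators the Taylor joint spectrum coincides with the support of the joint spectral measure, equivalently with the Gelfand joint spectrum of the commutative $C^*$-algebra generated by the tuple and its adjoints (commutativity of that algebra being itself a Fuglede--Putnam consequence); this is what licenses both $\|f(\underline N)\|=\sup_K|f|$ in $(1)\Rightarrow(3)$ and the descent of operator identities to the spectrum in $(2)\Rightarrow(1)$. Second, the projection property of the Taylor spectrum, which gives $\sigma(N_3)=\pi_3(K)\subseteq\mathbb T$ and hence unitarity of the normal operator $N_3$ in $(1)\Rightarrow(3)$. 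The final step you flag as the main obstacle is in fact routine once phrased through characters: for any character $\chi$ of the commutative $C^*$-algebra one has $\chi(N_2^*)=\overline{\chi(N_2)}$, so every point $(x_1,x_2,x_3)$ of the joint spectrum satisfies $x_1=\bar x_2x_3$, $|x_3|=1$ and $|x_2|\leq 1$, which is precisely membership in $b\mathbb E$ by (\ref{dv1}); no separate spectral mapping argument is needed.
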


\noindent Here is a structure theorem for the $\mathbb
E$-isometries and a proof can be found in \cite{tirtha} (see
Theorem 5.6 and Theorem 5.7 in \cite{tirtha}).
\begin{thm} \label{ti}

Let $\underline V = (V_1, V_2, V_3)$ be a commuting triple of
bounded operators. Then the following are equivalent.

\begin{enumerate}

\item $\underline V$ is an $\mathbb E$-isometry.

\item $\underline V$ is an $\mathbb E$-contraction and $V_3$ is an
    isometry.

\item $V_3$ is an isometry, $V_2$ is a contraction and $V_1=V_2^*
V_3$.

\item (\textit{Wold decomposition}) $\mathcal H$ has a
decomposition $\mathcal H=\mathcal H_1\oplus \mathcal H_2$ into
reducing subspaces of $V_1,V_2,V_3$ such that $(V_1|_{\mathcal
H_1},V_2|_{\mathcal H_1},V_3|_{\mathcal H_1})$ is an $\mathbb
E$-unitary and $(V_1|_{\mathcal H_2},V_2|_{\mathcal
H_2},V_3|_{\mathcal H_2})$ is a pure $\mathbb E$-isometry.

\end{enumerate}

\end{thm}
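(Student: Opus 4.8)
The four statements are most naturally linked in the cycle $(1)\Rightarrow(2)\Rightarrow(3)\Rightarrow(4)\Rightarrow(1)$, leaning on the already-available characterisation of $\mathbb E$-unitaries (Theorem \ref{tu}) and on the existence of fundamental operators (Theorem \ref{funda-exist-unique}). For $(1)\Rightarrow(2)$ I would use that an $\mathbb E$-isometry is by definition the restriction of an $\mathbb E$-unitary $(\tilde V_1,\tilde V_2,\tilde V_3)$ to a common invariant subspace $\mathcal H$; since for a jointly invariant subspace $p(V_1,V_2,V_3)=p(\tilde V_1,\tilde V_2,\tilde V_3)|_{\mathcal H}$ and restriction does not increase norm, Lemma \ref{simpler} shows the restriction is again an $\mathbb E$-contraction, while $V_3=\tilde V_3|_{\mathcal H}$ is the restriction of a unitary to an invariant subspace, hence an isometry. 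For $(2)\Rightarrow(3)$, an $\mathbb E$-contraction consists of commuting contractions, so $V_2$ is a contraction; and since $V_3$ is an isometry we have $D_{V_3}=(I-V_3^*V_3)^{1/2}=0$, whence $\mathcal D_{V_3}=\{0\}$ and the relations of Theorem \ref{funda-exist-unique} collapse to $V_1-V_2^*V_3=D_{V_3}A_1D_{V_3}=0$, i.e.\ $V_1=V_2^*V_3$.

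The heart of the argument is $(3)\Rightarrow(4)$. I would apply the classical Wold decomposition to the single isometry $V_3$, writing $\mathcal H=\mathcal H_1\oplus\mathcal H_2$ with $\mathcal H_1=\bigcap_{n\ge0}V_3^n\mathcal H$ the unitary part and $\mathcal H_2$ the shift part, and then show that this decomposition reduces all three operators. That $V_2\mathcal H_1\subseteq\mathcal H_1$ is immediate from $V_2V_3^n=V_3^nV_2$, which gives $V_2(\operatorname{Ran}V_3^n)\subseteq\operatorname{Ran}V_3^n$. The crucial point is $V_2^*\mathcal H_1\subseteq\mathcal H_1$: here I would use $V_2^*V_3=V_1$ from $(3)$ to write, for $h\in\mathcal H_1$ and each $n\ge1$, $V_2^*h=V_2^*V_3^n(V_3^{*n}h)=V_1V_3^{n-1}(V_3^{*n}h)=V_3^{n-1}V_1(V_3^{*n}h)\in\operatorname{Ran}(V_3^{n-1})$, so that $V_2^*h\in\mathcal H_1$. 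Combined with $V_1=V_2^*V_3$ and $V_1^*=V_3^*V_2$ this shows $\mathcal H_1$, and hence $\mathcal H_2=\mathcal H_1^{\perp}$, reduces $V_1,V_2,V_3$. On $\mathcal H_1$ the third coordinate is unitary and the triple remains an $\mathbb E$-contraction (restriction to a reducing, hence invariant, subspace), so it is an $\mathbb E$-unitary by the implication $(3)\Rightarrow(1)$ of Theorem \ref{tu}; on $\mathcal H_2$ the relations defining $(3)$ persist verbatim and $V_3$ is a pure isometry, which yields a pure $\mathbb E$-isometry.

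Finally $(4)\Rightarrow(1)$: an $\mathbb E$-unitary is trivially an $\mathbb E$-isometry (take $\mathcal K=\mathcal H$), a pure $\mathbb E$-isometry is an $\mathbb E$-isometry by definition, and an orthogonal direct sum of $\mathbb E$-isometries is an $\mathbb E$-isometry because the direct sum of their respective $\mathbb E$-unitary extensions is again normal with joint spectrum the union of the two spectra, hence an $\mathbb E$-unitary extension of the whole. The step I expect to cost the most is the reducing claim in $(3)\Rightarrow(4)$ --- specifically confirming $V_2^*\mathcal H_1\subseteq\mathcal H_1$ --- together with, in identifying the $\mathcal H_2$-summand, producing the $\mathbb E$-unitary extension of a pure $\mathbb E$-isometry (equivalently, its concrete shift model extending to the multiplication triple on the vectorial $L^2$). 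Everything else reduces to bookkeeping with the single relation $V_1=V_2^*V_3$, the vanishing of $D_{V_3}$, and the two cited theorems.
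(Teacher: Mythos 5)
The paper itself offers no proof of this theorem; it is quoted from \cite{tirtha} (Theorems 5.6 and 5.7 there), so your attempt must be judged on its own merits. Most of your cycle is sound: $(1)\Rightarrow(2)$ via restriction and Lemma \ref{simpler} is correct; $(2)\Rightarrow(3)$ via the vanishing of $D_{V_3}$ in Theorem \ref{funda-exist-unique} is correct and clean; and your key computation $V_2^*h=V_2^*V_3^nV_3^{*n}h=V_3^{n-1}V_1(V_3^{*n}h)$ for $h\in\mathcal H_1=\bigcap_n V_3^n\mathcal H$ does establish that the Wold decomposition of $V_3$ reduces all three operators. One slip on the $\mathcal H_1$ summand: under hypothesis $(3)$ alone you cannot assert the triple ``remains an $\mathbb E$-contraction'' (that is what the cycle is meant to prove), so citing the implication $(3)\Rightarrow(1)$ of Theorem \ref{tu} is circular as written; but the repair is immediate, since $\mathcal H_1$ reducing gives $(V_2|_{\mathcal H_1})^*=V_2^*|_{\mathcal H_1}$, hence $V_3|_{\mathcal H_1}$ unitary, $V_2|_{\mathcal H_1}$ a contraction and $V_1|_{\mathcal H_1}=(V_2|_{\mathcal H_1})^*(V_3|_{\mathcal H_1})$, which is exactly condition $(2)$ of Theorem \ref{tu}.

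The genuine gap is the $\mathcal H_2$ summand. Saying that ``the relations defining $(3)$ persist verbatim and $V_3$ is a pure isometry, which yields a pure $\mathbb E$-isometry'' is precisely the implication $(3)\Rightarrow(1)$ in the pure case --- the only nontrivial content of the whole theorem --- and it is not available anywhere in your cycle; you flag the needed $\mathbb E$-unitary extension as the costly step but never produce it, so as written the argument is circular. Note also that Theorem \ref{model1} cannot be invoked in its forward direction, since its hypothesis is already ``pure $\mathbb E$-isometry''; what closes the gap is its converse half. Concretely: identify $\mathcal H_2\cong H^2(\mathcal E)$ with $V_3=M_z$; commutation with $M_z$ makes $V_2$ an analytic Toeplitz operator $T_\Phi$ with $\|\Phi\|_\infty\leq 1$; then commutativity of $V_1=T_\Phi^*T_z$ with $T_z$ and with $T_\Phi$ forces the linear pencil $\Phi(z)=A_2^*+A_1z$ with $[A_1,A_2]=0$ and $[A_1^*,A_1]=[A_2^*,A_2]$ (this is the content of Theorem 5.10 of \cite{tirtha}, equivalently Theorem 3.3 of \cite{sourav1}), after which $(T_{A_1^*+A_2z},T_{A_2^*+A_1z},T_z)$ extends to the commuting triple $(M_{A_1^*+A_2z},M_{A_2^*+A_1z},M_z)$ on $L^2(\mathcal E)$, an $\mathbb E$-unitary by Theorem \ref{tu} because $M_z$ is unitary and $A_1^*+A_2z=(A_2^*+A_1z)^*z$ on $\mathbb T$. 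Be warned that the tempting shortcut --- extend $V_2$ to a contraction $\tilde V_2$ commuting with the minimal unitary extension $U$ of $V_3$ and set $\tilde V_1=\tilde V_2^*U$ --- fails, since commutativity of $\tilde V_1$ with $\tilde V_2$ forces $\tilde V_2$ to be normal, which is not automatic; so the Toeplitz--pencil analysis is genuinely needed, and indeed it is the route taken in \cite{tirtha}, whose Wold-decomposition strategy your outline otherwise matches.
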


\section{Dilation and functional model for a subclass of pure $\mathbb E$-contractions}

\noindent We make a change of notation for an $\mathbb
E$-contraction in this section. Throughout this section we shall
denote an $\mathbb E$-contraction by $(T_1,T_2,T)$.

\begin{prop}\label{dilation-extension}
Let $(Q_1,Q_2,V)$ on $\mathcal K$ be an $\mathbb E$-isometric
dilation of an $\mathbb E$-contraction $(T_1,T_2,T)$ on $\mathcal
H$. If $(Q_1,Q_2,V)$ is minimal, then $(Q_1^*,Q_2^*,V^*)$ is an
$\mathbb E$-co-isometric extension of $(T_1^*,T_2^*,T^*)$.
Conversely, the adjoint of an $\mathbb E$-co-isometric extension
of $(T_1,T_2,T)$ is an $\mathbb E$-isometric dilation of
$(T_1,T_2,V)$.
\end{prop}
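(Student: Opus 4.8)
The plan is to handle the two directions separately, the forward one being the substantial part. Write $\mathcal M=\mathcal K\ominus\mathcal H$ for the orthogonal complement; the forward direction amounts entirely to proving that $\mathcal M$ is a joint invariant subspace for $Q_1,Q_2,V$, since this is equivalent to $\mathcal H$ being invariant under $Q_1^*,Q_2^*,V^*$. The starting observation is that the power-dilation identity $P_{\mathcal H}(Q_1^{m_1}Q_2^{m_2}V^n)|_{\mathcal H}=T_1^{m_1}T_2^{m_2}T^n$, together with the commutativity of $T_1,T_2,T$, promotes by linearity to $P_{\mathcal H}\,p(Q_1,Q_2,V)|_{\mathcal H}=p(T_1,T_2,T)$ for every polynomial $p$, and that the resulting compression map is multiplicative: $P_{\mathcal H}(pq)(Q)|_{\mathcal H}=\big(P_{\mathcal H}p(Q)|_{\mathcal H}\big)\big(P_{\mathcal H}q(Q)|_{\mathcal H}\big)$.

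The key step is then a semi-invariance (Sarason-type) argument exploiting minimality. Given $n\in\mathcal M$, minimality lets me approximate $n=\lim\sum_i p_i(Q)h_i$ with $h_i\in\mathcal H$; for each generator $A\in\{Q_1,Q_2,V\}$ I would compute, using multiplicativity and continuity of $P_{\mathcal H}A$, that $P_{\mathcal H}An=\big(P_{\mathcal H}A|_{\mathcal H}\big)P_{\mathcal H}n=0$, because $P_{\mathcal H}n=0$. Hence $An\perp\mathcal H$, giving $A\mathcal M\subseteq\mathcal M$. With $\mathcal M$ invariant, $\mathcal H$ is invariant under $Q_1^*,Q_2^*,V^*$, and since the one-step identity gives $P_{\mathcal H}Q_j^*|_{\mathcal H}=T_j^*$ and $P_{\mathcal H}V^*|_{\mathcal H}=T^*$, co-invariance upgrades these compressions to genuine restrictions $Q_j^*|_{\mathcal H}=T_j^*$ and $V^*|_{\mathcal H}=T^*$. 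As $(Q_1,Q_2,V)$ is an $\mathbb E$-isometry, $(Q_1^*,Q_2^*,V^*)$ is by definition an $\mathbb E$-co-isometry, so it is an $\mathbb E$-co-isometric extension of $(T_1^*,T_2^*,T^*)$.

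For the converse no minimality is required. Let $(R_1,R_2,R_3)$ be an $\mathbb E$-co-isometric extension of $(T_1,T_2,T)$, so that $(R_1^*,R_2^*,R_3^*)$ is an $\mathbb E$-isometry, $\mathcal H$ is invariant under each $R_j$, and $R_1|_{\mathcal H}=T_1$, $R_2|_{\mathcal H}=T_2$, $R_3|_{\mathcal H}=T$. Invariance makes $R_1^{m_1}R_2^{m_2}R_3^n|_{\mathcal H}=T_1^{m_1}T_2^{m_2}T^n$ hold as honest operators on $\mathcal H$; taking adjoints and pairing against $h,h'\in\mathcal H$ gives $\langle (R_1^*)^{m_1}(R_2^*)^{m_2}(R_3^*)^n h,h'\rangle=\langle h,R_1^{m_1}R_2^{m_2}R_3^n h'\rangle=\langle (T_1^*)^{m_1}(T_2^*)^{m_2}(T^*)^n h,h'\rangle$, which is precisely $P_{\mathcal H}\big((R_1^*)^{m_1}(R_2^*)^{m_2}(R_3^*)^n\big)|_{\mathcal H}=(T_1^*)^{m_1}(T_2^*)^{m_2}(T^*)^n$. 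Combined with $(R_1^*,R_2^*,R_3^*)$ being an $\mathbb E$-isometry, this says exactly that the adjoint is an $\mathbb E$-isometric dilation of $(T_1^*,T_2^*,T^*)$.

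I expect the only real obstacle to be the forward semi-invariance step: turning the compression identity into genuine co-invariance of $\mathcal H$ is the one place where minimality is indispensable, and it is where the argument could go wrong if the multiplicativity of the compression map or the role of the approximation $n=\lim\sum_i p_i(Q)h_i$ is mishandled. Everything downstream, namely matching the restrictions to the adjoints and the entire converse, is then a formal adjoint computation.
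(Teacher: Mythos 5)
Your proof is correct and is essentially the paper's own argument in different packaging: the identity $P_{\mathcal H}An=\bigl(P_{\mathcal H}A|_{\mathcal H}\bigr)P_{\mathcal H}n$, verified on the minimal dense span $\overline{\textup{span}}\{Q_1^{m_1}Q_2^{m_2}V^n h\}$, is exactly the intertwining $T_jP_{\mathcal H}=P_{\mathcal H}Q_j$ that the paper establishes from minimality and then dualizes to obtain $Q_j^*|_{\mathcal H}=T_j^*$, so your invariance-of-$\mathcal M$ formulation and the paper's adjoint pairing are the same computation. Your converse is likewise the same formal adjoint argument that the paper dismisses as obvious.
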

\begin{proof}
We first prove that $T_1P_{\mathcal H}=P_{\mathcal H}Q_1,
T_2P_{\mathcal H}=P_{\mathcal H}Q_2$ and $TP_{\mathcal
H}=P_{\mathcal H}V$. Clearly
$$\mathcal K=\overline{\textup{span}}\{ Q_1^{m_1}Q_2^{m_2}V^n h\,:\;
h\in\mathcal H \textup{ and }m_1,m_2,n\in \mathbb N \cup \{0\}
\}.$$ Now for $h\in\mathcal H$ we have that
\begin{align*} T_1P_{\mathcal
H}(Q_1^{m_1}Q_2^{m_2}V^n h) =T_1(T_1^{m_1}T_2^{m_2}T_3^n h)
&=T_1^{m_1+1}T_2^{m_2}T_3^n h \\& =P_{\mathcal
H}(Q_1^{m_1+1}Q_2^{m_2}V^n h)\\& =P_{\mathcal
H}Q_1(Q_1^{m_1}Q_2^{m_2}V^n h).
\end{align*}
Thus, $T_1P_{\mathcal H}=P_{\mathcal H}Q_1$. Similarly we can
prove that $T_2P_{\mathcal H}=$ $P_{\mathcal H}Q_2$ and that
$T_3P_{\mathcal H}=P_{\mathcal H}V$. Also for $h\in\mathcal H$ and
$k\in\mathcal K$ we have that
\begin{align*}
\langle T_1^*h,k \rangle =\langle P_{\mathcal H}T_1^*h,k \rangle
=\langle T_1^*h,P_{\mathcal H}k \rangle  =\langle h,T_1P_{\mathcal
H}k \rangle &=\langle h,P_{\mathcal H}Q_1k \rangle \\&=\langle
Q_1^*h,k \rangle .
\end{align*}
Hence $T_1^*=Q_1^*|_{\mathcal H}$ and similarly
$T_2^*=Q_2^*|_{\mathcal H}$ and $T^*=V^*|_{\mathcal H}$.
Therefore, $(Q_1^*,Q_2^*,V^*)$ is an $\mathbb E$-co-isometric
extension of $(T_1^*,T_2^*,T^*)$.

The converse part is obvious.

\end{proof}

Let us recall from \cite{nagy}, the notion of the characteristic
function of a contraction $T$. For a contraction $T$ defined on a
Hilbert space $\mathcal H$, let $\Lambda_T$ be the set of all
complex numbers for which the operator $I-zT^*$ is invertible. For
$z\in \Lambda_T$, the characteristic function of $T$ is defined as
\begin{eqnarray}\label{e0} \Theta_T(z)=[-T+zD_{T^*}(I-zT^*)^{-1}D_T]|_{\mathcal D_T}.
\end{eqnarray} Here the operators $D_T$ and $D_{T^*}$ are the
defect operators $(I-T^*T)^{1/2}$ and $(I-TT^*)^{1/2}$
respectively. By virtue of the relation $TD_T=D_{T^*}P$ (section
I.3 of \cite{nagy}), $\Theta_T(z)$ maps $\mathcal
D_T=\overline{\textup{Ran}}D_T$ into $\mathcal
D_{T^*}=\overline{\textup{Ran}}D_{T^*}$ for every $z$ in
$\Lambda_T$.

Let us recall that a pure contraction $T$ is a contraction such
that ${T^*}^n \rightarrow 0$ strongly as $n \rightarrow \infty$.
It was shown in \cite{nagy} that every pure contraction $T$
defined on a Hilbert space $\mathcal H$ is unitarily equivalent to
the operator $\mathbb T=P_{\mathbb H_T}(M_z\otimes I)|_{\mathbb
H_T}$ on the Hilbert space $\mathbb H_T=(H^2(\mathbb D)\otimes
\mathcal D_{T^*}) \ominus M_{\Theta_T}(H^2(\mathbb D)\otimes
\mathcal D_T)$, where $M_z$ is the multiplication operator on
$H^2(\mathbb D)$ and $M_{\Theta_T}$ is the multiplication operator
from $H^2(\mathbb D)\otimes \mathcal D_T$ into $H^2(\mathbb
D)\otimes \mathcal D_{T^*}$ corresponding to the multiplier
$\Theta_T$. Here, in an analogous way, we produce a model for a
subclass of pure $\mathbb E$-contractions $(T_1,T_2,T)$.

\begin{thm} \label{dilation-theorem}
Let $(T_1,T_2,T)$ be a pure $\mathbb E$-contraction on a Hilbert
space $\mathcal{H}$ and let the fundamental operators
$A_{1*},A_{2*}$ of $(T_1^*,T_2^*,T^*)$ be commuting operators
satisfying $[A_{1*}^*,A_{1*}]=[A_{2*}^*,A_{2*}]$ Consider the
operators $Q_1,Q_2,V$ on $\mathcal{K}=H^2(\mathbb{D}) \otimes
\mathcal{D}_{T^*}$ defined by
\[
Q_1=I\otimes A_{1*}^*+M_z\otimes A_{2*},\quad Q_2=I\otimes
A_{2*}^*+M_z\otimes A_{1*} \text{ and } V=M_z\otimes I.
\]
Then $(Q_1,Q_2,V)$ is a minimal pure $\mathbb E$-isometric
dilation of $(T_1,T_2,T)$.
\end{thm}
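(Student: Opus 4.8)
The plan is to confirm in turn the three attributes in the conclusion — that $(Q_1,Q_2,V)$ is a commuting $\mathbb E$-isometry, that it dilates $(T_1,T_2,T)$, and that it is pure and minimal — using the canonical functional model of the pure contraction $T$ as the bridge between $\mathcal H$ and $\mathcal K$. \textbf{For the $\mathbb E$-isometry property}, I would first record the algebraic facts. Since $M_z^*M_z=I$, expanding $Q_2^*V$ gives $I\otimes A_{1*}^*+M_z\otimes A_{2*}=Q_1$, and $V=M_z\otimes I$ is visibly a pure isometry. One checks $[Q_1,V]=[Q_2,V]=0$ at once, while expanding $Q_1Q_2-Q_2Q_1$ leaves, after the $A_{1*}A_{2*}=A_{2*}A_{1*}$ terms cancel, exactly $M_z\otimes\big([A_{1*}^*,A_{1*}]-[A_{2*}^*,A_{2*}]\big)$; so the hypothesis $[A_{1*}^*,A_{1*}]=[A_{2*}^*,A_{2*}]$ is precisely what makes the triple commute. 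By the characterization in Theorem~\ref{ti} (condition (3)), it then remains only to see that $Q_2$ is a contraction. Now $Q_2$ is the analytic Toeplitz operator with operator symbol $A_{2*}^*+zA_{1*}$, so $\|Q_2\|=\sup_{z\in\mathbb T}\|A_{2*}^*+zA_{1*}\|$. The key point is that the same commutator hypothesis forces
\[
(A_{2*}^*+zA_{1*})^*(A_{2*}^*+zA_{1*})-(A_{2*}^*+zA_{1*})(A_{2*}^*+zA_{1*})^*=[A_{1*}^*,A_{1*}]-[A_{2*}^*,A_{2*}]=0
\]
for every $z\in\mathbb T$, i.e. the symbol is \emph{normal}, whence its norm equals its numerical radius. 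Since $A_{2*}^*+zA_{1*}=z(A_{1*}+\bar z A_{2*}^*)$, Theorem~\ref{funda-exist-unique} and Lemma~\ref{funda-properties} give $\omega(A_{1*}+\bar z A_{2*}^*)\le 1$, so $\|A_{2*}^*+zA_{1*}\|\le 1$ and $Q_2$ is a contraction. Thus $(Q_1,Q_2,V)$ is an $\mathbb E$-isometry, pure because $V$ is a unilateral shift.

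\textbf{For the dilation property}, I would embed $\mathcal H$ in $\mathcal K$ by the model isometry $\mathcal O h=\sum_{n\ge 0}z^n\otimes D_{T^*}T^{*n}h$, which has range $\mathbb H_T$ and satisfies $\mathcal O^*\mathcal O=I$ and $V^*\mathcal O=\mathcal O T^*$ (the telescoping sum uses purity of $T$). Everything then reduces to the two intertwinings $Q_1^*\mathcal O=\mathcal O T_1^*$ and $Q_2^*\mathcal O=\mathcal O T_2^*$; matching coefficients of $z^n$ turns the first into the single operator identity
\[
D_{T^*}T_1^*=A_{1*}D_{T^*}+A_{2*}^*D_{T^*}T^*\quad\text{on }\mathcal H,
\]
and symmetrically for the second. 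To prove it I multiply on the left by $D_{T^*}$ and insert the fundamental relations $D_{T^*}A_{1*}D_{T^*}=T_1^*-T_2T^*$ and $D_{T^*}A_{2*}^*D_{T^*}=T_2-TT_1^*$; the right side collapses to $T_1^*-TT_1^*T^*=(I-TT^*)T_1^*=D_{T^*}^2T_1^*$, the middle step being just $[T^*,T_1^*]=0$. As both sides of the unmultiplied identity lie in $\mathcal D_{T^*}$ and $D_{T^*}$ is injective there, the identity holds. Taking adjoints of the intertwinings gives $\mathcal O^*Q_i=T_i\mathcal O^*$ and $\mathcal O^*V=T\mathcal O^*$, and pushing $\mathcal O^*$ through yields $\mathcal O^*Q_1^{m_1}Q_2^{m_2}V^n\mathcal O=T_1^{m_1}T_2^{m_2}T^n$, which is the dilation relation.

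\textbf{For minimality}, from $\mathcal O h=1\otimes D_{T^*}h+V\,\mathcal O(T^*h)$ I read off $1\otimes D_{T^*}h=\mathcal O h-V\,\mathcal O(T^*h)\in\overline{\operatorname{span}}\{V^n\mathbb H_T\}$; since $\{D_{T^*}h:h\in\mathcal H\}$ is dense in $\mathcal D_{T^*}$, the wandering subspace $\{1\}\otimes\mathcal D_{T^*}$ of the shift $V$ lies in this span, forcing $\overline{\operatorname{span}}\{V^n\mathbb H_T\}=\mathcal K$; a fortiori the joint span involving $Q_1,Q_2$ is all of $\mathcal K$.

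The hard part will be the contraction bound for $Q_2$ and the operator identity in the dilation step, for these are exactly where the hypotheses are consumed: commutativity of $A_{1*},A_{2*}$ together with $[A_{1*}^*,A_{1*}]=[A_{2*}^*,A_{2*}]$. In particular the normality-from-the-commutator observation — which upgrades the mere numerical-radius bound on the fundamental operators to an operator-norm bound on the Toeplitz symbol — is the one genuinely non-routine idea; once it and the displayed identity are in place, the remaining verifications are bookkeeping.
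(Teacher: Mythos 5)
Your proposal is correct and follows essentially the same route as the paper: reduce via Theorem~\ref{ti}(3) to checking $Q_1=Q_2^*V$ and $\|Q_2\|\le 1$, embed $\mathcal H$ by the same map $Wh=\sum_n z^n\otimes D_{T^*}T^{*n}h$, and prove the same key identity (your $D_{T^*}T_1^*=A_{1*}D_{T^*}+A_{2*}^*D_{T^*}T^*$ is the adjoint of the paper's claim $T_1D_{T^*}=D_{T^*}A_{1*}^*+TD_{T^*}A_{2*}$, established by the same multiply-by-$D_{T^*}$-and-use-density argument). Your only departures are elaborations of steps the paper compresses — spelling out the normality of the symbol $A_{2*}^*+zA_{1*}$ behind the citation of Lemma~\ref{funda-properties} for $\|Q_2\|\le 1$, deriving the dilation product formula directly rather than invoking Proposition~\ref{dilation-extension}, and giving an explicit wandering-subspace proof of minimality where the paper simply notes that $V$ is the minimal isometric dilation of $T$.
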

\begin{proof}
The minimality is obvious if we prove that $(Q_1,Q_2,V)$ is an
$\mathbb E$-isometric dilation of $(T_1,T_2,T)$. This is because
$V$ on $\mathcal K$ is the minimal isometric dilation for $T$.
Therefore, by virtue of Lemma \ref{dilation-extension}, it
suffices if we show that $(Q_1^*,Q_2^*,V^*)$ is an $\mathbb
E$-co-isometric extension of $(T_1^*,T_2^*,T^*)$. Since the
fundamental operators $A_{1*},A_{2*}$ commute and satisfy
$[A_{1*}^*,A_{1*}]=[A_{2*}^*,A_{2*}]$, $Q_1,Q_2$ and $V$ commute.
Also it is evident that $V$ is a pure isometry. Thus invoking
Theorem \ref{ti}, we need to verify the following steps.
\begin{enumerate}
\item $Q_1=Q_2^*V, \; \| Q_2 \|\leq 1$. \item There is an isometry
$W:\mathcal H \rightarrow H^2 \otimes {\mathcal D}_{T^*}$ such
that\\ $ Q_1^*|_{W(\mathcal H)}=WT_1^*W^*|_{W(\mathcal H)},\,
Q_2^*|_{W(\mathcal H)}=WT_2^*W^*|_{W(\mathcal H)}$\\ and $
V^*|_{W(\mathcal H)}=WT^*W^*|_{W(\mathcal H)}$.
\end{enumerate}

\noindent \textbf{Step 1.} $Q_1=Q_2^*V$ is obvious. $\| Q_2 \|\leq
1$ follows from Lemma \ref{funda-properties}.\\

\noindent \textbf{Step 2.} Let us define $W$ by
\begin{align*}
W:& \mathcal{H} \rightarrow \mathcal{K} \\& h\mapsto
\sum_{n=0}^{\infty} z^n\otimes D_{T^*}{T^*}^n h.
\end{align*}

Now
\begin{align*}
\|Wh\|^2 &= \|\displaystyle
\sum_{n=0}^{\infty}{z^n\otimes D_{T^*}{T^*}^n h}\|^2 \\&= \langle
\displaystyle \sum_{n=0}^{\infty}{z^n\otimes D_{T^*}{T^*}^n
h}\;,\;\displaystyle \sum_{m=0}^{\infty}{z^m\otimes D_{T^*}{T^*}^m
h} \rangle
\\& = \displaystyle \sum_{m,n=0}^{\infty} \langle z^n,z^m \rangle
\langle D_{T^*}{T^*}^nh\;,\;D_{T^*}{T^*}^mh \rangle \\&
=\displaystyle \sum_{n=1}^{\infty}{\langle T^n D_{T^*}^2
{T^*}^nh,h \rangle}\\&= \displaystyle \sum_{n=0}^{\infty}\langle
T^n(I-TT^*){T^*}^nh,h \rangle\\& = \displaystyle
\sum_{n=0}^{\infty}\{\langle T^n{T^*}^nh,h \rangle-\langle
T^{n+1}{T^*}^{n+1}h,h \rangle\} \\&= \|h\|^2-\lim_{n \rightarrow
\infty}\|{T^*}^nh\|^2.
\end{align*}
Since $T$ is a pure contraction, $ \displaystyle \lim_{n\rightarrow
\infty}\|{T^*}^nh\|^2=0$ and hence $\|Wh\|=\|h\|.$ Therefore $W$
is an isometry.

For a basis vector $z^n\otimes \xi$ of $\mathcal{K}$ we have that
\begin{align*}
\langle W^*(z^n\otimes \xi),h \rangle  = \langle z^n \otimes \xi ,
\displaystyle \sum_{k=0}^{\infty}{z^k \otimes D_{T^*}{T^*}^kh}
\rangle  &= \langle \xi , D_{T^*}{T^*}^nh \rangle \\& = \langle
T^n D_{T^*}\xi , h \rangle.
\end{align*}
Therefore,
\begin{equation}\label{001}
W^*(z^n\otimes \xi)=T^n D_{T^*} \xi, \quad \text{ for }
n=0,1,2,3,\hdots
\end{equation}
and hence
\[
TW^*(z^n \otimes \xi)=T^{n+1} D_{T^*} \xi, \text{ for }
n=0,1,2,3,\hdots.
\]
Again by (\ref{001}),
\begin{align*}
W^*V(z^n \otimes \xi)=W^*(M_z \otimes I)(z^n \otimes \xi) =
W^*(z^{n+1} \otimes \xi) &= T^{n+1}D_{T^*}\xi \\& =TW^*(z^n\otimes
\xi).
\end{align*}
Consequently, $W^*V = TW^*$, i.e, $V^*W=WT^*$ and
hence $V^*|_{W(\mathcal H)}=WT^*W^*|_{W(\mathcal H)}$.\\

We now show that $W^*Q_1=T_1W^*$.
\begin{align*}
W^*Q_1(z^n \otimes \xi)&=W^*(I\otimes A_{1*}^*+M_z\otimes
A_{2*})(z^n \otimes \xi) \\&=W^*(z^n\otimes A_{1*}^*
\xi)+W^*(z^{n+1}\otimes A_{2*} \xi)\\&=T^nD_{T^*}A_{1*}^* \xi
+T^{n+1}D_{T^*}A_{2*} \xi .
\end{align*}
Also \begin{equation}\label{002} T_1W^*(z^n\otimes
\xi)=T_1T^nD_{T^*} \xi=T^nT_1D_{T^*} \xi . \end{equation}
\noindent \textit{Claim.} $T_1D_{T^*}=D_{T^*}A_{1*}^*+TD_{T^*}A_{2*}$.\\
\noindent \textit{Proof of Claim.} Since $A_{1*}, A_{2*}$ are the
fundamental operators of $(T_1^*,T_2^*,T^*)$, we have
\[
(D_{T^*}A_{1*}^*+TD_{T^*}A_{2*})D_{T^*}=(T_1-TT_2^*)+T(T_2^*-T_1T^*)=T_1D_{T^*}^2.
\]
Now if $G=T_1D_{T^*}-D_{T^*}A_{1*}^*-TD_{T^*}A_{2*}$, then $G$ is
defined from $\mathcal D_{T^*}$ to $\mathcal H$ and $GD_{T^*}h=0$
for every $h\in \mathcal D_{T^*}$. Hence the claim follows.\\

So from (\ref{002}) we have,
\[
T_1W^*(z^n\otimes \xi)=T^n(D_{T^*}A_{1*}^*+TD_{T^*}A_{2*}).
\]
Therefore, $W^*Q_1=T_1W^*$ and hence $Q_1^*|_{W(\mathcal
H)}=WT_1^*W^*|_{W(\mathcal H)}$. Similarly we can show that
$Q_2^*|_{W(\mathcal H)}=WT_2^*W^*|_{W(\mathcal H)}$. The proof is
now complete.

\end{proof}

In \cite{tirtha}, an explicit $\mathbb E$-isometric dilation was
constructed for an $\mathbb E$-contraction $(T_1,T_2,T_3)$ whose
fundamental operators satisfy $[A_1,A_2]=0$ and
$[A_1^*,A_1]=[A_2^*,A_2]$ (Theorem 6.1 in \cite{tirtha}). The
fundamental operators of $(T_1,T_2,T_3)$ were the key ingredients
in that construction. Such an explicit $\mathbb E$-isometric
dilation of an $\mathbb E$-contraction could be treated as an
analogue of Schaeffer's construction of isometric dilation of a
contraction. The dilation we provided in the previous theorem was
only to a pure $\mathbb E$-contraction and was different in the
sense that the fundamental operators of the adjoint
$(T_1^*,T_2^*,T_3^*)$ played the main role there. As a consequence
of the dilation theorem in \cite{tirtha}, we have the following
result.

\begin{thm}\label{sufficient1}
Let $T_1,T_2,T_3$ be commuting contractions on a Hilbert space
$\mathcal H$. Let $A_1,A_2$ be two commuting bounded operators on
$\mathcal D_{T_3}$ such that
\[
T_1-T_2^*T_3=D_{T_3}A_1D_{T_3} \text{ and }
T_2-T_1^*T_3=D_{T_3}A_2D_{T_3}.
\]
If $A_1,A_2$ satisfy $[A_1^*,A_1]=[A_2^*,A_2]$ and
$\omega(A_1+A_2z)\leq 1$, for all $z$ from the unit circle, then
$(T_1,T_2,T_3)$ is an $\mathbb E$-contraction.
\end{thm}
\begin{proof}
It is evident from Theorem 6.1 of \cite{tirtha} that such a triple
$(T_1,T_2,T_3)$ has an $\mathbb E$-isometric dilation and hence an
$\mathbb E$-unitary dilation. Therefore, $\overline{\mathbb E}$ is
a complete spectral set for $(T_1,T_2,T_3)$ and hence
$(T_1,T_2,T_3)$ is an $\mathbb E$-contraction.
\end{proof}

\begin{thm}\label{modelthm}
Let $(T_1,T_2,T)$ be a pure $\mathbb E$-contraction on a Hilbert
space $\mathcal{H}$ and let the fundamental operators
$A_{1*},A_{2*}$ of $(T_1^*,T_2^*,T^*)$ be commuting operators
satisfying $[A_{1*}^*,A_{1*}]=[A_{2*}^*, A_{2*}]$. Then
$(T_1,T_2,T)$ is unitarily equivalent to the triple $(R_1,R_2,R)$
on the Hilbert space $\mathbb H_T=(H^2(\mathbb D)\otimes \mathcal
D_{T^*})\ominus M_{\Theta_T}(H^2(\mathbb D)\otimes \mathcal D_T)$
defined by
\begin{align*}
 & R_1=P_{\mathbb H_T}(I\otimes
A_{1*}^*+M_z\otimes A_{2*})|_{\mathbb H_T},\; R_2=P_{\mathbb
H_T}(I\otimes A_{2*}^*+M_z\otimes A_{1*})|_{\mathbb H_T}\\&
\text{and } R=P_{\mathbb H_T}(M_z\otimes I)|_{\mathbb H_T}.
\end{align*}
\end{thm}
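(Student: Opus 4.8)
The plan is to obtain the model by transporting the minimal $\mathbb{E}$-isometric dilation constructed in Theorem \ref{dilation-theorem} onto the Sz.-Nagy--Foias model space of the pure contraction $T$. Theorem \ref{dilation-theorem} supplies the minimal pure $\mathbb{E}$-isometric dilation $(Q_1,Q_2,V)$ on $\mathcal{K}=H^2(\mathbb{D})\otimes\mathcal{D}_{T^*}$ together with the isometry
\[
W:\mathcal{H}\rightarrow\mathcal{K},\qquad Wh=\sum_{n=0}^{\infty}z^n\otimes D_{T^*}{T^*}^n h,
\]
which, as shown in that proof, satisfies $W^*Q_1=T_1W^*$, $W^*Q_2=T_2W^*$ and $W^*V=TW^*$, equivalently $Q_i^*W=WT_i^*$ and $V^*W=WT^*$. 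Hence $W(\mathcal{H})$ is a joint co-invariant subspace for $(Q_1,Q_2,V)$, and $(T_1,T_2,T)$ is unitarily equivalent, via $W$, to the compression of $(Q_1,Q_2,V)$ to $W(\mathcal{H})$.

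The key step is to identify the range of $W$ with the model space $\mathbb{H}_T$. Since $T$ is pure, $V=M_z\otimes I$ is its minimal isometric dilation and $W$ is precisely the canonical embedding of the Sz.-Nagy--Foias functional model; by that theory (\cite{nagy}) the orthogonal complement $\mathcal{K}\ominus W(\mathcal{H})$ equals $M_{\Theta_T}(H^2(\mathbb{D})\otimes\mathcal{D}_T)$, so that $W(\mathcal{H})=\mathbb{H}_T$ and $W:\mathcal{H}\rightarrow\mathbb{H}_T$ is unitary. This is the step carrying the analytic content, as it is where the characteristic function $\Theta_T$ and the purity of $T$ enter to pin down the complement of the range of $W$. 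With this identification the compressions fall out by a short computation: for $g=Wh$ and $g'=Wh'$ in $\mathbb{H}_T$, using $Q_1^*W=WT_1^*$ and that $W$ is isometric,
\[
\langle P_{\mathbb{H}_T}Q_1 g,g'\rangle=\langle Wh,Q_1^*Wh'\rangle=\langle Wh,WT_1^*h'\rangle=\langle WT_1 h,g'\rangle,
\]
whence $R_1=P_{\mathbb{H}_T}Q_1|_{\mathbb{H}_T}=WT_1W^*$. The same computation gives $R_2=WT_2W^*$ and $R=WTW^*$, so that $W$ implements the asserted unitary equivalence between $(T_1,T_2,T)$ and $(R_1,R_2,R)$.

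I expect the only genuine obstacle to be the identification $W(\mathcal{H})=\mathbb{H}_T$; everything else is bookkeeping with the intertwining relations already in hand from Theorem \ref{dilation-theorem}. Since $W$ is visibly the canonical model isometry for the pure contraction $T$ and $V=M_z\otimes I$ is its minimal isometric dilation, I anticipate this identification can be quoted directly from the standard functional model theory rather than reproved, which reduces the proof to invoking Theorem \ref{dilation-theorem}, citing \cite{nagy}, and performing the compression computation above.
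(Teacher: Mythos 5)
Your proposal is correct and takes essentially the same route as the paper: both reduce the theorem to the identification $W(\mathcal H)=\mathbb H_T$ for the isometry $W$ from Theorem \ref{dilation-theorem}, after which the compression formulas follow from the intertwining relations already established there. The only difference is one of packaging: where you quote the identification of the range of $W$ from the Sz.-Nagy--Foias model theory in \cite{nagy}, the paper proves it directly by showing $WW^{*}+M_{\Theta_T}M_{\Theta_T}^{*}=I_{H^2(\mathbb D)\otimes \mathcal D_{T^*}}$, evaluating both sides on kernel vectors $k_z\otimes \xi$ via Arveson's formula $W^{*}(k_z\otimes \xi)=(I-\bar z T)^{-1}D_{T^{*}}\xi$ together with the standard identity $1-\Theta_T(w)\Theta_T(z)^{*}=(1-w\bar z)D_{T^{*}}(1-wT^{*})^{-1}(1-\bar z T)^{-1}D_{T^{*}}$ --- a computation that is itself just the standard proof of the fact you cite, so your appeal to the literature is legitimate.
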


\begin{proof}
It suffices if we show that $W(\mathcal H)=\mathbb H_T$. For this,
it is enough if we can prove
\[
WW^*+M_{\Theta_T}M_{\Theta_T}^*=I_{H^2(\mathbb D)\otimes \mathcal
 D_{T^*}}.
\]
Since the vectors $z^n\otimes \xi$ forms a basis for $H^2(\mathbb
D)\otimes \mathcal D_{T^*},$ it is obvious from equation
(\ref{002}) that
\[
W^*(f\otimes \xi)=f(P)D_{P^*}\xi, \text{ for all } f \in \mathbb
C[z], \text{ and } \xi \in \mathcal D_{P^*}.
\]
It was shown in the proof of Theorem 1.2 of \cite{arveson3} by
Arveson that the operator $W^*$ satisfies the identity
\[
W^*(k_z\otimes \xi)=
 (I-\bar z T)^{-1}D_{T^*}\xi  \text{ for } z\in \mathbb D, \xi \in \mathcal
 D_{T^*},
 \]
 where $k_z(w)=(1-\langle w,z \rangle)^{-1}$.
 Therefore, for $z,w$ in $\mathbb{D}$ and $\xi,\eta$ in $\mathcal
 D_{T^*}$, we obtain
 \begin{align*}
 &\quad \langle (WW^*+M_{\Theta_T}M_{\Theta_T}^*)k_z\otimes \xi, k_w\otimes \eta
 \rangle\\&
 =\langle W^*(k_z\otimes \xi),W^*(k_w\otimes \eta) \rangle +
 \langle M_{\Theta_T}^*(k_z\otimes \xi), M_{\Theta_T}^*(k_w\otimes \eta)
 \rangle\\&
 =\langle(I-\bar{z}T)^{-1}D_{T^*}\xi,(I-\bar{w}T)^{-1}D_{T^*}\eta \rangle+
 \langle k_z\otimes \Theta_T(z)^*\xi,k_w\otimes \Theta_T(w)^*\eta
 \rangle\\&
 =\langle D_{T^*}(I-wT^*)^{-1}(I-\bar{z}T)^{-1}D_T^*\xi,\eta \rangle+
 \langle k_z,k_w \rangle\langle \Theta_T(w)\Theta_T(z)^*\xi,\eta
 \rangle\\&
 =\langle k_z\otimes \xi, k_w\otimes \eta \rangle.
 \end{align*}
 The last equality follows from the following identity (see page 244 in \cite{nagy}),
 \[
 1-\Theta_T(w)\Theta_T(z)^*=(1-w\bar{z})D_{T^*}(1-wT^*)^{-1}(1-\bar{z}T)^{-1}D_{T^*},
 \]
 where $\Theta_T$ is the characteristic function of $T$.
 Using the fact that the vectors $k_z$ form a total
 set in $H^2(\mathbb{D})$, the assertion follows.

\end{proof}

\begin{rem}
It is interesting to notice that the model space $\mathbb H_T$ and
model operator $R$ are same as the model space and model operator
of the pure contraction $T$ described in \cite{nagy}.
\end{rem}

The following theorem, which appeared in \cite{sourav1}, gives an
explicit model for pure $\mathbb E$-isometries.
\begin{thm}\label{model1}
Let $(\hat{T_1},\hat{T_2},\hat{T_3})$ be a commuting triple of
operators on a Hilbert space $\mathcal H$. If
$(\hat{T_1},\hat{T_2},\hat{T_3})$ is a pure $\mathbb E$-isometry
then there is a unitary operator $U:\mathcal H \rightarrow
H^2(\mathcal D_{{\hat{T_3}}^*})$ such that
$$ \hat{T_1}=U^*T_{\varphi}U,\quad \hat{T_2}=U^*T_{\psi}U \textup{ and }
\hat{T_3}=U^*T_zU,$$ where $\varphi(z)= A_1^*+A_2z,\,\psi(z)=
A_2^*+A_1z, \quad z\in\mathbb D$ and $A_1,A_2$ are the fundamental
operators of $(\hat{T_1}^*,\hat{T_2}^*,\hat{T_3}^*)$ such that
\begin{enumerate}
\item $[A_1,A_2]=0$ and $[A_1^*,A_1]=[A_2^*,A_2]$ \item
$\|A_1^*+A_2z\|_{\infty, \overline{\mathbb D}}\leq 1$.
\end{enumerate}
Conversely, if $A_1$ and $A_2$ are two bounded operators on a
Hilbert space $E$ satisfying the above two conditions, then
$(T_{A_1^*+A_2z},T_{A_2^*+A_1z},T_z)$ on $H^2(E)$ is a pure
$\mathbb E$-isometry.
\end{thm}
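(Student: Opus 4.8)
The plan is to prove both directions by hand, constructing the intertwining unitary $U$ directly from the Wold model of the pure isometry $\hat{T_3}$ rather than quoting Theorem \ref{modelthm}: that theorem presupposes the very commutator conditions (1) we are trying to derive, so invoking it here would be circular. First I would extract the structural relations. Part (3) of Theorem \ref{ti}, applied to the $\mathbb E$-isometry $(\hat{T_1},\hat{T_2},\hat{T_3})$, gives that $\hat{T_3}$ is an isometry, $\hat{T_2}$ is a contraction and $\hat{T_1}=\hat{T_2}^*\hat{T_3}$. The decisive point is that $\overline{\mathbb E}$ is invariant under the swap $(x_1,x_2,x_3)\mapsto(x_2,x_1,x_3)$, as is immediate from the defining inequality; hence $(\hat{T_2},\hat{T_1},\hat{T_3})$ is again an $\mathbb E$-isometry, and a second application of Theorem \ref{ti}(3) yields the symmetric relation $\hat{T_2}=\hat{T_1}^*\hat{T_3}$. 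Since $\mathbb E\subseteq\mathbb D^3$, both $\hat{T_1}$ and $\hat{T_2}$ are contractions.

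Next, because $\hat{T_3}$ is a pure isometry, $I-\hat{T_3}\hat{T_3}^*$ is the orthogonal projection onto the wandering subspace $\ker\hat{T_3}^*$, so $D_{\hat{T_3}^*}$ equals that projection and $\mathcal D_{\hat{T_3}^*}=\ker\hat{T_3}^*$. The Wold decomposition then furnishes a unitary $U:\mathcal H\to H^2(\mathcal D_{\hat{T_3}^*})$ with $U\hat{T_3}U^*=T_z$; as $\hat{T_1},\hat{T_2}$ commute with $\hat{T_3}$, the operators $U\hat{T_1}U^*$ and $U\hat{T_2}U^*$ commute with the shift and are therefore analytic Toeplitz operators $T_\varphi,T_\psi$ with $\varphi,\psi\in H^\infty(\mathcal B(\mathcal D_{\hat{T_3}^*}))$. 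The heart of the argument is to pin down these symbols. Transporting the two structural relations through $U$ gives $T_\varphi=T_\psi^*T_z$ and $T_\psi=T_\varphi^*T_z$. Writing $\varphi(z)=\sum_k\varphi_kz^k$, $\psi(z)=\sum_k\psi_kz^k$ and evaluating these identities on a constant vector $\xi\in\mathcal D_{\hat{T_3}^*}$, a comparison of Fourier coefficients forces $\varphi_k=\psi_k=0$ for $k\ge2$ together with $\varphi_0=\psi_1^*$ and $\varphi_1=\psi_0^*$; thus, setting $A_1:=\varphi_0^*$ and $A_2:=\varphi_1$, one obtains exactly $\varphi(z)=A_1^*+A_2z$ and $\psi(z)=A_2^*+A_1z$. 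A short computation using $\hat{T_1}^*=\hat{T_3}^*\hat{T_2}$, $\hat{T_2}^*=\hat{T_3}^*\hat{T_1}$ and $D_{\hat{T_3}^*}\xi=\xi$ on constants verifies that $A_1,A_2$ solve $\hat{T_1}^*-\hat{T_2}\hat{T_3}^*=D_{\hat{T_3}^*}A_1D_{\hat{T_3}^*}$ and $\hat{T_2}^*-\hat{T_1}\hat{T_3}^*=D_{\hat{T_3}^*}A_2D_{\hat{T_3}^*}$, so by the uniqueness in Theorem \ref{funda-exist-unique} they are the fundamental operators of $(\hat{T_1}^*,\hat{T_2}^*,\hat{T_3}^*)$.

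Condition (1) is then immediate: $\hat{T_1}\hat{T_2}=\hat{T_2}\hat{T_1}$ gives $T_\varphi T_\psi=T_\psi T_\varphi$, hence $\varphi\psi=\psi\varphi$ as $\mathcal B(\mathcal D_{\hat{T_3}^*})$-valued functions, and matching the coefficients of $1,z,z^2$ yields precisely $[A_1,A_2]=0$ and $[A_1^*,A_1]=[A_2^*,A_2]$. Condition (2) follows from $\|A_1^*+A_2z\|_{\infty,\overline{\mathbb D}}=\|T_\varphi\|=\|\hat{T_1}\|\le1$. This completes the forward direction, with $U$ as the required unitary.

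For the converse, given $A_1,A_2$ on $E$ satisfying (1) and (2), I would set $V_1=T_{A_1^*+A_2z}$, $V_2=T_{A_2^*+A_1z}$, $V_3=T_z$ on $H^2(E)$. Condition (1) is equivalent to the commuting of the two affine symbols, which makes $V_1,V_2$ commute (and both commute with $V_3$); a direct Fourier-coefficient computation gives $V_1=V_2^*V_3$; and $\|V_2\|=\|A_2^*+A_1z\|_{\infty,\overline{\mathbb D}}=\|A_1^*+A_2z\|_{\infty,\overline{\mathbb D}}\le1$, the two supremum norms coinciding after passing to adjoints and rotating by a unimodular scalar, so $V_2$ is a contraction. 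Theorem \ref{ti}(3) then shows $(V_1,V_2,V_3)$ is an $\mathbb E$-isometry, and it is pure because $V_3=T_z$ is a pure isometry. I expect the main obstacle to be the affine-forcing step: one must recognize that Theorem \ref{ti} by itself pins down only $\varphi$, and that the second relation $\hat{T_2}=\hat{T_1}^*\hat{T_3}$ needed to control $\psi$ becomes available only through the $x_1\leftrightarrow x_2$ symmetry of $\overline{\mathbb E}$, after which the coefficient bookkeeping identifying $A_1,A_2$ with the fundamental operators of the adjoint has to be carried out with care.
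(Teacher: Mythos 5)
Your proposal is correct and follows essentially the same route as the paper's own source: the paper does not prove Theorem \ref{model1} internally but cites Theorem 3.3 of \cite{sourav1}, whose argument is exactly your scheme --- Wold model of the pure isometry $\hat{T_3}$ on $H^2(\mathcal D_{\hat{T_3}^*})$, commutant-of-the-shift identification of $\hat{T_1},\hat{T_2}$ as analytic Toeplitz operators, the two relations $\hat{T_1}=\hat{T_2}^*\hat{T_3}$ and $\hat{T_2}=\hat{T_1}^*\hat{T_3}$ forcing affine symbols, identification of $A_1,A_2$ via the uniqueness in Theorem \ref{funda-exist-unique}, and Theorem \ref{ti}(3) for the converse. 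One cosmetic remark: your swap-symmetry detour, while valid, is unnecessary, since $\hat{T_2}=\hat{T_1}^*\hat{T_3}$ already follows algebraically from $\hat{T_1}=\hat{T_2}^*\hat{T_3}$, commutativity, and $\hat{T_3}^*\hat{T_3}=I$ (indeed $\hat{T_1}^*\hat{T_3}=\hat{T_3}^*\hat{T_2}\hat{T_3}=\hat{T_3}^*\hat{T_3}\hat{T_2}=\hat{T_2}$).
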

See Theorem 3.3 in \cite{sourav1} for a proof.

\begin{cor}\label{dv11}
Let $A_1,A_2$ be two commuting operators on a Hilbert space $E$
which satisfy $[A_1^*,A_1]=[A_2^*,A_2]$ and $\omega(A_1+A_2z)\leq
1$, for all $z\in\mathbb T$, Then $A_1,A_2$ are the fundamental
operator of an $\mathbb E$-contraction on $H^2(E).$
\end{cor}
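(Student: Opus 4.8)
The plan is to realize $A_1,A_2$ as the fundamental operators of the adjoint of the pure $\mathbb E$-isometry supplied by Theorem \ref{model1}. Set $\varphi(z)=A_1^*+A_2z$ and $\psi(z)=A_2^*+A_1z$, and consider the triple $(T_\varphi,T_\psi,T_z)$ on $H^2(E)$, where $T_z=M_z\otimes I$. To apply the converse part of Theorem \ref{model1} I must verify its three hypotheses. Two of them, $[A_1,A_2]=0$ and $[A_1^*,A_1]=[A_2^*,A_2]$, are exactly the standing assumptions; the only substantive point is the norm bound $\|A_1^*+A_2z\|_{\infty,\overline{\mathbb D}}\le 1$.

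This norm bound is the heart of the matter, since the hypothesis supplies only the weaker numerical-radius bound $\omega(A_1+A_2z)\le 1$, and in general $\omega(\cdot)\le 1$ does not force $\|\cdot\|\le 1$. The observation that repairs this is that the commutativity and commutator assumptions make $\varphi(z)$ a \emph{normal} operator for each $z\in\mathbb T$: computing $\varphi(z)\varphi(z)^*-\varphi(z)^*\varphi(z)$, the terms carrying $z$ and $\bar z$ cancel because $A_1A_2=A_2A_1$ (so that $A_1^*A_2^*=A_2^*A_1^*$), while the constant terms combine to $[A_1^*,A_1]-[A_2^*,A_2]$, which vanishes by hypothesis. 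For a normal operator numerical radius equals operator norm, so $\|A_1^*+A_2z\|=\omega(A_1^*+A_2z)$ for $z\in\mathbb T$, and Lemma \ref{funda-properties} turns the assumption $\omega(A_1+A_2z)\le 1$ into $\omega(A_1^*+A_2z)\le 1$. Finally $z\mapsto A_1^*+A_2z$ is operator-valued analytic, so $\|A_1^*+A_2z\|$ is subharmonic and attains its supremum over $\overline{\mathbb D}$ on $\mathbb T$; hence $\|A_1^*+A_2z\|_{\infty,\overline{\mathbb D}}\le 1$, as required.

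With all three hypotheses in hand, Theorem \ref{model1} shows that $(T_\varphi,T_\psi,T_z)$ is a pure $\mathbb E$-isometry on $H^2(E)$. Its adjoint $(T_\varphi^*,T_\psi^*,T_z^*)$ is then an $\mathbb E$-contraction (the adjoint of an $\mathbb E$-contraction being an $\mathbb E$-contraction), and I claim its fundamental operators are precisely $A_1,A_2$. To see this directly, note that $I-T_zT_z^*$ is the projection $P_0$ onto the constants, so $D_{T_z^*}=P_0$ and $\mathcal D_{T_z^*}\cong E$; a short computation using $M_zM_z^*=I-P_0$ gives $T_\varphi^*-T_\psi T_z^*=D_{T_z^*}A_1D_{T_z^*}$ and $T_\psi^*-T_\varphi T_z^*=D_{T_z^*}A_2D_{T_z^*}$, and the uniqueness in Theorem \ref{funda-exist-unique} then identifies $A_1,A_2$ as the fundamental operators of $(T_\varphi^*,T_\psi^*,T_z^*)$. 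Thus $A_1,A_2$ are the fundamental operators of an $\mathbb E$-contraction on $H^2(E)$, which is the assertion. I expect the normality step to be the only genuinely nontrivial one: the intertwining $T_\varphi=T_\psi^*T_z$, the commutativity of the three operators, and the defect computation are all routine consequences of the same two relations $A_1A_2=A_2A_1$ and $[A_1^*,A_1]=[A_2^*,A_2]$.
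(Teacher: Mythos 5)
Your proposal is correct and follows essentially the same route as the paper: apply the converse part of Theorem \ref{model1} to $(T_{A_1^*+A_2z},T_{A_2^*+A_1z},T_z)$ on $H^2(E)$, using the normality of $A_1^*+A_2z$ for $z\in\mathbb T$ (forced by $[A_1,A_2]=0$ and $[A_1^*,A_1]=[A_2^*,A_2]$) together with Lemma \ref{funda-properties} to upgrade the numerical-radius hypothesis to the norm bound. The only differences are that you make explicit two points the paper leaves implicit --- the maximum-principle extension of the norm bound from $\mathbb T$ to $\overline{\mathbb D}$, and the direct defect computation $T_\varphi^*-T_\psi T_z^*=D_{T_z^*}A_1D_{T_z^*}$ identifying $A_1,A_2$ via the uniqueness in Theorem \ref{funda-exist-unique} --- both of which are correct and harmless elaborations.
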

\begin{proof}
By Lemma \ref{funda-properties}, $\omega(A_1^*+A_2z)\leq 1$ and
since $A_1^*+A_2z$ is normal, we have that
$\omega(A_1^*+A_2z)=\|A_1^*+A_2z\|\leq 1$, for all $z\in\mathbb
T$. Now it is clear from the previous theorem that $A_1,A_2$ are
the fundamental operators of
$(T_{A_1^*+A_2z}^*,T_{A_2^*+A_1z}^*,T_z^*)$ on $H^2(E)$.
\end{proof}

\section{Representation of a distinguished variety in the tetrablock}

\noindent We follow here the notations and terminologies used by
Agler and M$^{\textup{c}}$Carthy in \cite{AM05}. We say that a
function $f$ is \textit{holomorphic} on a distinguished variety
$\Omega$ in $\mathbb E$, if for every point of $\Omega$, there is
an open ball $B$ in $\mathbb C^3$ containing the point and a
holomorphic function $F$ of three variables on $B$ such that
$F|_{B\cap \Omega}=f|_{B \cap \Omega}$. We shall denote by
$A(\Omega)$ the Banach algebra of functions that are holomorphic
on $\Omega$ and continuous on $\overline{\Omega}$. This is a
closed unital subalgebra of $C(\partial \Omega)$ that separates
points. The maximal ideal space of $A(\Omega)$ is
$\overline{\Omega}$.

For a finite measure $\mu$ on $\Omega$, let $H^2(\mu)$ be the
closure of polynomials in $L^2(\partial \Omega, \mu)$. If $G$ is
an open subset of a Riemann surface $S$ and $\nu$ is a finite
measure on $\overline G$, let $\mathcal A^2(\nu)$ denote the
closure in $L^2(\partial G, \nu)$ of $A(G)$. A point $\lambda$ is
said to be a \textit{bounded point evaluation} for $H^2(\mu)$ or
$\mathcal A^2(\nu)$ if evaluation at $\lambda$, \textit{a priori}
defined on a dense set of analytic functions, extends continuously
to the whole Hilbert space $H^2(\mu)$ or $\mathcal A^2(\nu)$
respectively. If $\lambda$ is a bounded point evaluation, then the
function defined by
$$ f(\lambda)=\langle f,k_{\lambda} \rangle $$ is called the
\textit{evaluation functional at} $\lambda$. The following result
is due to Agler and M$^{\textup{c}}$Carthy (see Lemma 1.1 in
\cite{AM05}).
\begin{lem}\label{basiclem1}
Let $S$ be a compact Riemann surface. Let $G\subseteq S$ be a
domain whose boundary is a finite union of piecewise smooth Jordan
curves. Then there exists a measure $\nu$ on $\partial G$ such
that every point $\lambda$ in $G$ is a bounded point evaluation
for $\mathcal A^2(\nu)$ and such that the linear span of the
evaluation functional is dense in $\mathcal A^2(\nu)$.
\end{lem}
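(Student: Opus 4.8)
The plan is to take $\nu$ to be harmonic measure on $\partial G$ and then run the standard Hardy-space machinery for a finite bordered Riemann surface. Since $S$ is compact and $\partial G$ is a finite union of piecewise-smooth Jordan curves, the closure $\overline{G}$ is a finite bordered Riemann surface; I would fix a base point $z_0\in G$ and set $\nu=\omega_{z_0}$, the harmonic measure of $G$ evaluated at $z_0$. With this choice $\mathcal{A}^2(\nu)$ is precisely the Hardy space $H^2(G)$ of the surface, and the proof splits into two steps: first that every $\lambda\in G$ is a bounded point evaluation, and then that the span of the resulting evaluation functionals is dense.

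For the first step, fix $\lambda\in G$ and $f\in A(G)$. Because $f$ is holomorphic, $|f|^2$ is subharmonic and continuous up to $\partial G$, so the sub-mean-value property against harmonic measure gives
\[
|f(\lambda)|^2\le \int_{\partial G}|f|^2\,d\omega_\lambda,
\]
where $\omega_\lambda$ is harmonic measure at $\lambda$. By Harnack's inequality the interior harmonic measures are mutually comparable, so the Radon--Nikodym derivative $d\omega_\lambda/d\omega_{z_0}$ is bounded above by a constant $C_\lambda$ (locally uniform in $\lambda$ on compact subsets of $G$), whence
\[
|f(\lambda)|^2\le C_\lambda\int_{\partial G}|f|^2\,d\nu=C_\lambda\,\|f\|_{\mathcal{A}^2(\nu)}^2.
\]
Thus evaluation at $\lambda$, defined on the dense subspace $A(G)$, extends continuously to all of $\mathcal{A}^2(\nu)$, which is exactly the assertion that $\lambda$ is a bounded point evaluation; it produces a reproducing kernel $k_\lambda\in\mathcal{A}^2(\nu)$ with $f(\lambda)=\langle f,k_\lambda\rangle$. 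The local uniformity of $C_\lambda$ further shows that for each fixed $f$ the extended map $\lambda\mapsto f(\lambda)$ is a locally uniform limit of holomorphic functions, hence itself holomorphic on $G$.

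For the second step I would show that the closed span $\mathcal{N}=\overline{\operatorname{span}}\{k_\lambda:\lambda\in G\}$ is all of $\mathcal{A}^2(\nu)$ by proving $\mathcal{N}^\perp=\{0\}$. If $f\in\mathcal{A}^2(\nu)$ satisfies $\langle f,k_\lambda\rangle=0$ for every $\lambda\in G$, then by the reproducing property its holomorphic extension $\lambda\mapsto f(\lambda)$ vanishes identically on $G$; the remaining task is to deduce that the boundary element $f$ is zero in $L^2(\nu)$. This is a Hardy-space boundary-uniqueness statement: the $L^2(\nu)$-limit $f$ of polynomials is the non-tangential boundary value of its holomorphic extension, and an $H^2$-function on a finite bordered Riemann surface whose holomorphic extension vanishes identically has zero boundary values. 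I would supply this either from the classical boundary-value theory for $H^2(G)$ on bordered surfaces, or more elementarily by lifting to the unit disc through uniformization and invoking the disc version, where it reduces to the statement that an $H^2$ function vanishing on a boundary set of positive measure is zero. Once $\mathcal{N}^\perp=\{0\}$ is established, density of the evaluation functionals follows.

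The main obstacle is precisely this injectivity: verifying that the map sending a boundary function in $\mathcal{A}^2(\nu)$ to its holomorphic extension on $G$ is one-to-one, equivalently that no nonzero element of the Hardy space has an identically vanishing analytic extension. For planar or simply connected $G$ this is immediate from disc theory, but for a multiply connected bordered surface it requires the correct definition of the Hardy space relative to harmonic measure together with the corresponding boundary-value theorem; choosing $\nu$ to be harmonic measure, rather than arclength, is exactly what keeps these boundary-value results available in clean form. The bounded-point-evaluation step, by contrast, is a routine subharmonicity-plus-Harnack estimate.
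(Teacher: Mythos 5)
Note first that the paper itself offers no proof of this lemma: it is quoted directly from Agler--M$^{\textup{c}}$Carthy (Lemma 1.1 of \cite{AM05}), and the argument underlying that cited lemma is precisely the harmonic-measure one you outline. Your proposal is correct and essentially that standard proof: the subharmonicity-plus-Harnack estimate for bounded point evaluations is sound, and in the density step you correctly isolate the one genuine analytic input --- that the $L^2(\nu)$-limit of functions in $A(G)$ agrees a.e.\ with the boundary values of its analytic extension on a finite bordered Riemann surface, so a vanishing extension forces $f=0$ in $L^2(\nu)$ --- which is classical (the Ahern--Sarason hypo-Dirichlet Hardy-space theory, or a uniformization lift to the disc) and legitimate to cite rather than reprove.
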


\begin{lem}\label{basiclem2}
Let $\Omega$ be a one-dimensional distinguished variety in
$\mathbb E$. Then there exists a measure $\mu$ on $\partial
\Omega$ such that every point in $\Omega$ is a bounded point
evaluation for $H^2(\mu)$ and such that the span of the bounded
evaluation functionals is dense in $H^2(\mu)$.
\end{lem}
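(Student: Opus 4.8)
The plan is to reduce Lemma~\ref{basiclem2} to Lemma~\ref{basiclem1} (the Agler--McCarthy result on Riemann surfaces). The essential point is that a one-dimensional distinguished variety $\Omega$ in $\mathbb E$, although it sits in $\mathbb C^3$, is intrinsically a one-dimensional complex analytic object, so after a suitable normalization it becomes a domain $G$ on a compact Riemann surface $S$ whose boundary is a finite union of piecewise smooth Jordan curves. Once that identification is in place, Lemma~\ref{basiclem1} furnishes a measure $\nu$ on $\partial G$ with the bounded point evaluation property, and transporting $\nu$ back to $\partial\Omega$ yields the desired measure $\mu$ for $H^2(\mu)$.

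First I would use the representation of $\Omega$ from Theorem~\ref{thm:DVchar}, namely
\[
\Omega=\{(x_1,x_2,x_3)\in\mathbb E : (x_1,x_2)\in\sigma_T(A_1^*+x_3A_2, A_2^*+x_3A_1)\},
\]
to exhibit $\Omega$ as (a piece of) an algebraic curve. Since $A_1,A_2$ are commuting matrices, the joint-spectrum condition is a polynomial condition in $(x_1,x_2,x_3)$ (the vanishing of the relevant determinants attached to the Koszul complex), so $\Omega$ is cut out inside $\mathbb E$ by polynomial equations. Taking the Zariski closure gives a one-dimensional affine algebraic variety $\widehat\Omega\subseteq\mathbb C^3$; desingularizing and compactifying produces a compact Riemann surface $S$ together with a holomorphic map $\pi:S\to\mathbb C^3$ whose image contains $\overline\Omega$. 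Setting $G=\pi^{-1}(\Omega)$ realizes $\Omega$ as the image of a domain $G\subseteq S$.

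Next I would verify the hypotheses of Lemma~\ref{basiclem1} for this $G$: that $G$ is a domain in the compact surface $S$ and that $\partial G$ is a finite union of piecewise smooth Jordan curves. The boundary behaviour is controlled by the distinguished-variety condition \eqref{eqn01}, namely $\overline\Omega\cap\partial\mathbb E=\overline\Omega\cap b\mathbb E$, which forces $\partial\Omega$ to lie in the Shilov boundary $b\mathbb E$ where $|x_3|=1$; pulling this back through the algebraic/real-analytic map $\pi$ shows $\partial G$ is a finite union of real-analytic (hence piecewise smooth) Jordan curves. Applying Lemma~\ref{basiclem1} then gives a measure $\nu$ on $\partial G$ so that every point of $G$ is a bounded point evaluation for $\mathcal A^2(\nu)$ with the evaluation functionals spanning a dense subspace. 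Finally I would push $\nu$ forward by $\pi$ to a measure $\mu=\pi_*\nu$ on $\partial\Omega$; since $\pi$ identifies $A(G)$ with $A(\Omega)$ and hence $\mathcal A^2(\nu)$ with $H^2(\mu)$, the bounded point evaluations and density of the evaluation functionals transfer directly.

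The main obstacle I expect is the geometric regularity step: showing that $\Omega$ really is an embedded one-dimensional variety whose closure is a well-behaved domain on a single compact Riemann surface with nice boundary. One must rule out that the algebraic set degenerates (e.g. picks up higher-dimensional components) and must handle possible singular points of $\widehat\Omega$ via normalization, checking that the distinguished-boundary hypothesis indeed yields smooth Jordan boundary curves rather than cusps or self-intersections on $\partial G$. The operator-theoretic input from Theorem~\ref{thm:DVchar} --- the commuting-matrix representation and the norm condition $\|A_1^*+A_2z\|_{\infty,\mathbb T}\le 1$ --- is exactly what guarantees that $\Omega$ exits $\mathbb E$ through $b\mathbb E$ in a controlled way, so once the reduction to Lemma~\ref{basiclem1} is justified the remainder is routine measure transport.
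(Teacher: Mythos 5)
Your route is, in outline, exactly the paper's: the paper also realizes $\Omega$ as part of an algebraic curve, takes the closure $C(\mathbb Z_{pq})$ in $\mathbb{CP}^3$ of the common zero set of the defining polynomials, desingularizes it to obtain a compact Riemann surface $S$ with a holomorphic map $\tau:S\to C(\mathbb Z_{pq})$, sets $G=\tau^{-1}(\Omega)$, applies Lemma \ref{basiclem1} to get a measure $\nu$ on $\partial G$, and defines $\mu(E)=\nu(\tau^{-1}(E))$, transporting bounded point evaluations through representing measures $k_\eta\nu$ at points $\eta\in\tau^{-1}(\lambda)$. Incidentally, your worry about cusps is unfounded: the paper notes that $\partial G$ is a finite union of curves analytic except possibly at finitely many cusps, and this is precisely covered by the \emph{piecewise smooth} Jordan curve hypothesis of Lemma \ref{basiclem1}, so no smoothing argument is needed.

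There is, however, one genuine logical flaw in your opening step: you invoke the representation of $\Omega$ from Theorem \ref{thm:DVchar} to exhibit $\Omega$ as an algebraic set. But the direction of Theorem \ref{thm:DVchar} you are using --- that every distinguished variety admits the joint-spectrum representation --- is proved in the paper \emph{using} Lemma \ref{basiclem2}: the measure $\mu$ is what produces the pure $\mathbb E$-isometry $(M_{x_1},M_{x_2},M_{x_3})$ on $H^2(\mu)$ (via Lemmas \ref{lemeval} and \ref{lempure}), from which the matrices $A_1,A_2$ are extracted. As written, your argument is circular. The repair is immediate and is what the paper does: algebraicity of $\Omega$ requires no theorem at all, since by the very definition of a variety in $\mathbb E$ one has $\Omega=V_S\cap\mathbb E$ for a set of polynomials (the paper simply takes minimal polynomials $p,q$ with $\Omega=\{p=q=0\}\cap\mathbb E$). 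Two smaller overstatements deserve a word as well: $\tau$ does not identify $A(G)$ with $A(\Omega)$ (it is only finite-to-one over the singular points), but one only needs that $f\mapsto f\circ\tau$ carries polynomials on $\Omega$ into $A(G)$, so that $H^2(\mu)$ pulls back into $\mathcal A^2(\nu)$ and the boundedness of evaluations and density of their span transfer --- this is the level of detail at which both you and the paper leave the measure-transport step. With the circularity removed, your proposal coincides with the paper's proof.
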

\begin{proof}
Agler and M$^{\textup{c}}$Carthy proved a similar result for
distinguished varieties in the bidisc (see Lemma 1.2 in
\cite{AM05}); we imitate their proof here for the tetrablock.

Let $p,q$ be minimal polynomials such that
\[
\Omega=\{(x_1,x_2,x_3)\in \mathbb E\,:\,
p(x_1,x_2,x_3)=q(x_1,x_2,x_3)=0\}.
\]
Let $\mathbb Z_{pq}$ be the intersection of the zero sets of $p$
and $q$, i.e, $\mathbb Z_{pq}=\mathbb Z_p \cap \mathbb Z_q$. Let
$C(\mathbb Z_{pq})$ be the closure of $\mathbb Z_{pq}$ in the
projective space $\mathbb{CP}^3$. Let $S$ be the desingularization
of $C(\mathbb Z_{pq})$. See, e.g., \cite{fischer}, \cite{harris}
and \cite{griffiths} for details of desingularization. Therefore,
$S$ is a compact Riemann surface and there is a holomorphic map
$\tau: S \rightarrow C(\mathbb Z_{pq})$ that is biholomorphic from
$S^{\prime}$ onto $C(\mathbb Z_{pq})^{\prime}$ and finite-to-one
from $S\setminus S^{\prime}$ onto $C(\mathbb Z_{pq})\setminus
C(\mathbb Z_{pq})^{\prime}$. Here $C(\mathbb Z_{pq})^{\prime}$ is
the set of non-singular points in $C(\mathbb Z_{pq})$ and
$S^{\prime}$ is the pre-image of $C(\mathbb Z_{pq})^{\prime}$
under $\tau$.

Let $G=\tau^{-1}(\Omega)$. Then $\partial G$ is a finite union of
disjoint curves, each of which is analytic except possibly at a
finite number of cusps and $G$ satisfies the conditions of Lemma
\ref{basiclem1}. So there exists a measure $\nu$ on $\partial G$
such that every point in $G$ is a bounded point evaluation for
$\mathcal A^2(\nu)$. Let us define our desired measure $\mu$ by
\[
\mu(E)=\nu(\tau^{-1}(E)), \text{ for a Borel subset } E \text{ of
} \partial \Omega.
\]
Clearly, if $\lambda$ is in $G$ and $\tau(\eta)=\lambda$, let
$k_{\eta}\nu$ be a representing measure for $\eta$ in $A(G)$. Then
the function $k_{\eta}\circ \tau^{-1}$ is defined $\mu$-almost
everywhere and satisfies
\begin{gather*}
\int_{\partial \Omega}p(k_{\eta}\circ \tau^{-1})d\mu=
\int_{\partial G}(p\circ \tau)k_{\eta}d\nu =p\circ \tau
(\eta)=p(\lambda) \text{ and}\\
\int_{\partial \Omega}q(k_{\eta}\circ \tau^{-1})d\mu=
\int_{\partial G}(q\circ \tau)k_{\eta}d\nu =q\circ \tau
(\eta)=q(\lambda).
\end{gather*}

\end{proof}

\begin{lem}\label{lemeval}
Let $\Omega$ be a one-dimensional distinguished variety in
$\mathbb E$, and let $\mu$ be the measure on $\partial \Omega$
given as in Lemma \textup{\ref{basiclem2}}. A point $(y_1,y_2,y_3)
\in \mathbb E$ is in $\Omega$ if and only if $(\bar y_1, \bar
y_2,\bar y_3)$ is a joint eigenvalue for $M_{x_1}^*, M_{x_2}^*$
and $M_{x_3}^*$.
\end{lem}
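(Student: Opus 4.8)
The plan is to exploit the reproducing kernel structure of $H^2(\mu)$ furnished by Lemma \ref{basiclem2}. Recall that the multiplication operators $M_{x_1}, M_{x_2}, M_{x_3}$ act on $H^2(\mu)$, and that for a bounded point evaluation $\lambda$ there is a kernel $k_\lambda$ with $f(\lambda) = \langle f, k_\lambda \rangle$ for all $f$ in $H^2(\mu)$. The key observation is the standard intertwining relation: for each coordinate function $x_i$ and each point $\lambda = (y_1,y_2,y_3) \in \Omega$, one has
\[
\langle f, M_{x_i}^* k_\lambda \rangle = \langle M_{x_i} f, k_\lambda \rangle = \langle x_i f, k_\lambda \rangle = (x_i f)(\lambda) = y_i \, f(\lambda) = \langle f, \bar y_i k_\lambda \rangle,
\]
valid for all polynomials $f$, hence for all $f \in H^2(\mu)$ by density. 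This shows $M_{x_i}^* k_\lambda = \bar y_i k_\lambda$, so $(\bar y_1, \bar y_2, \bar y_3)$ is automatically a joint eigenvalue for $(M_{x_1}^*, M_{x_2}^*, M_{x_3}^*)$ with common eigenvector $k_\lambda$ — provided $k_\lambda \neq 0$, which holds because the constant function $1$ lies in $H^2(\mu)$ and evaluates to $1$ at $\lambda$. This settles the forward (``only if'') direction.

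For the converse (``if'') direction, I would suppose $(\bar y_1, \bar y_2, \bar y_3)$ is a joint eigenvalue, say $M_{x_i}^* v = \bar y_i v$ for some nonzero $v \in H^2(\mu)$ and all $i=1,2,3$, and aim to show $(y_1,y_2,y_3) \in \Omega$. The idea is to read off the point from the eigenvector by testing against polynomials. For any polynomial $f$ one computes, using the eigenvalue relation repeatedly,
\[
\langle f \cdot g, v \rangle = \langle g, f(M_{x_1},M_{x_2},M_{x_3})^* v \rangle = \overline{f(y_1,y_2,y_3)} \, \langle g, v \rangle
\]
for all polynomials $g$; taking $g=1$ gives $\langle f, v \rangle = \overline{f(y_1,y_2,y_3)}\,\langle 1, v\rangle$. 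In particular, applying this with $f = p$ and $f = q$, the two minimal polynomials cutting out $\Omega$, and using that $p \equiv q \equiv 0$ on $\partial \Omega$ (so $M_p = M_q = 0$ as operators on $H^2(\mu)$, since $H^2(\mu) \subseteq L^2(\partial\Omega,\mu)$), one forces $\overline{p(y_1,y_2,y_3)}\,\langle 1,v\rangle = 0$ and similarly for $q$. The remaining task is to verify $\langle 1, v \rangle \neq 0$, which rules out the degenerate possibility; this follows because the evaluation functionals $k_\lambda$ span a dense set (Lemma \ref{basiclem2}), and pairing $v$ against them shows $v$ cannot be orthogonal to all constants unless $v=0$. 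With $\langle 1, v\rangle \neq 0$ in hand, one concludes $p(y_1,y_2,y_3) = q(y_1,y_2,y_3) = 0$, so $(y_1,y_2,y_3)$ lies in the zero variety; combined with the hypothesis $(y_1,y_2,y_3) \in \mathbb E$, this places the point in $\Omega$.

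The main obstacle I anticipate is the converse direction, specifically establishing that the multiplication operators $M_p, M_q$ annihilate $H^2(\mu)$ and that the eigenvector $v$ is genuinely detected by constants, i.e. $\langle 1, v \rangle \neq 0$. The first point rests on the fact that $\mu$ is supported on $\partial\Omega$, where $p$ and $q$ vanish, so multiplication by $p$ or $q$ is the zero operator on the $L^2(\mu)$-closure of polynomials; this is where the precise construction of $\mu$ in Lemma \ref{basiclem2} is essential. The second point requires care because a priori a joint eigenvector need not pair nontrivially with the constant function. I would handle this by expanding $v$ in terms of the dense span of kernel functions $\{k_\lambda\}$ and using that each $k_\lambda$ is a joint eigenvector for a \emph{distinct} eigenvalue $(\bar y_1,\bar y_2,\bar y_3)$ (distinct points of $\Omega$ giving distinct eigenvalue triples via the third coordinate, since $\Omega$ is one-dimensional and the fibering over $x_3$ is generically finite); eigenvectors for distinct eigenvalues are linearly independent, which pins down $v$ as (a multiple of) a single $k_\lambda$ and recovers the point $\lambda$ directly. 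This reduction to the already-understood forward direction is the cleanest route and avoids any delicate $\langle 1,v\rangle$ nonvanishing argument in full generality.
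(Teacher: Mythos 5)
Your forward direction coincides with the paper's: the kernel identity $M_{x_i}^*k_\lambda=\bar y_i k_\lambda$ is exactly the argument given there. Your converse, however, is a genuinely different route. The paper never touches the defining polynomials: from $M_{x_i}^*v=\bar y_i v$ it extracts $f(y_1,y_2,y_3)=\langle v, M_f^*v\rangle$ for every polynomial $f$, hence $|f(y_1,y_2,y_3)|\leq \|M_f\|=\sup_{\Omega}|f|$, and concludes that $(y_1,y_2,y_3)$ lies in the polynomial convex hull of $\Omega$ relative to $\mathbb E$, which is $\Omega$. Your route --- pairing the eigenvector against the minimal polynomials $p,q$ of Lemma \ref{basiclem2}, using that $p$ and $q$ vanish on $\overline{\Omega}\supseteq\partial\Omega\supseteq\operatorname{supp}\mu$, so that $p=q=0$ as elements of $H^2(\mu)$ --- is more hands-on, and it buys something: it replaces the paper's unargued polynomial-hull assertion with the concrete description $\Omega=\{p=q=0\}\cap\mathbb E$, which the paper itself assumes when proving Lemma \ref{basiclem2}. (A cosmetic slip: your displayed identity should read $\langle fg,v\rangle=f(y_1,y_2,y_3)\langle g,v\rangle$, with no conjugate, since $M_f^*v=\overline{f(y_1,y_2,y_3)}\,v$; this does not affect the conclusion $p(y_1,y_2,y_3)\langle 1,v\rangle=0$.)

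The one genuine defect is the fallback you settle on for $\langle 1,v\rangle\neq 0$. A vector in the \emph{closed} span of the kernel functions is not a finite linear combination of them, so linear independence of eigenvectors with distinct eigenvalues pins down nothing: in infinite dimensions it does not follow that $v$ is a multiple of a single $k_\lambda$. (Compare $H^2(\mathbb D)$: every vector lies in the closed span of the kernels $k_w$, yet a generic vector is not an eigenvector of $M_z^*$ at all.) Fortunately you do not need this machinery, because your own identity already does the job: $\langle f,v\rangle=f(y_1,y_2,y_3)\langle 1,v\rangle$ holds for all polynomials $f$, so if $\langle 1,v\rangle=0$ then $v$ is orthogonal to every polynomial, and polynomials are dense in $H^2(\mu)$ by its very definition, forcing $v=0$ and contradicting that $v$ is an eigenvector. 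With that one-line repair in place of the linear-independence argument, your converse is complete and stands as a valid alternative to the paper's polynomial-convexity argument.
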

\begin{proof}
It is a well known fact in the theory of reproducing kernel
Hilbert spaces that $M_f^* k_x = \overline{f(x)} k_x$ for every
multiplier $f$ and every kernel function $k_x$; in particular
every point $(\bar y_1, \bar y_2,\bar y_3) \in \Omega$ is a joint
eigenvalue for $M_{y_1}^*, M_{y_2}^*$ and $M_{y_3}^*$.

Conversely, if $(\bar y_1, \bar y_2,\bar y_3)$ is a joint
eigenvalue and $v$ is a unit eigenvector, then $f(y_1,y_2,y_3) =
\langle  v, M_f^* v\rangle$ for every polynomial $f$. Therefore,
\[
|f(y_1,y_2,y_3)| \leq \|M_f\| = \sup_{(x_1,x_2,x_3) \in \Omega}
|f(x_1,x_2,x_3)|.
\]
So $(y_1,y_2,y_3)$ is in the polynomial convex
hull of $\Omega$ (relative to $\mathbb E$), which is $\Omega$.
\end{proof}

\begin{lem}\label{lempure}
Let $\Omega$ be a one-dimensional distinguished variety in
$\mathbb E$, and let $\mu$ be the measure on $\partial \Omega$
given as in Lemma \textup{\ref{basiclem2}}. The multiplication
operator triple $(M_{x_1},M_{x_2},M_{x_3})$ on $H^2(\mu)$, defined
as multiplication by the co-ordinate functions, is a pure $\mathbb
E$-isometry.
\end{lem}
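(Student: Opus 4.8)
The plan is to verify the three equivalent conditions of Theorem \ref{ti}(3) together with purity of $M_{x_3}$. The key observation is that $\mu$ is supported on $\partial\Omega=\overline{\Omega}\cap b\mathbb E$, so by the description (\ref{dv1}) of the distinguished boundary the relations $|x_3|=1$, $|x_2|\leq 1$ and $x_1=\bar x_2 x_3$ all hold $\mu$-almost everywhere. Since $\partial\Omega$ is compact and the coordinate functions are bounded there, $M_{x_1},M_{x_2},M_{x_3}$ are bounded on $H^2(\mu)$, and they commute because multiplication operators by functions always do.

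First I would check the algebraic conditions. Since $|x_3|=1$ $\mu$-a.e., for $f\in H^2(\mu)$ one has $\|M_{x_3}f\|^2=\int|x_3|^2|f|^2\,d\mu=\|f\|^2$, so $M_{x_3}$ is an isometry; likewise $|x_2|\leq 1$ gives $\|M_{x_2}f\|\leq\|f\|$, so $M_{x_2}$ is a contraction. For the identity $M_{x_1}=M_{x_2}^*M_{x_3}$ I would use that for $g\in H^2(\mu)$ the adjoint acts as $M_{x_2}^*g=P_{H^2(\mu)}(\bar x_2 g)$, where $P_{H^2(\mu)}$ is the orthogonal projection of $L^2(\partial\Omega,\mu)$ onto $H^2(\mu)$. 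Then for $f\in H^2(\mu)$,
\[
M_{x_2}^*M_{x_3}f=P_{H^2(\mu)}(\bar x_2 x_3 f)=P_{H^2(\mu)}(x_1 f)=x_1 f=M_{x_1}f,
\]
the second equality because $\bar x_2 x_3=x_1$ $\mu$-a.e., and the third because $x_1 f\in H^2(\mu)$ already, as $x_1$ is a coordinate polynomial. By Theorem \ref{ti}, $(M_{x_1},M_{x_2},M_{x_3})$ is then an $\mathbb E$-isometry.

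It remains to show $M_{x_3}$ is a pure isometry, i.e., ${M_{x_3}^*}^n\to 0$ strongly. Here I would exploit the bounded point evaluations supplied by Lemma \ref{basiclem2}. By Lemma \ref{lemeval}, for each $\lambda=(y_1,y_2,y_3)\in\Omega$ the kernel function $k_\lambda$ is a joint eigenvector, so $M_{x_3}^*k_\lambda=\bar y_3 k_\lambda$ and hence ${M_{x_3}^*}^n k_\lambda=\bar y_3^{\,n} k_\lambda$. Since $\lambda\in\Omega\subseteq\mathbb E$ forces $|y_3|<1$, we get ${M_{x_3}^*}^n k_\lambda\to 0$, and the same holds on the (dense) linear span of such kernel functions. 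Because $M_{x_3}$ is an isometry, every ${M_{x_3}^*}^n$ has norm at most $1$, so a standard $\varepsilon/3$ argument extends the strong convergence to $0$ from the dense span to all of $H^2(\mu)$. This shows $M_{x_3}$ is pure, completing the proof that $(M_{x_1},M_{x_2},M_{x_3})$ is a pure $\mathbb E$-isometry.

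The main obstacle is the purity step: the three algebraic identities follow immediately from the boundary relations on $b\mathbb E$, but purity requires genuinely using the analytic structure of $H^2(\mu)$, namely that $\Omega$ sits strictly inside $\mathbb E$ where $|x_3|<1$ while $\mu$ lives on the boundary where $|x_3|=1$. The bounded point evaluations of Lemma \ref{basiclem2} are precisely what let me convert this geometric fact into the strong-operator statement ${M_{x_3}^*}^n\to 0$.
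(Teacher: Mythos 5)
Your proof is correct, and its purity half is the same as the paper's: both use the kernel functions supplied by Lemma \ref{basiclem2}, the eigenvalue relation $M_{x_3}^*k_{\bar y}=\bar y_3 k_{\bar y}$ with $|y_3|<1$ on $\Omega$, and density of the evaluation functionals (the paper leaves the uniform-boundedness/$\varepsilon/3$ step implicit, which you rightly spell out since $\|{M_{x_3}^*}^n\|\le 1$). The $\mathbb E$-isometry half, however, takes a genuinely different route. The paper never touches the identity $M_{x_1}=M_{x_2}^*M_{x_3}$: it passes to the triple $(\widehat{M_{x_1}},\widehat{M_{x_2}},\widehat{M_{x_3}})$ on $L^2(\partial\Omega,\mu)$, notes these are commuting normal operators whose joint spectrum lies in $\partial\Omega\subseteq b\mathbb E$, so the $L^2$ triple is an $\mathbb E$-unitary by definition, and then $(M_{x_1},M_{x_2},M_{x_3})$ is an $\mathbb E$-isometry directly from the definition of $\mathbb E$-isometry as the restriction of an $\mathbb E$-unitary to a common invariant subspace. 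You instead verify condition (3) of Theorem \ref{ti} from the $\mu$-a.e.\ relations $|x_3|=1$, $|x_2|\le 1$, $x_1=\bar x_2x_3$ on $b\mathbb E$, using the projection formula $M_{x_2}^*g=P_{H^2(\mu)}(\bar x_2 g)$ and the fact that $x_1f$ already lies in $H^2(\mu)$; this is valid (commutativity holds, as you note, since the operators are restrictions of commuting multiplications to the common invariant subspace), but it imports the nontrivial implication $(3)\Rightarrow(1)$ of Theorem \ref{ti} from \cite{tirtha}. The trade-off: the paper's argument exhibits the $\mathbb E$-unitary extension explicitly and costs only the standard fact that the joint spectrum of the multiplication triple on $L^2(\mu)$ is contained in the support of $\mu$, whereas your argument avoids joint-spectrum considerations entirely in favor of elementary a.e.\ identities, at the price of leaning on the Wold-type structure theorem for $\mathbb E$-isometries.
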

\begin{proof}
Let us consider the pair of operators
$(\widehat{M_{x_1}},\widehat{M_{x_2}},\widehat{M_{x_3}})$,
multiplication by co-ordinate functions, on $L^2(\partial \Omega,
\mu)$. They are commuting normal operators and the joint spectrum
$\mu(\widehat{M_{x_1}},\widehat{M_{x_2}},\widehat{M_{x_3}})$ is
contained in $\partial \Omega \subseteq b\mathbb E$. Therefore,
$(\widehat{M_{x_1}},\widehat{M_{x_2}},\widehat{M_{x_3}})$ is an
$\mathbb E$-unitary and $(M_{x_1},M_{x_2},M_{x_3})$, being the
restriction of
$(\widehat{M_{x_1}},\widehat{M_{x_2}},\widehat{M_{x_3}})$ to the
common invariant subspace $H^2(\mu)$, is an $\mathbb E$-isometry.
By a standard computation, for every $\overline y=(y_1,y_2,y_3)
\in \Omega$, the kernel function $k_{\bar y}$ is an eigenfunction
of $M_{x_3}^*$ corresponding to the eigenvalue $\overline{y_3}$.
Therefore,
\[
(M_{x_3}^*)^nk_{\overline y}=\overline{ y_3}^nk_{\overline y}
\rightarrow 0 \; \textup{ as } n \rightarrow \infty,
\]
because $|y_3|<1$ by Theorem \ref{thm1}. Since the evaluation
functionals $k_{\overline y}$ are dense in $H^2(\mu)$, this shows
that $M_{x_3}$ is pure. Hence $M_{x_3}$ is a pure isometry and
consequently $(M_{x_1},M_{x_2},M_{x_3})$ is a pure $\mathbb
E$-isometry on $H^2(\mu)$.
\end{proof}

We now present the main result of this section, the theorem that
gives a representation of a distinguished variety in $\mathbb E$
in terms of the natural coordinates of $\mathbb E$.

\begin{thm}\label{thm:DVchar}
Let
\begin{equation}\label{eq:W} \Omega = \{(x_1,x_2,x_3) \in
\mathbb E \,:\, (x_1,x_2)\in \sigma_T(A_1^*+x_3A_2,
A_2^*+x_3A_1)\},
\end{equation}
where $A_1,A_2$ are commuting square matrices of same order such
that
\begin{enumerate}
\item $[A_1^*,A_1]=[A_2^*,A_2]$ \item $\|A_1^*+A_2z\|_{\infty,
\mathbb T}<1$.
\end{enumerate}
Then $\Omega$ is a one-dimensional distinguished variety in
$\mathbb E$. Conversely, every distinguished variety in $\mathbb
E$ is one-dimensional and can be represented as (\ref{eq:W}) for
two commuting square matrices $A_1,A_2$ of same order, such that
\begin{enumerate}
\item $[A_1^*,A_1]=[A_2^*,A_2]$ \item $\|A_1^*+A_2z\|_{\infty,
\mathbb T}\leq 1$.
\end{enumerate}
\end{thm}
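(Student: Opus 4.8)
The plan is to prove the two implications separately, handling the forward direction by a direct spectral computation and the converse by the functional-model machinery of Lemmas \ref{basiclem2}--\ref{lempure} together with Theorem \ref{model1}. For the forward direction the first observation is that the hypotheses make $A_1^*+zA_2$ \emph{normal} for every $z\in\mathbb T$: expanding $(A_1^*+zA_2)(A_1^*+zA_2)^*-(A_1^*+zA_2)^*(A_1^*+zA_2)$, the off-diagonal terms carry the factors $[A_2,A_1]$ and $[A_1^*,A_2^*]$, which vanish since $A_1,A_2$ commute, while the remaining terms give $[A_1^*,A_1]-[A_2^*,A_2]=0$. The same computation shows $A_1^*+x_3A_2$ and $A_2^*+x_3A_1$ commute for every $x_3$, so by the Taylor-spectrum theorem of Section 2 their joint spectrum is the finite set of joint eigenvalues. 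I would then present $\Omega$ as a variety by noting that $(x_1,x_2)$ is a joint eigenvalue exactly when the stacked matrix $[\,A_1^*+x_3A_2-x_1I\,;\,A_2^*+x_3A_1-x_2I\,]$ has nontrivial kernel, i.e.\ all its $N\times N$ minors vanish; these are polynomial equations, and projection to the $x_3$-coordinate exhibits $\Omega$ as having finite, nonempty fibres over $\mathbb D$, hence one-dimensional.

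The geometric content comes from one estimate. If $(x_1,x_2)$ is a joint eigenvalue with unit eigenvector $v$, pairing the two eigenvalue equations against $v$ gives $x_1=\beta_1+\overline{\beta_2}x_3$, $x_2=\beta_2+\overline{\beta_1}x_3$ with $\beta_1=\langle A_1^*v,v\rangle$, $\beta_2=\langle A_2^*v,v\rangle$; these are the coordinates of Theorem \ref{thm1}. Since $|\beta_2|=|\langle A_2v,v\rangle|$, aligning phases by unimodular $\lambda_1,\lambda_2$ yields $|\beta_1|+|\beta_2|=\langle(\lambda_1A_1^*+\lambda_2A_2)v,v\rangle\le\omega(A_1^*+\nu A_2)$ for some $\nu\in\mathbb T$, and as $A_1^*+\nu A_2$ is normal this numerical radius equals its norm, so $|\beta_1|+|\beta_2|\le\|A_1^*+A_2z\|_{\infty,\mathbb T}<1$. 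Hence every point of $\overline\Omega$ with $|x_3|<1$ has $|\beta_1|+|\beta_2|<1$ and therefore lies in $\mathbb E$, never on the face $\{|x_3|<1,\ |\beta_1|+|\beta_2|=1\}$ of $\partial\mathbb E$, while any limit point with $|x_3|=1$ has $|\beta_1|+|\beta_2|<1\le1$ and so lies in $b\mathbb E$. This is exactly $\overline\Omega\cap\partial\mathbb E=\overline\Omega\cap b\mathbb E$. The strictness of the norm bound is used only at this last step, to exclude interior-$x_3$ boundary points, which is why $\|\cdot\|_{\infty,\mathbb T}=1$ is genuinely borderline.

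For the converse, I would first rule out two dimensions by slicing: for fixed $x_3^0\in\mathbb D$ the set $\mathbb E\cap\{x_3=x_3^0\}$ is, by Theorem \ref{thm1}, a bounded domain in $\mathbb C^2$ whose boundary lies in $\partial\mathbb E\setminus b\mathbb E$, so a positive-dimensional slice of $\Omega$ would force $\overline\Omega$ to meet $\partial\mathbb E$ away from $b\mathbb E$, contradicting distinguishedness; thus $\dim\Omega\le1$, and it is exactly one since a distinguished variety must reach $b\mathbb E$. With $\Omega$ one-dimensional, Lemma \ref{basiclem2} provides $H^2(\mu)$ and Lemma \ref{lempure} shows $(M_{x_1},M_{x_2},M_{x_3})$ is a pure $\mathbb E$-isometry. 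Because $\Omega$ exits only through $\{|x_3|=1\}$, the coordinate map $x_3\colon\overline\Omega\to\overline{\mathbb D}$ is proper and finite-to-one, so $M_{x_3}$ is a pure isometry of finite multiplicity; consequently the fundamental operators $A_1,A_2$ of $(M_{x_1}^*,M_{x_2}^*,M_{x_3}^*)$ furnished by Theorem \ref{model1} act on the finite-dimensional space $\mathcal D_{M_{x_3}^*}$, i.e.\ they are commuting matrices with $[A_1^*,A_1]=[A_2^*,A_2]$ and (restricting to $\mathbb T$) $\|A_1^*+A_2z\|_{\infty,\mathbb T}\le1$.

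It remains to identify $\Omega$ with (\ref{eq:W}). By Lemma \ref{lemeval}, $(y_1,y_2,y_3)\in\Omega$ iff $(\overline{y_1},\overline{y_2},\overline{y_3})$ is a joint eigenvalue of $(M_{x_1}^*,M_{x_2}^*,M_{x_3}^*)$, which under the unitary equivalence of Theorem \ref{model1} is a joint eigenvalue of $(T_\varphi^*,T_\psi^*,T_z^*)$ with $\varphi(z)=A_1^*+A_2z$, $\psi(z)=A_2^*+A_1z$. As $T_z^*k_{y_3}=\overline{y_3}k_{y_3}$, any common eigenvector has the form $k_{y_3}\otimes v$, and $M_\varphi^*(k_{y_3}\otimes v)=k_{y_3}\otimes\varphi(y_3)^*v$ reduces the equations to $(A_1^*+y_3A_2)^*v=\overline{y_1}v$ and $(A_2^*+y_3A_1)^*v=\overline{y_2}v$; conjugating a simultaneous triangularization shows the joint eigenvalues of the adjoint pair are the conjugates of those of $(A_1^*+y_3A_2,A_2^*+y_3A_1)$, so this is equivalent to $(y_1,y_2)\in\sigma_T(A_1^*+y_3A_2,A_2^*+y_3A_1)$, giving (\ref{eq:W}). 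I expect the main obstacle to lie in the converse: verifying that $M_{x_3}$ has \emph{finite} multiplicity, so that the model operators are honest matrices, and threading the joint-eigenvalue condition of Lemma \ref{lemeval} cleanly through the unitary equivalence. The forward estimate, by contrast, is a short computation once the normality of $A_1^*+zA_2$ is isolated.
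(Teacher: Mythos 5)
Your overall architecture coincides with the paper's: the forward direction via normality of $A_1^*+zA_2$ and pairing the eigenvalue equations against a unit joint eigenvector, and the converse via slicing at fixed $x_3$ to exclude dimension two, then the measure $\mu$ of Lemma \ref{basiclem2}, the pure $\mathbb E$-isometry $(M_{x_1},M_{x_2},M_{x_3})$ of Lemma \ref{lempure}, the model of Theorem \ref{model1}, and Lemma \ref{lemeval} to read off the representation. Two of your local variations are fine and even slightly cleaner than the paper's: your phase-alignment estimate $|\beta_1|+|\beta_2|\le\omega(A_1^*+\nu A_2)=\|A_1^*+\nu A_2\|<1$ bypasses the paper's detour through Lemma \ref{funda-properties} and the passage to $\omega(z_1A_1^*+z_2A_2^*)$, and your description of $\Omega$ by the vanishing of all $N\times N$ minors of the stacked matrix is a legitimate substitute for the paper's determinantal family $\det[z_1(A_1^*+x_3A_2-x_1I)+z_2(A_2^*+x_3A_1-x_2I)]$ read off a simultaneous triangularization.

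The genuine gap is the step you yourself flag and then do not bridge: ``the coordinate map $x_3\colon\overline\Omega\to\overline{\mathbb D}$ is proper and finite-to-one, so $M_{x_3}$ is a pure isometry of finite multiplicity.'' No such implication is available off the shelf. The multiplicity in question is $\dim\bigl(H^2(\mu)\ominus x_3H^2(\mu)\bigr)$, a Hilbert-space codimension in the closure of polynomials in $L^2(\partial\Omega,\mu)$, and finiteness of the geometric fibres gives you eigenvectors $k_{\bar y}$ of $M_{x_3}^*$ but no \emph{upper} bound on this codimension: a priori infinitely many monomials $x_1^ix_2^j$ could remain independent modulo $\overline{\mathrm{Ran}}\,M_{x_3}$ unless the polynomials vanishing on $\Omega$ supply enough relations. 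Supplying them is precisely where the paper does real work: it first shows, using the distinguished-boundary hypothesis (via the point $(0,1,0)\in\partial\mathbb E\setminus b\mathbb E$), that not every defining polynomial of $\Omega$ is divisible by $x_3$, then picks $p_1=\sum_{i\le m_1,\,j\le m_2}a_{ij}x_1^ix_2^j+x_3r$ with $a_{m_1m_2}\neq 0$ and uses the relation $p_1=0$ in $L^2(\mu)$ to conclude that $H^2(\mu)$ is $\overline{\mathrm{Ran}}\,M_{x_3}$ plus a finite span of monomials, i.e.\ $\dim\mathrm{Ran}(I-M_{x_3}M_{x_3}^*)<\infty$. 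Without this step, Theorem \ref{model1} only hands you bounded operators $A_1,A_2$ on a possibly infinite-dimensional $\mathcal D_{M_{x_3}^*}$, and then both the statement's requirement that $A_1,A_2$ be matrices and your final move — simultaneous triangularization to convert joint eigenvalues of the adjoint pair into $\sigma_T(A_1^*+y_3A_2,A_2^*+y_3A_1)$, which relies on the Section 2 identification of the Taylor spectrum with joint eigenvalues \emph{for matrices} — collapse. So you must either reproduce the paper's divisibility argument or give an independent proof that the shift $M_{x_3}$ has finite multiplicity; properness and finite fibres alone, as stated, do not yield it.
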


\begin{proof}
Suppose that
\[
\Omega = \{(x_1,x_2,x_3) \in \mathbb E \,:\, (x_1,x_2)\in
\sigma_T(A_1^*+x_3A_2, A_2^*+x_3A_1)\},
\]
where $A_1,A_2$ are commuting matrices of order $n$ satisfying the
given conditions. Then for any $x_3$, $A_1^*+x_3A_2$ and
$A_2^*+x_3A_1$ commute and consequently
$\sigma_T(A_1^*+x_3A_2,A_2^*+x_3A_1)\neq \emptyset$. We now show
that if $|x_3|<1$ and $(x_1,x_2)\in\sigma_T(A_1^*+x_3A_2,
A_2^*+x_3A_1)$ then $(x_1,x_2,x_3)\in \mathbb E$ which will
establish that $\Omega$ is non-empty and that it exits through the
distinguished boundary $b\mathbb E$. This is because proving the
fact that $\Omega$ exits through $b\mathbb E$ is same as proving
that $\overline{\Omega}\cap (\partial E \setminus bE)=\emptyset$,
i.e, if $(x_1,x_2,x_3)\in\overline{\Omega}$ and $|x_3|<1$ then
$(x_1,x_2,x_3)\in \mathbb E$ (by Theorem \ref{thm1}). Let
$|x_3|<1$ and $(x_1,x_2)$ be a joint eigenvalue of $A_1^*+x_3A_2$
and $A_2^*+x_3A_1$. Then there exists a unit joint eigenvector
$\nu$ such that $ (A_1^*+x_3A_2)\nu=x_1\nu \textup{ and }
(A_2^*+x_3A_1)\nu=x_2\nu $. Taking inner product with respect to
$\nu$ we get
\[
\alpha_1+\bar{\beta_1}x_3=x_1 \text{ and }
\beta_1+\bar{\alpha_1}x_3=x_2 \,,
\]
where $ \alpha_1=\langle A_1^*\nu,\nu \rangle $ and $
\beta_1=\langle A_2^*\nu,\nu \rangle$. Here $\alpha_1$ and
$\beta_1$ are unique because we have that $
x_1-\bar{x_2}x_3=\alpha_1(1-|x_3|^2)$ and $
x_2-\bar{x_1}x_3=\beta_1(1-|x_3|^2) $ which lead to
\[
\alpha_1=\frac{x_1-\bar{x_2}x_3}{1-|x_3|^2} \text{ and }
\beta_1=\frac{x_2-\bar{x_1}x_3}{1-|x_3|^2}.
\]
Since $A_1,A_2$ commute and $[A_1^*,A_1]=[A_2^*,A_2]$,
$A_1^*+A_2z$ is a normal matrix for every $z$ of unit modulus. So
we have that $\|A_1^*+A_2z\|=\omega(A_1^*+A_2z)<1$ and by Lemma
\ref{funda-properties}, $\omega(A_1+A_2z)<1$ for every $z$ in
$\mathbb T$. This implies that $\omega(z_1A_1^*+z_2A_2^*)<1$, for
every $z_1,z_2$ in $\mathbb T$ and hence
\[
|z_1\langle A_1^*\nu,\nu \rangle +z_2 \langle A_2^*\nu,\nu
\rangle|<1, \text{ for every } z_1,z_2 \in \mathbb T.
\]
If both $ \langle A_1^*\nu,\nu \rangle $ and $\langle A_2^*\nu,\nu
\rangle$ are non-zero, we can choose $z_1=\frac{|\langle
A_1^*\nu,\nu \rangle|}{\langle A_1^*\nu,\nu \rangle}$ and
$z_2=\frac{|\langle A_2^*\nu,\nu \rangle|}{\langle A_2^*\nu,\nu
\rangle}$ to get $|\alpha_1|+|\beta_1|<1$. If any of them or both
$ \langle A_1^*\nu,\nu \rangle $ and $\langle A_2^*\nu,\nu
\rangle$ are zero then also $|\alpha_1|+|\beta_1|<1$. Therefore,
by Theorem \ref{thm1}, $(x_1,x_2,x_3)$ is in $\mathbb E$. Thus,
$\Omega$ is non-empty and it exits through the distinguished
boundary $b\mathbb E$.

Again for any $x_3$, there is a unitary matrix $U$ of order $n$
(see Lemma \ref{spectra1}) such that $U^*(A_1^*+x_3A_2)U$ and
$U^*(A_2^*+x_3A_1)U$ have the following upper triangular form

\begin{align*}
U^*(A_1^*+x_3A_2)U &=
\begin{pmatrix}
\alpha_1+\bar{\beta_1}x_3 & \ast & \ast & \ast \\
0&\alpha_2+\bar{\beta_2}x_3 & \ast & \ast \\
\vdots & \vdots &\ddots & \vdots \\
0 & 0 & \cdots & \alpha_n+\bar{\beta_n}x_3
\end{pmatrix}\,, \\
U^*(A_2^*+x_3A_1)U &=
\begin{pmatrix}
\beta_1+\bar{\alpha_1}x_3 & \ast & \ast & \ast \\
0&\beta_2+\bar{\alpha_2}x_3 & \ast & \ast \\
\vdots & \vdots &\ddots & \vdots \\
0 & 0 & \cdots & \beta_n+\bar{\alpha_n}x_3
\end{pmatrix}
\end{align*}
and the joint spectrum $\sigma_T(A_1^*+x_3A_2,A_2^*+x_3A_1)$ can
be read off the diagonal of the common triangular form. It is
evident from definition that $\Omega$ has dimension one. Thus it
remains to show that $\Omega$ is a variety in $\mathbb E$. We show
that $\Omega$ is a variety in $\mathbb E$ determined by the ideal
generated by the set of polynomials
\[
\mathcal F
=\{\det[z_1(A_1^*+x_3A_2-x_1I)+z_2(A_2^*+x_3A_1-x_2I)]=0 \,:\,
z_1,z_2\in\overline{\mathbb D}\}.
\]
This is same as showing that ${\mathbb E}\cap \mathbb Z(\mathcal
F)=\Omega$, $\mathbb Z(\mathcal F)$ being the variety determined
by the ideal generated by $\mathcal F$. Let
$(x_1,x_2,x_3)\in\Omega$. Then $x_1={\alpha_k}+\bar{\beta_k}x_3$
and $x_2={\beta_k}+\bar{\alpha_k}x_3$ for some $k$ between $1$ and
$n$. Therefore,
$z_1({\alpha_k}+\bar{\beta_k}x_3-x_1)+z_2({\beta_k}+\bar{\alpha_k}x_3-x_2)=0$
for any $z_1,z_2$ in $\overline{\mathbb D}$ and consequently
$(x_1,x_2,x_3)\in\mathbb Z(\mathcal F)\cap \mathbb E$. Again let
$(x_1,x_2,x_3)\in\mathbb Z(\mathcal F)\cap \mathbb E$. Then
$\det[z_1(A_1^*+x_3A_2-x_1I)+z_2(A_2^*+x_3A_1-x_2I)]=0$ for all
$z_1,z_2\in\overline{\mathbb D}$ which implies that the two
matrices $A_1^*+x_3A_2-x_1I$ and $A_2^*+x_3A_1-x_2I$ have $0$ at a
common position in their diagonals. Thus $(x_1,x_2)$ is a joint
eigenvalue of $A_1^*+x_3A_2$ and $A_2^*+x_3A_1$ and
$(x_1,x_2,x_3)\in\Omega$. Hence $\Omega=\mathbb Z(\mathcal F)\cap
\mathbb E$ and $\Omega$ is a distinguished variety in $\mathbb E$.
The ideal generated by $\mathcal F$ must have a finite set of
generators and we leave it to an interested rader to determine
such a finite set.\\

Conversely, let $\Omega$ be a distinguished variety in $\mathbb
E$. We first show that $\Omega$ cannot be a two-dimensional
complex algebraic variety. Let if possible $\Omega$ be
two-dimensional and determined by a single polynomial $p$ in three
variables, i.e,
\[
\Omega =\{ (x_1,x_2,x_3)\in \mathbb E \,:\, p(x_1,x_2,x_3)=0 \}.
\]
Let $(y_1,y_2,y_3)\in\Omega$. Therefore, $|y_3|<1$. We show that
$\overline{\Omega}$ has intersection with $\partial \mathbb E
\setminus b\mathbb E$ which proves that $\Omega$ does not exit
through the distinguished boundary. Let $S_{y_3}$ be the set of
all points in $\Omega$ with $y_3$ as the third co-ordinate, i.e,
$S_{y_3}=\{ (x_1,x_2,x_3)\in\Omega\,:\,x_3=y_3 \}$. Such
$(x_1,x_2)$ are the zeros of the polynomial $p(x_1,x_2,y_3)$. If
$x_1=x_2$ for every $(x_1,x_2,y_3)\in S_{y_3}$, then
$p(x_1,x_2,y_3)$ becomes a polynomial in one variables and
consequently $S_{y_3}$ is a finite set. If every such $S_{y_3}$ is
a finite set then $\Omega$ becomes a one dimensional variety, a
contradiction. Therefore, there exists $y_3$ such that
$p(x_1,x_2,y_3)$ gives a one-dimensional variety and consequently
$S_{y_3}$ is not a finite set. We choose such $y_3$. Since each
$(x_1,x_2,y_3)$ in $S_{y_3}$ is a point in $\mathbb E$, by Theorem
\ref{thm1}, there exist complex numbers $\beta_1,\beta_2$ with
$|\beta_1|+|\beta_2|< 1$ such that $x_1=\beta_1+\bar{\beta_2}y_3$
and $x_2=\beta_2+\bar{\beta_1}y_3$. Let us consider the domain $G$
defined by
\[
G=\{(\beta_1,\beta_2)\in\mathbb C^2 \,:\, |\beta_1|+|\beta_2|<1
\},
\]
and the map
\begin{align*}
\varpi\,:\, \mathbb C^2 &\rightarrow \mathbb C^2 \\
(\beta_1,\beta_2) &\mapsto
(\beta_1+\bar{\beta_2}y_3,\beta_2+\bar{\beta_1}y_3).
\end{align*}
It is evident that the points $(x_1,x_2)$ for which
$(x_1,x_2,y_3)\in S_{y_3}$ lie inside $\varpi (G)$. Also it is
clear that $\varpi$ maps $G$ into $\mathbb D^2$ because the
tetrablock lives inside $\mathbb D^3$. This map $\varpi$ is
real-linear and invertible when considered a map from $\mathbb
R^4$ to $\mathbb R^4$, in fact a homeomorphism of $R^4$.
Therefore, $\varpi$ is open and it maps the boundary of $G$ onto
the boundary of $\varpi(G)$. Therefore, the zero-set of the
polynomial $p(x_1,x_2,y_3)$ in two variables ($y_3$ being a
constant) is a one-dimensional variety a part of which lies inside
$\varpi(G)$. Therefore, this variety intersects the boundary of
the domain $\varpi(G)$ which is the image of the set
$\{(\beta_1,\beta_2)\in\mathbb C^2 \,:\, |\beta_1|+|\beta_2|=1 \}$
under $\varpi$. Thus, there is a point $(\lambda_1,\lambda_2,y_3)$
in the zero set of $p$ such that
$\lambda_1=\beta_1+\bar{\beta_2}y_3$ and
$\lambda_2=\beta_2+\bar{\beta_1}y_3$ with $|\beta_1|+|\beta_2|=1$.
Therefore, $(\lambda_1,\lambda_2,y_3)\in \overline{\Omega}\cap
\overline{\mathbb E}$. Since $|y_3|<1$,
$(\lambda_1,\lambda_2,y_3)\in
\partial \mathbb E \setminus b\mathbb E$ and consequently $\Omega$
is not a distinguished variety, a contradiction. Thus, there is no
two-dimensional distinguished variety in $\mathbb E$ and $\Omega$
is one-dimensional.

Let $p_1,\hdots,p_n$, $(n>1)$ be polynomials in three variables
such that
\[
\Omega=\{ (x_1,x_2,x_3)\in\mathbb E \,:\,
p_1(x_1,x_2,x_3)=\hdots=p_n(x_1,x_2,x_3)=0  \}.
\]
We claim that all $p_i$ cannot be divisible by $x_3$. Indeed, if
$p_i$ is divisible by $x_3$ for all $i$ then $p_1=\hdots=p_n=0$
when $x_3=0$. The point $(0,1,0)\in \overline{\mathbb E}$ (by
choosing $\beta_1=0, \beta_2=1$ and applying Theorem \ref{thm1})
and clearly $p_i(0,1,0)=0$ for each $i$ but $(0,1,0) \notin
b\mathbb E$ although $(0,1,0)\in
\partial \mathbb E$ as $|\beta_1|+|\beta_2|=1$. This leads to the
conclusion that $\Omega$ does not exit through the distinguished
boundary of $\mathbb E$, a contradiction. Therefore, all
$p_1,\hdots,p_n$ are not divisible by $x_3$. Let $p_1$ be not
divisible by $x_3$ and
\begin{equation}\label{thm01}
p_1(x_1,x_2,x_3)= \sum_{ \substack{ 0\leq i \leq m_1\\
0\leq j \leq m_2 } } a_{ij}x_1^ix_2^j+x_3r(x_1,x_2,x_3)\,,
\end{equation}
for some polynomial $r$ and $a_{m_1m_2}\neq 0$.

Let $(M_{x_1},M_{x_2},M_{x_3})$ be the triple of operators on
$H^2(\mu)$ given by the multiplication by co-ordinate functions,
where $\mu$ is the measure as in Lemma \ref{basiclem2}. Then by
Lemma \ref{lempure}, $(M_{x_1},M_{x_2},M_{x_3})$ is a pure
$\mathbb E$-isometry on $H^2(\mu)$. Now $M_{x_3}M_{x_3}^*$ is a
projection onto $Ran\, M_{x_3}$ and
\[
Ran\,M_{x_3} \supseteq \{ x_3f(x_1,x_2,x_3):\; f \text{ is a
polynomial in } x_1,x_2,x_3 \}.
\]
It is evident from (\ref{thm01}) that
\[
a_{lk}x_1^lx_2^k \in \overline{Ran}\, M_{x_3}\oplus
\overline{\text{span}} \{ x_1^ix_2^j\,:\, 1\leq i \leq m_1,\,1\leq
j \leq m_2, i\neq l, j\neq k \},
\]
for each $k,l$ and hence
\[
H^2(\mu)=\overline{Ran}\, M_{x_3}\oplus \overline{\text{span}} \{
x_1^ix_2^j\,:\, 1\leq i\leq m_1, 1\leq j \leq m_2 \}.
\]
Therefore, $Ran\,(I-M_{x_3}M_{x_3}^*)$ has finite dimension, say
$n$. By Theorem \ref{model1}, $(M_{x_1},M_{x_2},M_{x_3})$ can be
identified with $(T_{\varphi},T_{\psi},T_{z})$ on $H^2(\mathcal
D_{M_{T_3}^*})$, where $\varphi(z)=A_1^*+A_2z$ and
$\psi(z)=A_2^*+A_1z$, $A_1,A_2$ being the fundamental operators of
$(T_{A_1^*+A_2z}^*,T_{A_2^*+A_1z}^*,T_{z}^*)$.
 By Lemma \ref{lemeval}, a point $(y_1,y_2,y_3)$ is in $\Omega$ if
 and only if $(\bar y_1, \bar y_2, \bar y_3)$ is a joint
 eigenvalue of $T_{\varphi}^*,T_{\psi}^*$ and $T_{z}^*$.
This can happen if and only if $(\bar y_1,\bar y_2)$ is a joint
eigenvalue of $\varphi(y_3)^*$ and $\psi(y_3)^*$. This leads to
\[
\Omega =\{ (x_1,x_2,x_3)\in \mathbb E\,:\, (x_1,x_2) \in \sigma_T(
A_1^*+x_3A_2, A_2^*+x_3A_1) \}.
\]
By the commutativity of $T_\varphi$ and $T_\psi$ we have that
$[A_1,A_2]=0$ and that $[A_1^*,A_1]=[A_2^*,A_2]$. The proof is now
complete.

\end{proof}

A variety given by the determinantal representation (\ref{eq:W}),
where $A_1,A_2$ satisfy $\|A_1^*+A_2z\|_{\infty, \mathbb T}=1$,
may or may not be a distinguished variety in the tetrablock as the
following examples illustrate.
\begin{eg}
Let us consider the commuting self-adjoint matrices
\[
A = \begin{pmatrix}
0 & 0 & 0 \\
0 & 0 & 1 \\
0 & 1 & 0
\end{pmatrix}
\textup{ and } B = \begin{pmatrix}
1 & 0 & 0 \\
0 & 0 & 0 \\
0 & 0 & 0
\end{pmatrix} .
\]
Then for any $z$ of unit modulus
\[
A+Bz = \begin{pmatrix}
z & 0 & 0 \\
0 & 0 & 1 \\
0 & 1 & 0
\end{pmatrix}
\]
is a normal matrix and for a unit vector
\[
h=\begin{pmatrix}
\alpha_1 \\
\alpha_2 \\
\alpha_3
\end{pmatrix} \in \mathbb C^3,
\]
 we have that $ \| (A+Bz)h \|=\| h \| $ and therefore, $\| A+Bz
\|=\omega(A+Bz)=1$. Now we define
\[
\Omega= \{ (x_1,x_2,x_3)\in \mathbb E\,:\,
(x_1,x_2)\in\sigma_T(A+x_3B,B+x_3A)  \}.
\]
Clearly $(1,0,0)\in \partial \mathbb E \cap \overline{\Omega}$, by
Theorem \ref{thm1} (by choosing $x_3=0, \beta_1=1$ and
$\beta_2=0$) but $(1,0,0) \notin b\mathbb E$ which shows that
$\Omega$ does not exit through the distinguished boundary
$b\mathbb E$. Hence $\Omega$ is not a distinguished variety.
\end{eg}

\begin{eg}
Let
\[
A_1 = A_2 = \begin{pmatrix}
0 & 1 & 0 \\
0 & 0 & 0 \\
0 & 0 & 0
\end{pmatrix}.
\]
Then
\[
A_1^*+A_2z = A_1^*+A_1z= \begin{pmatrix}
0 & z & 0 \\
1 & 0 & 0 \\
0 & 0 & 0
\end{pmatrix}
\]
and $\| A_1^*+A_1z \|=1$, for all $z\in\mathbb T$. Let
\[
\Omega =\{ (x_1,x_2,x_3)\in\mathbb E \,:\,
(x_1,x_2)\in\sigma_T(A_1^*+x_3A_1,A_1^*+x_3A_1)\}
\]
Here
\[
A_1^*+x_3A_1-xI = \begin{pmatrix}
-x & x_3 & 0 \\
1 & -x & 0 \\
0 & 0 & -x
\end{pmatrix}
\]
and thus
\[
\det (A_1^*+x_3A_1-xI)=x(x_3-x^2).
\]
Clearly $\Omega$ is non-empty as it contains the points of the
form $(0,0,x_3)$. Clearly this sheet of the variety $\Omega$ exits
through $b\mathbb E$. It is evident that $\Omega$ is
one-dimensional. Also, $\Omega$ contains the points $(x,x,x_3)$
with $x^2=x_3$. By Theorem \ref{thm1}, we have that
$x=\beta_1+\bar{\beta_2}x_3=\beta_2+\bar{\beta_1}x_3$, for some
$\beta_1,\beta_2$ with $|\beta_1|+|\beta_2|\leq 1$. Now when
$x_3\neq 0$, $x\neq 0$ and hence $(\beta_1,\beta_2)\neq (0,0)$.
When $\beta_1 \neq \beta_2$, we have
\[
|x_3|=\left| \frac{\beta_1-\beta_2}{\bar{\beta_1}-\bar{\beta_2}}
\right| =1
\]
and hence $(x,x,x_3)\in b\mathbb E$. When $\beta_1=\beta_2=\beta$,
we show that $(x,x,x_3)\in\mathbb E$ if $|x_3|<1$. Let $|x_3|<1$
and $x=\beta+ \bar{\beta} x_3$. It suffices to show that
$|\beta|+|\bar{\beta}|<1$, i.e, $|\beta|<1/2$. Let if possible
$|\beta|=1/2$ and $\beta=\frac{1}{2}e^{i\theta}$. Since $x^2=x_3$,
without loss of generality let $x=\sqrt{x_3}$. So we have
\[
\sqrt{x_3}=\beta +\bar{\beta} x_3 =\frac{1}{2}e^{i\theta}+
\frac{1}{2}e^{-i\theta}x_3,
\]
which implies that $ (\sqrt{x_3}-e^{i\theta})^2=0 $. Therefore,
$|x_3|=1$, a contradiction. Thus $|\beta|<1/2$ and $(x,x,x_3)\in
\mathbb E$. Therefore, $\overline{\Omega}\cap \partial \mathbb
E=\overline{\Omega}\cap b\mathbb E$ and $\Omega$ is a
distinguished variety.
\end{eg}

Below we characterize all distinguished varieties for which
$\|A_1+A_2z\|_{\infty,\mathbb T}<1$.

\begin{thm}\label{char:1}
Let $\Omega$ be a variety in $\mathbb E$. Then
\[
\Omega = \{(x_1,x_2,x_3) \in \mathbb E \,:\,
(x_1,x_2)\in\sigma_T(A_1^*+x_3A_2, A_2^*+x_3A_1)\}
\]
for two commuting square matrices $A_1,A_2$ satisfying
$[A_1^*,A_1]=[A_2^*,A_2]$ and $\|A_1+A_2z\|_{\infty,\mathbb T}<1$
if and only if $\Omega$ is a distinguished variety in $\mathbb E$
such that $\partial \Omega \cap bD_{\mathbb E} = \emptyset$, where
\[
bD_{\mathbb E} = \{(x_1,x_2,x_1x_2) \,:\, |x_1|=|x_2|=1 \}.
\]

\end{thm}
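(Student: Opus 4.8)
The plan is to reduce both implications to one computation of the distinguished boundary $\partial\Omega$ of a variety written in the determinantal form, and then translate the geometric condition $\partial\Omega\cap bD_{\mathbb E}=\emptyset$ into the norm condition on the matrices. The organizing observation is the following boundary identity. Fix $(x_1,x_2,x_3)\in\partial\Omega$; by the definition of a distinguished variety and by \eqref{dv1} this means $|x_3|=1$ and $(x_1,x_2)\in\sigma_T(A_1^*+x_3A_2,A_2^*+x_3A_1)$. As in the proof of Theorem \ref{thm:DVchar}, the hypotheses $[A_1,A_2]=0$ and $[A_1^*,A_1]=[A_2^*,A_2]$ make $A_1^*+x_3A_2$ and $A_2^*+x_3A_1$ commuting \emph{normal} matrices when $|x_3|=1$; hence they are simultaneously diagonalizable and every joint eigenvalue is attained at a common unit eigenvector $\nu$. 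Writing $\alpha=\langle A_1\nu,\nu\rangle$ and $\beta=\langle A_2\nu,\nu\rangle$, normality gives $x_1=\bar\alpha+x_3\beta$ and $x_2=\bar\beta+x_3\alpha$. A direct expansion yields $|x_1|^2=|x_2|^2=|\alpha|^2+|\beta|^2+2\,\mathrm{Re}(x_3\alpha\beta)$, so $|x_1|=|x_2|$ everywhere on $\partial\Omega$, and a short calculation shows that $x_3=x_1x_2$ holds precisely when this common value equals $1$. Thus a boundary point lies in $bD_{\mathbb E}$ if and only if $|x_1|=1$, and since the largest modulus of an eigenvalue of a normal matrix is its norm,
\[
\max_{(x_1,x_2,x_3)\in\partial\Omega}|x_1|=\max_{z\in\mathbb T}\|A_1^*+zA_2\|=\|A_1^*+A_2z\|_{\infty,\mathbb T}.
\]
Consequently $\partial\Omega\cap bD_{\mathbb E}=\emptyset$ is equivalent to $\|A_1^*+A_2z\|_{\infty,\mathbb T}<1$.

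For the forward implication I would start from $\|A_1+A_2z\|_{\infty,\mathbb T}<1$. Since $\omega(\cdot)\le\|\cdot\|$, this gives $\omega(A_1+A_2z)\le c<1$ uniformly on $\mathbb T$, and Lemma \ref{funda-properties} in its scaled form (with $1$ replaced by $c$ throughout its proof) yields $\omega(A_1^*+A_2z)\le c$; because $A_1^*+A_2z$ is normal this upgrades to $\|A_1^*+A_2z\|=\omega(A_1^*+A_2z)\le c<1$. Hence $\|A_1^*+A_2z\|_{\infty,\mathbb T}<1$, so Theorem \ref{thm:DVchar} shows $\Omega$ is a one-dimensional distinguished variety, while the boundary identity gives $|x_1|<1$ on $\partial\Omega$, i.e. $\partial\Omega\cap bD_{\mathbb E}=\emptyset$.

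For the converse I would invoke the representation half of Theorem \ref{thm:DVchar}: a distinguished variety $\Omega$ is one-dimensional and admits a representation \eqref{eq:W} with commuting matrices obeying $[A_1^*,A_1]=[A_2^*,A_2]$ and $\|A_1^*+A_2z\|_{\infty,\mathbb T}\le1$. The extra hypothesis $\partial\Omega\cap bD_{\mathbb E}=\emptyset$, combined with the boundary identity, forces the strict inequality $\|A_1^*+A_2z\|_{\infty,\mathbb T}<1$. It then remains to pass from the strict bound on $A_1^*+A_2z$ to the asserted strict bound on $A_1+A_2z$.

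I expect this last passage to be the main obstacle. Unlike $A_1^*+A_2z$, the pencil $A_1+A_2z$ is \emph{not} normal under the standing hypotheses, so its operator norm is not controlled by its numerical radius, and Lemma \ref{funda-properties} only delivers $\omega(A_1+A_2z)<1$ rather than $\|A_1+A_2z\|<1$; indeed the gap between these can be as large as the factor $2$ in \eqref{nradius}. The plan to close this gap is to exploit the non-uniqueness of the representation: rather than using arbitrary matrices realizing $\Omega$, I would take $A_1,A_2$ to be the fundamental operators of the model $\mathbb E$-isometry $(M_{x_1},M_{x_2},M_{x_3})$ on $H^2(\mu)$ produced in the proof of Theorem \ref{thm:DVchar} (via Lemma \ref{lempure} and Theorem \ref{model1}), and to show that for this canonical choice the normal pencil $A_1^*+A_2z$ already governs the relevant norm, so that the strict boundary bound propagates to $\|A_1+A_2z\|_{\infty,\mathbb T}<1$. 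Verifying that this distinguished choice genuinely yields the non-starred norm bound — and not merely the numerical-radius bound — is the delicate point that must be carried out carefully; it is the step where the specific structure of the fundamental operators, rather than the abstract determinantal data alone, is essential.
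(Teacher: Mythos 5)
You correctly isolated the crux, and your diagnosis of the star/non-star mismatch is in fact a defect of the statement rather than a step you failed to find: the paper's own proof of the converse ends by establishing $\|T_\varphi\|=\|A_1^*+A_2z\|_{\infty,\mathbb T}<1$ and stops there, and the introduction announces the theorem with the pencil $A_1^*+A_2z$, so the unstarred $\|A_1+A_2z\|_{\infty,\mathbb T}<1$ in the statement is evidently a misprint for the starred bound. Your proposed repair (choose the canonical fundamental operators and upgrade the starred bound to the unstarred one) cannot be carried out in general: with $A_1=A_2=\left(\begin{smallmatrix}0&a\\0&0\end{smallmatrix}\right)$, $|a|=3/5$, all the structural hypotheses hold, $\|A_1^*+A_2z\|=|a|<1$ for every $z\in\mathbb T$, yet $\|A_1+A_2z\|_{\infty,\mathbb T}=2|a|>1$; so the starred strict bound does not imply the unstarred one, and nothing in the paper ever controls more than $\omega(A_1+A_2z)$ for the fundamental operators (that is exactly the factor-$2$ gap in (\ref{nradius}) you pointed to). The correct move is to prove the theorem with $A_1^*+A_2z$ throughout, which is what the paper does; your forward direction (scaled Lemma \ref{funda-properties} plus normality of the starred pencil, then Theorem \ref{thm:DVchar}) then matches the paper's.

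Even for the starred statement, your converse has a genuine hole at the displayed identity $\max_{(x_1,x_2,x_3)\in\partial\Omega}|x_1|=\|A_1^*+A_2z\|_{\infty,\mathbb T}$. Your argument gives only ``$\leq$'': boundary points of $\Omega$ are joint-eigenvalue points of a normal pencil. The ``$\geq$'' direction requires that for each $z\in\mathbb T$ a norm-attaining joint eigenvalue of $(A_1^*+zA_2,A_2^*+zA_1)$ is actually witnessed by a point of $\partial\Omega$, i.e.\ lies in $\overline{\Omega}$ -- that every boundary joint-eigenvalue point is a limit of points of $\Omega$ from inside $\mathbb E$. You never prove this, and it is not free: the a priori information $\omega(A_1+A_2z)\leq 1$ only places interior joint-eigenvalue points in $\overline{\mathbb E}$, not in $\mathbb E$. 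The paper sidesteps this entirely with operator theory: since $(M_{x_1},M_{x_2},M_{x_3})$ on $H^2(\mu)$ is unitarily equivalent to $(T_\varphi,T_\psi,T_z)$ with $\varphi(z)=A_1^*+A_2z$, one gets $\|A_1^*+A_2z\|_{\infty,\mathbb T}=\|T_\varphi\|=\|M_{x_1}\|\leq \sup_{\partial\Omega}|x_1|<1$, the last inequality by compactness of $\partial\Omega$ and disjointness from $bD_{\mathbb E}$. Note also that your eigenvector computation of the boundary identity, while correct, is unnecessary: on $b\mathbb E$ one has $x_1=\bar{x}_2x_3$ with $|x_3|=1$, so $|x_1|=|x_2|$ and $x_3=x_1x_2$ hold exactly when $|x_1|=1$, directly from the description (\ref{dv1}). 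Substituting the unitary-equivalence argument for your max identity, and the starred pencil for the unstarred one, turns your outline into the paper's proof.
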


\begin{proof}

Recall that
\begin{align*}
b\mathbb E & = \{ (x_1,x_2,x_3)\in\mathbb C^3\,:\, x_1=\bar
x_2x_3,\,|x_3|=1 \textup{ and } |x_2|\leq 1 \}
\\& = \{ (x_1,x_2,x_3)\in\overline{\mathbb E}: |x_3|=1 \}.
\end{align*}
It is clear that $bD_{\mathbb E} \subseteq b\mathbb E $. If
$\Omega$ has such an expression in terms of joint eigenvalues of
$A_1^*+A_2z$ and $A_2^*+A_1z$ then by Theorem \ref{thm:DVchar},
$\Omega$ is a distinguished variety in $\mathbb E$. We show that
$\partial \Omega \cap bD_{\mathbb E} = \emptyset$. Obviously
$A_1^*+A_2z$ and $A_2^*+A_1z$ are commuting normal matrices for
every $z\in \mathbb T$. If $(x_1,x_2, e^{i\theta}) \in
\partial \Omega$, then $(x_1,x_2)$ is a joint eigenvalue of
$A_1^*+e^{i\theta}A_2$ and $A_2^*+e^{i\theta}A_1$. But
$\|A_1^*+A_2z\| = \omega(A_1^*+A_2z) <1$ and hence $|x_i|<1$ for
$i=1,2$. Therefore, $(x_1,x_2,e^{i\theta}) \notin bD_\mathbb E$.

Conversely, suppose that $\Omega$ is a distinguished variety such
that $\partial \Omega \cap bD_{\mathbb E} = \emptyset$. In course
of the proof of Theorem \ref{thm:DVchar} we showed that $\Omega$
is given by (\ref{eq:W}) with $A_1,A_2$ being the fundamental
operators of $(M_{x_1}^*,M_{x_2}^*,M_{x_3}^*)$ on $H^2(\mu)$. What
we need to show is that $\|A_1+A_2z\|_{\infty,\mathbb T}<1$.

We saw in the proof of the Theorem \ref{thm:DVchar} that
$(M_{x_1},M_{x_2},M_{x_3})$ is unitarily equivalent to
$(T_\varphi,T_\psi,T_z)$ on $H^2(\mathcal D_{M_{T_3}^*})$, where
$\varphi(z) = A_1^* + A_2z$ and $ \psi(z)=A_2^*+A_1z$, for two
commuting matrices $A_1,A_2$ satisfying $[A_1^*,A_1]=[A_2^*,A_2]$.
Since $\partial \Omega \cap bD_{\mathbb E} = \emptyset$, we have
that $ \| M_{x_1}^* \|<1$ and $\| M_{x_2}^* \|<1 . $ Therefore,
\[
\| T_{\varphi} \|= \| A_1^*+A_2z \|_{\infty,\mathbb
T}=\sup_{z\in\mathbb T} \;\omega(A_1^*+A_2z)<1
\]
and the proof is complete.

\end{proof}

\section{A connection with the bidisc and the symmetrized bidisc}

\noindent Recall that the symmetrized bidisc $\mathbb G$, its
closure $\Gamma$ and the distinguished boundary $b\Gamma$ are the
following sets:
\begin{align*}
\mathbb G &= \{ (z_1+z_2,z_1z_2)\,:\,|z_1|< 1, |z_2|< 1
\}\subseteq \mathbb C^2;\\ \Gamma &= \{
(z_1+z_2,z_1z_2)\,:\,|z_1|\leq 1, |z_2|\leq 1 \};\\ b\Gamma &= \{
(z_1+z_2,z_1z_2)\,:\,|z_1|= 1, |z_2|= 1 \}\\ \quad &= \{
(s,p)\in\Gamma\,:\,|p|= 1 \}.
\end{align*}
A pair of commuting operators $(S,P)$ on a Hilbert space $\mathcal
H$ that has $\Gamma$ as a spectral set, is called a
$\Gamma$-contraction. The symmetrized bidisc enjoys rich operator
theory \cite{ay-jfa,ay-jot,tirtha-sourav,tirtha-sourav1, sourav}.
Operator theory and complex geometry of the tetrablock have
beautiful connections with that of the symmetrized bidisc as was
shown in \cite{tirtha}. We state here two of the important results
in this line from \cite{tirtha}.

\begin{lem}\label{tirtha-thm1}
A point $(x_1,x_2,x_3)\in \overline{\mathbb E}$ if and only if
$(x_1+zx_2,zx_3)\in \Gamma $ for every $z$ on the unit circle.
\end{lem}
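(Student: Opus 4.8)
The plan is to reduce everything to the two explicit scalar descriptions of the domains involved: Theorem \ref{thm1} for $\overline{\mathbb E}$, and the (known) companion description of $\Gamma$, namely that $(s,p)\in\Gamma$ if and only if $|p|\le 1$ and $s=\gamma+\bar\gamma p$ for some $\gamma\in\overline{\mathbb D}$; equivalently, when $|p|<1$ this reads $|s-\bar s p|\le 1-|p|^2$, and when $|p|=1$ it reads $s=\bar s p$ together with $|s|\le 2$ (i.e. $(s,p)\in b\Gamma$). The bridge between the two descriptions is the elementary identity, valid for every $z\in\mathbb T$,
\[
(x_1+zx_2)-\overline{(x_1+zx_2)}\,(zx_3)=(x_1-\bar x_2 x_3)+z(x_2-\bar x_1 x_3),
\]
which I would record first; throughout I write $s=x_1+zx_2$ and $p=zx_3$, so that $|p|=|x_3|$.

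For the forward implication, suppose $(x_1,x_2,x_3)\in\overline{\mathbb E}$. By Theorem \ref{thm1} there are $\beta_1,\beta_2$ with $|\beta_1|+|\beta_2|\le 1$ and $x_1=\beta_1+\bar\beta_2 x_3$, $x_2=\beta_2+\bar\beta_1 x_3$. Fixing $z\in\mathbb T$ and setting $\gamma=\beta_1+z\beta_2$, a one-line substitution (using $|z|=1$, so $\bar z z=1$) gives $x_1+zx_2=\gamma+\bar\gamma(zx_3)$. Since $|\gamma|\le|\beta_1|+|\beta_2|\le 1$ and $|zx_3|=|x_3|\le 1$, the description of $\Gamma$ yields $(x_1+zx_2,zx_3)\in\Gamma$ for every $z\in\mathbb T$. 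Note this direction needs no case split on $|x_3|$.

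For the converse I would split on $|x_3|$. If $|x_3|<1$, then $|p|<1$ for every $z\in\mathbb T$, so membership in $\Gamma$ gives $|s-\bar s p|\le 1-|x_3|^2$; substituting the bridge identity and taking the supremum over $z\in\mathbb T$ (via $\sup_{|z|=1}|a+zb|=|a|+|b|$) produces $|x_1-\bar x_2 x_3|+|x_2-\bar x_1 x_3|\le 1-|x_3|^2$. I would then set
\[
\beta_1=\frac{x_1-\bar x_2 x_3}{1-|x_3|^2},\qquad \beta_2=\frac{x_2-\bar x_1 x_3}{1-|x_3|^2},
\]
check by a direct computation that $x_1=\beta_1+\bar\beta_2 x_3$ and $x_2=\beta_2+\bar\beta_1 x_3$, and observe $|\beta_1|+|\beta_2|\le 1$, so Theorem \ref{thm1} places the point in $\overline{\mathbb E}$. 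If instead $|x_3|=1$, then each $(s,p)$ lies in $b\Gamma$, whence $s=\bar s p$; by the bridge identity this is $(x_1-\bar x_2 x_3)+z(x_2-\bar x_1 x_3)=0$ for all $z\in\mathbb T$, forcing both coefficients to vanish, i.e. $x_1=\bar x_2 x_3$ and $x_2=\bar x_1 x_3$. The bound $|s|\le 2$ gives $|x_1|+|x_2|\le 2$, and since $|x_1|=|x_2|$ (as $|x_3|=1$) this yields $|x_2|\le 1$; by the description \eqref{dv1} of $b\mathbb E$, the point lies in $b\mathbb E\subseteq\overline{\mathbb E}$.

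The only genuinely delicate point is the converse, and within it the boundary slice $|x_3|=1$, where the inequality $|s-\bar s p|\le 1-|p|^2$ degenerates; the remedy is to pass to the $b\Gamma$ form $s=\bar s p$, $|s|\le 2$, and to exploit that a relation $c_0+zc_1=0$ holding for all $z\in\mathbb T$ forces $c_0=c_1=0$. Everything else is routine algebra, with the supremum identity $\sup_{|z|=1}|a+zb|=|a|+|b|$ doing the real work of converting the one-parameter family of $\Gamma$-membership conditions into the single membership condition supplied by Theorem \ref{thm1}.
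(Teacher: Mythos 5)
Your proof is correct. Note that the paper itself does not prove Lemma \ref{tirtha-thm1} at all: it simply cites Lemma 3.2 of \cite{tirtha}, so there is no in-paper argument to compare against, and your write-up in effect reconstructs the standard proof from the cited source. Your route is the natural one: reduce membership in $\overline{\mathbb E}$ to the parametric description of Theorem \ref{thm1}, reduce membership in $\Gamma$ to the scalar description $s=\gamma+\bar\gamma p$, $|\gamma|\le 1$, $|p|\le 1$ (equivalently $|s-\bar s p|\le 1-|p|^2$ when $|p|<1$), and convert the one-parameter family of conditions into a single inequality via $\sup_{|z|=1}|a+zb|=|a|+|b|$. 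All the computations check out: the bridge identity is correct; in the forward direction $\gamma=\beta_1+z\beta_2$ does give $x_1+zx_2=\gamma+\bar\gamma(zx_3)$; in the converse your formulas $\beta_1=(x_1-\bar x_2x_3)/(1-|x_3|^2)$ and $\beta_2=(x_2-\bar x_1x_3)/(1-|x_3|^2)$ are exactly the expressions that also appear in the proof of Theorem \ref{thm:DVchar}, and the resulting bound $|\beta_1|+|\beta_2|\le 1$ feeds correctly into Theorem \ref{thm1}; and your separate treatment of the degenerate slice $|x_3|=1$ via $b\Gamma$ (using $s=\bar s p$ for all $z$ to force $x_1=\bar x_2x_3$, then $|s|\le 2$ together with $|x_1|=|x_2|$ to get $|x_2|\le 1$, landing in $b\mathbb E$ by \eqref{dv1}) is exactly the care the boundary case requires — a point the paper never makes explicit since it outsources the lemma. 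The one external ingredient you invoke, the characterization of $\Gamma$ by $s=\gamma+\bar\gamma p$, is genuinely standard but is not stated anywhere in this paper (Section 5 only recalls the set-theoretic definitions of $\mathbb G$, $\Gamma$ and $b\Gamma$), so in a final version you should attribute it, e.g.\ to Agler--Young \cite{ay-jot} or to the scalar case of the fundamental-operator equation in \cite{tirtha-sourav}; with that citation in place your proof is complete and self-contained.
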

\begin{thm}\label{tirtha-thm2}
Let $(T_1,T_2,T_3)$ be an $\mathbb E$-contraction. Then
$(T_1+zT_2,zT_3)$ is a $\Gamma$-contraction for every $z$ of unit
modulus.
\end{thm}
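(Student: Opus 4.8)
The plan is to reduce the von-Neumann inequality for the pair $(T_1+zT_2,zT_3)$ directly to the von-Neumann inequality for the triple $(T_1,T_2,T_3)$ by a change of polynomial variables, with Lemma \ref{tirtha-thm1} serving as the geometric bridge between $\overline{\mathbb E}$ and $\Gamma$. First I would fix $z\in\mathbb T$ and set $S:=T_1+zT_2$ and $P:=zT_3$. Since $T_1,T_2,T_3$ commute, so do $S$ and $P$, so $(S,P)$ is at least a legitimate commuting pair to test against $\Gamma$.

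Next, for an arbitrary holomorphic polynomial $q$ in two variables, I would introduce the three-variable polynomial $p(x_1,x_2,x_3)=q(x_1+zx_2,zx_3)$; here $z$ is a fixed scalar, so $p$ is genuinely a holomorphic polynomial in $x_1,x_2,x_3$. The key algebraic identity is $p(T_1,T_2,T_3)=q(T_1+zT_2,zT_3)=q(S,P)$, valid because substitution of the commuting operators into the composed polynomial is functorial. Applying the $\mathbb E$-contraction hypothesis (equivalently Lemma \ref{simpler}) to $p$ gives $\|q(S,P)\|=\|p(T_1,T_2,T_3)\|\leq\|p\|_{\infty,\overline{\mathbb E}}=\sup_{(x_1,x_2,x_3)\in\overline{\mathbb E}}|q(x_1+zx_2,zx_3)|$. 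It then remains to bound this supremum by $\|q\|_{\infty,\Gamma}$, and this is exactly where Lemma \ref{tirtha-thm1} enters: for every $(x_1,x_2,x_3)\in\overline{\mathbb E}$ and every $z$ on the unit circle, the point $(x_1+zx_2,zx_3)$ lies in $\Gamma$. Hence each value $|q(x_1+zx_2,zx_3)|$ is at most $\sup_{(s,p)\in\Gamma}|q(s,p)|=\|q\|_{\infty,\Gamma}$, and taking the supremum over $\overline{\mathbb E}$ preserves this bound, yielding $\|q(S,P)\|\leq\|q\|_{\infty,\Gamma}$ for every two-variable polynomial $q$.

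Finally I would dispose of the joint-spectrum requirement in the definition of a $\Gamma$-contraction. By the spectral mapping theorem for the Taylor joint spectrum, $\sigma_T(S,P)$ is the image of $\sigma_T(T_1,T_2,T_3)\subseteq\overline{\mathbb E}$ under the map $(x_1,x_2,x_3)\mapsto(x_1+zx_2,zx_3)$, which again lands in $\Gamma$ by Lemma \ref{tirtha-thm1}; alternatively one may invoke polynomial convexity of $\Gamma$ to drop the spectral condition entirely, in direct analogy with Lemma \ref{simpler} for $\overline{\mathbb E}$. I expect no serious obstacle in this argument: the only points demanding care are confirming that the composition identity $p(T_1,T_2,T_3)=q(S,P)$ is legitimate for operator arguments and that the pointwise bound survives passage to the supremum, both of which are routine once the commuting structure is recorded. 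The real content of the theorem is entirely geometric and is supplied by Lemma \ref{tirtha-thm1}; the operator-theoretic part is a formal transport of the von-Neumann inequality along that correspondence.
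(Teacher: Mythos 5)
Your proof is correct, and it coincides with the argument of record: the paper states Theorem \ref{tirtha-thm2} without proof, citing \cite{tirtha}, and the proof there is precisely your reduction --- compose $q$ with the substitution $(x_1,x_2,x_3)\mapsto(x_1+zx_2,zx_3)$ to get a three-variable polynomial $p$, apply the von-Neumann inequality for the $\mathbb E$-contraction $(T_1,T_2,T_3)$ to $p$, and use Lemma \ref{tirtha-thm1} to dominate $\|p\|_{\infty,\overline{\mathbb E}}$ by $\|q\|_{\infty,\Gamma}$. Your disposal of the joint-spectrum requirement (Taylor spectral mapping applied to $\sigma_T(T_1,T_2,T_3)\subseteq\overline{\mathbb E}$, or alternatively polynomial convexity of $\Gamma$ in analogy with Lemma \ref{simpler}) is likewise the standard way that point is handled.
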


See Lemma 3.2 and Theorem 3.5 respectively in \cite{tirtha} for
details. The distinguished varieties in the symmetrized bidisc,
their representations and relations with the operator theory have
been described beautifully in \cite{pal-shalit}. We recall from
\cite{pal-shalit} Lemma 3.1 and Theorem 3.5 which will help
proving the main result of this section, Theorem \ref{connection}.
\begin{lem}\label{lem:DVA}
Let $W\subseteq \mathbb G$. Then $W$ is a distinguished variety in
$\mathbb G$ if and only if there is a distinguished variety $V$ in
$\mathbb{D}^2$ such that $W=\pi(V)$, where $\pi$ is the
symmetrization map from $\mathbb C^2$ to $\mathbb C^2$ that maps
$(z_1,z_2)$ to $(z_1+z_2,z_1z_2)$.
\end{lem}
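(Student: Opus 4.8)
The plan is to prove both implications by exploiting the structure of the symmetrization map $\pi(z_1,z_2)=(z_1+z_2,z_1z_2)$ as a finite proper holomorphic map whose fibres are the orbits of the coordinate swap $\sigma(z_1,z_2)=(z_2,z_1)$. Before treating either direction I would record the boundary dictionary for $\pi$: it carries $\mathbb{D}^2$ onto $\mathbb{G}$, and since a point $(s,p)=(z_1+z_2,z_1z_2)$ lies in $\Gamma$ (respectively $\mathbb{G}$, respectively $b\Gamma$) exactly when both roots $z_1,z_2$ of $t^2-st+p$ lie in $\overline{\mathbb{D}}$ (respectively $\mathbb{D}$, respectively $\mathbb{T}$), one reads off $\pi(\mathbb{T}^2)=b\Gamma$, $\pi(\partial\mathbb{D}^2)=\partial\mathbb{G}$, and most importantly $\pi^{-1}(\partial\mathbb{G})\cap\overline{\mathbb{D}^2}\subseteq\partial\mathbb{D}^2$ together with $\pi^{-1}(b\Gamma)\cap\overline{\mathbb{D}^2}=\mathbb{T}^2$. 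I would also note the topological fact that, because $\overline{V}$ and $\overline{W}$ are compact and $\pi$ is continuous, $\pi(\overline{A})=\overline{\pi(A)}$ for any bounded set $A$.

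For the forward direction, suppose $V$ is a distinguished variety in $\mathbb{D}^2$ and put $W=\pi(V)$. To see that $W$ is a variety in $\mathbb{G}$ I would symmetrize: the set $\tilde V=V\cup\sigma(V)$ is a $\sigma$-invariant variety in $\mathbb{D}^2$ with $\pi(\tilde V)=\pi(V)=W$, and since $\pi$ is a finite map (the ring extension $\mathbb{C}[s,p]\subseteq\mathbb{C}[z_1,z_2]$ is integral, so $\pi$ is a closed morphism), the image of the ambient algebraic set is again algebraic. Intersecting with $\mathbb{G}$ and using that the whole $\pi$-fibre over a point of $\mathbb{G}$ sits inside $\mathbb{D}^2$ yields $W$ as a variety in $\mathbb{G}$, of dimension one because $\pi$ is finite-to-one. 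For distinguishedness I would compute $\overline{W}\cap\partial\mathbb{G}=\pi(\overline V)\cap\partial\mathbb{G}$; any such point is $\pi(v)$ with $v\in\overline V$ and $\pi(v)\in\partial\mathbb{G}$, hence $v\in\overline V\cap\partial\mathbb{D}^2=\overline V\cap\mathbb{T}^2$ by distinguishedness of $V$, so $v\in\mathbb{T}^2$ and $\pi(v)\in b\Gamma$. This gives $\overline{W}\cap\partial\mathbb{G}\subseteq b\Gamma$, and the reverse containment is immediate from $b\Gamma\subseteq\partial\mathbb{G}$, so $W$ is distinguished.

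For the converse, given a distinguished variety $W=\{(s,p)\in\mathbb{G}:Q(s,p)=0\}$ I would set $V=\pi^{-1}(W)\cap\mathbb{D}^2$, which is cut out by the symmetric polynomials $Q\circ\pi$ and hence is a variety in $\mathbb{D}^2$; surjectivity of $\pi:\mathbb{D}^2\to\mathbb{G}$ gives $\pi(V)=W$, so that $\pi(\overline V)=\overline W$. To check that $V$ is distinguished, take $v\in\overline V\cap\partial\mathbb{D}^2$. Then $\pi(v)\in\pi(\overline V)\cap\partial\mathbb{G}=\overline W\cap\partial\mathbb{G}=\overline W\cap b\Gamma$ since $W$ is distinguished, so $\pi(v)\in b\Gamma$; the preimage dictionary then forces $v\in\mathbb{T}^2$, giving $\overline V\cap\partial\mathbb{D}^2=\overline V\cap\mathbb{T}^2$, as required.

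The routine parts are the algebraic manipulations; the main obstacle I anticipate is making rigorous the two interface claims, namely that $\pi$ sends varieties to varieties (handled by the integrality of $\mathbb{C}[s,p]\subseteq\mathbb{C}[z_1,z_2]$, that is, by $\pi$ being a finite and therefore closed morphism, or alternatively by Remmert's proper mapping theorem) and that closures and boundaries transport correctly under $\pi$ (handled by compactness of $\overline V,\overline W$ and the explicit preimage descriptions of $\partial\mathbb{G}$ and $b\Gamma$). Care is also needed to ensure that $\pi(V)$ is exactly $W$ and not a strictly larger slice of the ambient image, which is precisely what the symmetrization $\tilde V=V\cup\sigma(V)$ and the fact that whole $\pi$-fibres over $\mathbb{G}$ lie inside $\mathbb{D}^2$ are designed to guarantee.
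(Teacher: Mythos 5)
This lemma is stated in the paper without proof: it is imported verbatim from \cite{pal-shalit} (Lemma 3.1 there), so there is no internal argument to compare against, and the right benchmark is the proof in that reference. Your proposal is correct and is essentially that standard argument: the root-dictionary $\pi^{-1}(\mathbb G)=\mathbb D^2$, $\pi^{-1}(b\Gamma)\cap\overline{\mathbb D}^2=\mathbb T^2$, $\pi^{-1}(\partial\mathbb G)\cap\overline{\mathbb D}^2\subseteq\partial\mathbb D^2$, the compactness identity $\pi(\overline A)=\overline{\pi(A)}$, symmetrization plus finiteness of the integral extension $\mathbb C[s,p]\subseteq\mathbb C[z_1,z_2]$ to see that $\pi$ carries algebraic sets to algebraic sets in the forward direction, and $V=\pi^{-1}(W)\cap\mathbb D^2$ cut out by $Q\circ\pi$ in the converse. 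Two cosmetic points only: a variety in $\mathbb G$ is in general cut out by a \emph{set} of polynomials rather than a single $Q$, which changes nothing since you can pull back each generator; and your parenthetical dimension remark is not needed, because the definition of distinguished variety used here (and in \cite{pal-shalit}) is exactly the boundary condition $\overline{W}\cap\partial\mathbb G=\overline{W}\cap b\Gamma$, which is precisely what you verify in both directions. Your observation that whole $\pi$-fibres over $\mathbb G$ lie in $\mathbb D^2$ is indeed the point that makes $\pi(Z)\cap\mathbb G=\pi(\tilde V)=W$ exact rather than an inclusion, and it is handled correctly.
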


\begin{thm}\label{thm:DVsym}
Let $A$ be a square matrix A with $\omega(A)< 1$, and let $W$ be
the subset of $\mathbb G$ defined by
\begin{equation*}
W = \{(s,p) \in \mathbb G \,:\, \det(A + pA^* - sI) = 0\}.
\end{equation*}
Then $W$ is a distinguished variety. Conversely, every
distinguished variety in $\mathbb G$ has the form $\{(s,p) \in
\mathbb G \,:\, \det(A + pA^* - sI) = 0\}$, for some matrix $A$
with $\omega(A)\leq 1$.
\end{thm}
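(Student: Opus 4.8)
The plan is to prove the two implications separately: the forward direction is an essentially self-contained eigenvalue computation, while the converse mirrors, step for step, the argument of Theorem \ref{thm:DVchar} with the role of the tetrablock played by the symmetrized bidisc. Throughout I would use the description of $\mathbb G$ analogous to Theorem \ref{thm1}: a point $(s,p)$ lies in $\mathbb G$ if and only if $|p|<1$ and there is a $\beta\in\mathbb D$ with $s=\beta+\bar\beta p$; equivalently, $|p|<1$ and $|s-\bar s p|<1-|p|^2$, the unique such $\beta$ being $\beta=(s-\bar s p)/(1-|p|^2)$.

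For the forward direction, first note that since $A$ is a fixed matrix the entries of $A+pA^*-sI$ are affine in $(s,p)$, so $q(s,p)=\det(A+pA^*-sI)$ is a genuine polynomial and $W=\mathbb Z(q)\cap\mathbb G$ is a one-dimensional variety in $\mathbb G$; it is nonempty because for each $p$ with $|p|<1$ the matrix $A+pA^*$ has an eigenvalue $s$, and the computation below places $(s,p)$ in $\mathbb G$. To see that $W$ exits through $b\Gamma$ it suffices to show that every $(s,p)\in\overline W$ with $|p|<1$ already lies in $\mathbb G$. So fix such a point with $q(s,p)=0$; then $s$ is an eigenvalue of $A+pA^*$, and choosing a unit eigenvector $v$ and pairing with $v$ gives
\[
s=\langle (A+pA^*)v,v\rangle=\langle Av,v\rangle+p\,\overline{\langle Av,v\rangle}=a+\bar a p,
\]
where $a=\langle Av,v\rangle$ satisfies $|a|\le\omega(A)<1$. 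Thus $s=\beta+\bar\beta p$ with $\beta=a\in\mathbb D$, so $(s,p)\in\mathbb G$ by the characterization above. Hence $\overline W\cap(\partial\mathbb G\setminus b\Gamma)=\emptyset$, and $W$ is a distinguished variety.

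For the converse I would follow the pattern of Theorem \ref{thm:DVchar}. Given a one-dimensional distinguished variety $W$ in $\mathbb G$, place on $\partial W$ the measure $\mu$ coming from the desingularization/bounded-point-evaluation construction (the $\mathbb G$-analogue of Lemma \ref{basiclem2}), so that the kernels $k_{\bar\lambda}$ are dense in $H^2(\mu)$ and $(s,p)\in W$ if and only if $(\bar s,\bar p)$ is a joint eigenvalue of $(M_s^*,M_p^*)$ (the analogue of Lemma \ref{lemeval}). Exactly as in Lemma \ref{lempure}, the pair $(M_s,M_p)$ on $H^2(\mu)$ is a pure $\Gamma$-isometry, since each $k_{\bar\lambda}$ is an eigenvector of $M_p^*$ with eigenvalue $\bar p$, $|p|<1$, so $(M_p^*)^n$ annihilates the dense set of kernels as $n\to\infty$. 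A minimal-polynomial argument identical to the one producing (\ref{thm01}) shows $\operatorname{Ran}(I-M_pM_p^*)$ is finite-dimensional. The structure/model theorem for pure $\Gamma$-isometries then identifies $(M_s,M_p)$ with $(T_{F^*+Fz},T_z)$ on $H^2(\mathcal D)$, where $\mathcal D$ is now finite-dimensional and $F$ is the (matrix) fundamental operator of $(M_s^*,M_p^*)$, so $\omega(F)\le1$.

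Finally I would read off the representation. A joint eigenvector for $(T_{F^*+Fz}^*,T_z^*)$ with eigenvalue $(\bar s,\bar p)$ must lie in the $\bar p$-eigenspace $k_p\otimes\mathcal D$ of $T_z^*$, so it equals $k_p\otimes v$ and the relation $T_{F^*+Fz}^*(k_p\otimes v)=\bar s(k_p\otimes v)$ becomes $(F+\bar pF^*)v=\bar s v$; hence $(s,p)\in W$ iff $\det(F+\bar pF^*-\bar sI)=0$. Passing to adjoints inside the determinant turns this into $\det(F^*+pF-sI)=0$, so setting $A=F^*$ yields $W=\{(s,p)\in\mathbb G:\det(A+pA^*-sI)=0\}$ with $\omega(A)=\omega(F)\le1$. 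The main obstacle is that the converse rests on the $\Gamma$-contraction machinery not developed in this excerpt, namely the bounded-point-evaluation measure, the model theorem for pure $\Gamma$-isometries, and the existence-uniqueness of the fundamental operator $F$ with $\omega(F)\le1$; these are the symmetrized-bidisc counterparts of Lemmas \ref{basiclem2}, \ref{lempure}, \ref{lemeval} and Theorem \ref{model1}, which I would invoke from the $\Gamma$-contraction literature and \cite{pal-shalit}. (An alternative converse runs through Lemma \ref{lem:DVA} and the Agler--M\textsuperscript{c}Carthy representation of distinguished varieties in $\mathbb D^2$, but extracting the precise form $\det(A+pA^*-sI)=0$ is less direct than the operator-theoretic route.) The forward direction, by contrast, needs nothing beyond the elementary description of $\mathbb G$.
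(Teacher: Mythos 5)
The paper itself contains no proof of this statement---it is recalled verbatim from \cite{pal-shalit} (Theorem 3.5 there)---and your proposal is correct and essentially reconstructs the argument of that source: the forward direction via the unit-eigenvector computation $s=a+\bar a p$ with $a=\langle Av,v\rangle$, $|a|\le\omega(A)<1$, combined with the parametrization of $\mathbb G$ by $s=\beta+\bar\beta p$, $\beta\in\mathbb D$; the converse by transposing the proof of Theorem \ref{thm:DVchar} to $\mathbb G$, i.e., the bounded-point-evaluation measure, the pure $\Gamma$-isometry $(M_s,M_p)$, and the model $(T_{F^*+Fz},T_z)$ with fundamental operator $F$, $\omega(F)\le 1$ (the symmetrized-bidisc counterparts of Lemmas \ref{basiclem2}--\ref{lempure} and Theorem \ref{model1}, available in \cite{pal-shalit} and \cite{tirtha-sourav}). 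Your read-off of the determinantal representation from the joint eigenvectors $k_p\otimes v$ of $(T_{F^*+Fz}^*,T_z^*)$, with $A=F^*$, is exactly how the representation arises there, so both the approach and the details match.
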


Let us consider the holomorphic map whose source is Lemma
\ref{tirtha-thm1}:
\begin{align*} & \phi\,:\,
\overline{\mathbb E} \longrightarrow \Gamma \\& (x_1,x_2,x_3)
\mapsto (x_1+x_2,x_3).
\end{align*}

\begin{thm}\label{connection}
If
\[
\Omega=\{ (x_1,x_2,x_3)\in\mathbb E
\,:\,(x_1,x_2)\in\sigma_T(A_1^*+A_2x_3,A_2^*+A_1x_3) \}
\]
is a distinguished variety in $\mathbb E$, then $W=\phi (\Omega)$
is a distinguished variety in $\mathbb G$ provided that
$\|A_1^*+A_2z\|_{\infty, \mathbb T}<1$. Moreover, $\Omega$ gives
rise to a distinguished variety in $\mathbb D^2$.
\end{thm}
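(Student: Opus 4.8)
The plan is to identify $W=\phi(\Omega)$ \emph{explicitly} with a determinantal variety of the type characterized for $\mathbb G$ in Theorem \ref{thm:DVsym}, and then read off the bidisc statement from Lemma \ref{lem:DVA}. Concretely, set $B:=A_1^*+A_2^*$, so that $B^*=A_1+A_2$ and
\[
B+pB^*=(A_1^*+A_2^*)+p(A_1+A_2)=(A_1^*+pA_2)+(A_2^*+pA_1),
\]
and aim to prove $\phi(\Omega)=W_0:=\{(s,p)\in\mathbb G\,:\,\det(B+pB^*-sI)=0\}$. The forward inclusion is easy: if $(x_1,x_2,x_3)\in\Omega$ with joint unit eigenvector $\nu$ satisfying $(A_1^*+x_3A_2)\nu=x_1\nu$ and $(A_2^*+x_3A_1)\nu=x_2\nu$, then adding the two equations gives $(B+x_3B^*)\nu=(x_1+x_2)\nu$, so with $s=x_1+x_2$, $p=x_3$ one has $\det(B+pB^*-sI)=0$. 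That $(s,p)\in\mathbb G$ follows by transporting the parametrization of Theorem \ref{thm1}: writing $x_1=\beta_1+\bar\beta_2x_3$, $x_2=\beta_2+\bar\beta_1x_3$ with $|\beta_1|+|\beta_2|<1$, we get $s=\beta+\bar\beta p$ with $\beta=\beta_1+\beta_2$, $|\beta|<1$ and $|p|<1$, which is exactly the description of a point of $\mathbb G$. Hence $\phi(\Omega)\subseteq W_0$.

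Next I would verify the hypothesis of Theorem \ref{thm:DVsym}, namely $\omega(B)<1$. Since $A_1,A_2$ commute and $[A_1^*,A_1]=[A_2^*,A_2]$, the matrix $A_1^*+A_2z$ is normal for $z\in\mathbb T$, so the assumption $\|A_1^*+A_2z\|_{\infty,\mathbb T}<1$ reads $\omega(A_1^*+A_2z)<1$ for every $z\in\mathbb T$; Lemma \ref{funda-properties} then gives $\omega(A_1+A_2z)<1$ for every $z\in\mathbb T$. Taking $z=1$ and using $\omega(M^*)=\omega(M)$ yields $\omega(B)=\omega(A_1^*+A_2^*)=\omega(A_1+A_2)<1$. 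Thus $W_0$ is a distinguished variety in $\mathbb G$ by Theorem \ref{thm:DVsym}, and it remains only to upgrade $\phi(\Omega)\subseteq W_0$ to an equality.

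The hard part will be the reverse inclusion $W_0\subseteq\phi(\Omega)$, because membership in $\Omega$ requires a \emph{joint} eigenvalue of the pair $(A_1^*+pA_2,\,A_2^*+pA_1)$, whereas the equation defining $W_0$ only records an eigenvalue of their \emph{sum} $B+pB^*$, and an eigenvector of a sum need not be a joint eigenvector of the summands. The resolution I would use is that $A_1^*+pA_2$ and $A_2^*+pA_1$ commute for every $p$ (a short commutator computation using $[A_1,A_2]=0$ and $[A_1^*,A_1]=[A_2^*,A_2]$), even though they are in general \emph{not} normal when $|p|<1$. By Lemma \ref{spectra1} the commuting pair is simultaneously triangularizable, and in a common triangular basis the diagonal pairs are precisely the joint eigenvalues $(a_k,b_k)$; consequently the diagonal of $B+pB^*$ consists of the entries $a_k+b_k$, giving
\[
\det(B+pB^*-sI)=\prod_k\big(a_k+b_k-s\big).
\]
Therefore $(s,p)\in W_0$ forces $s=a_k+b_k$ for some $k$, so $(a_k,b_k)$ is a joint eigenvalue of $(A_1^*+pA_2,A_2^*+pA_1)$ with $a_k+b_k=s$. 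Since $(s,p)\in\mathbb G$ gives $|p|<1$ and the norm condition is strict, the computation already made in the proof of Theorem \ref{thm:DVchar} shows $(a_k,b_k,p)\in\mathbb E$, hence $(a_k,b_k,p)\in\Omega$ and $\phi(a_k,b_k,p)=(s,p)$. This proves $W_0\subseteq\phi(\Omega)$, so $\phi(\Omega)=W_0$ is a distinguished variety in $\mathbb G$.

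For the final ``moreover'' clause I would simply invoke Lemma \ref{lem:DVA}: since $W=\phi(\Omega)$ is a distinguished variety in $\mathbb G$, there is a distinguished variety $V$ in $\mathbb D^2$ with $W=\pi(V)$, where $\pi(z_1,z_2)=(z_1+z_2,z_1z_2)$; this $V$ is the distinguished variety in the bidisc to which $\Omega$ gives rise, completing the proof.
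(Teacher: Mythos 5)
Your proposal is correct and follows the same overall route as the paper: both identify $\phi(\Omega)$ with the determinantal set $\{(s,p)\in\mathbb G \,:\, \det((A_1+A_2)^*+p(A_1+A_2)-sI)=0\}$, obtain $\omega(A_1+A_2)<1$ from Lemma \ref{funda-properties}, and then invoke Theorem \ref{thm:DVsym} for $\mathbb G$ and Lemma \ref{lem:DVA} for the bidisc statement. The genuine difference is that you actually prove the reverse inclusion $W_0\subseteq\phi(\Omega)$ --- using commutativity of the pair $(A_1^*+pA_2,\,A_2^*+pA_1)$ (which needs $[A_1,A_2]=0$ and $[A_1^*,A_1]=[A_2^*,A_2]$), simultaneous triangularization as in Lemma \ref{spectra1}, and the identification of joint diagonal coefficients with joint eigenvalues, so that $\det(B+pB^*-sI)=\prod_k(a_k+b_k-s)$ --- whereas the paper's proof records only the forward implication (that $x_1+x_2$ is an eigenvalue of $(A_1+A_2)^*+x_3(A_1+A_2)$) and then simply writes the equality. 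Your triangularization step is precisely what legitimizes that equality, since an eigenvector of the sum need not a priori be a joint eigenvector of the summands; so your write-up is more complete than the paper's at exactly the point where the paper is terse. Two minor caveats: your check that $(s,p)\in\mathbb G$ uses the parametrization $s=\beta+\bar\beta p$ with $|\beta|<1$, $|p|<1$, a standard Agler--Young characterization of $\mathbb G$ that is not stated in the paper (Lemma \ref{tirtha-thm1} gives only the closed analogue), and, like the paper, you rely on the standing hypotheses $[A_1,A_2]=0$ and $[A_1^*,A_1]=[A_2^*,A_2]$ that are implicit in the representation of $\Omega$ coming from Theorem \ref{thm:DVchar}; both points are easily discharged but deserve explicit mention.
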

\begin{proof}
Clearly
\[
W=\{ (x_1+x_2,x_3)\,:\,(x_1,x_2,x_3)\in\Omega \}.
\]
Since $(x_1,x_2)\in\sigma_T(A_1^*+A_2x_3,A_2^*+A_1x_3)$, $x_1+x_2$
is an eigenvalue of $(A_1+A_2)^*+x_3(A_1+A_2)$. Therefore,
\begin{align*}
W &=\{ (x_1+x_2,x_3)\in\mathbb G\,:\, \det
[(A_1+A_2)^*+x_3(A_1+A_2)-(x_1+x_2)I]=0 \}\\
&=\{ (s,p)\in\mathbb G\,:\, \det [(A_1+A_2)^*+p(A_1+A_2)-sI]=0 \},
\end{align*}
where $\omega(A_1+A_2)<1$, by Lemma \ref{funda-properties}.
Therefore, by Theorem \ref{thm:DVsym}, $W$ is a distinguished
variety in $\mathbb G$. Also, the existence of a distinguished
variety $V$ in $\mathbb D^2$ with $\pi(V)=W$ is guaranteed by
Lemma \ref{lem:DVA}.

\end{proof}
It is still unknown whether the other way is also true, i.e,
whether every distinguished variety in $\mathbb G$ or in $\mathbb
D^2$ gives rise to a distinguished variety in $\mathbb E$. Our
wild guess to this question is in the negative direction and the
reason is that every distinguished variety in the symmetrized
bidisc has representation in terms of the fundamental operator of
a $\Gamma$-contraction as was shown in \cite{pal-shalit} and it is
still unknown whether every $\Gamma$-contraction gives rise to an
$\mathbb E$-contraction although the other way is true according
to Theorem \ref{tirtha-thm2}.

\section{A von-Neumann type inequality for $\mathbb E$-contractions}

\begin{thm}\label{thm:VN}
Let $\Upsilon=(T_1,T_2,T_3)$ be an $\mathbb E$-contraction on a
Hilbert space $\mathcal H$ such that $(T_1^*,T_2^*,T_3^*)$ is a
pure $\mathbb E$-contraction and that $\dim \mathcal D_{T_3} <
\infty$. Suppose that the fundamental operators $A_1,A_2$ of
$(T_1,T_2,T_3)$ satisfy $[A_1,A_2]=0$ and
$[A_1^*,A_1]=[A_2^*,A_2]$. If
\[
\Omega_{\Upsilon} = \{(x_1,x_2,x_3) \in \overline{\mathbb E} :
(x_1,x_2)\in \sigma_T(A_1^*+x_3A_2,A_2^*+x_3A_1)\},
\]
then for every scalar or matrix-valued polynomial $p$ in three
variables,
\[
\|p(T_1,T_2,T_3)\| \leq \max_{(x_1,x_2,x_3) \in
\Omega_{\Upsilon}\cap b\mathbb E} \|p(x_1,x_2,x_3)\|.
\]
Moreover, when $\omega(A_1+A_2z)<1$ for every $z$ of unit modulus,
$\Omega_\Upsilon \cap \mathbb E$ is a distinguished variety in the
tetrablock.
\end{thm}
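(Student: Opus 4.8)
The plan is to realise $(T_1,T_2,T_3)$ as the restriction, to a co-invariant subspace, of an explicit commuting normal model living over the circle $\mathbb{T}$, and then to read off $\|p(T_1,T_2,T_3)\|$ from the joint eigenvalues of that model, which are precisely the points of $\Omega_\Upsilon\cap b\mathbb{E}$. First I would apply Theorem \ref{dilation-theorem} to the pure $\mathbb{E}$-contraction $(T_1^*,T_2^*,T_3^*)$: the fundamental operators of \emph{its} adjoint are $A_1,A_2$, which by hypothesis commute and satisfy $[A_1^*,A_1]=[A_2^*,A_2]$, so the theorem yields the minimal pure $\mathbb{E}$-isometric dilation $Q_1=I\otimes A_1^*+M_z\otimes A_2$, $Q_2=I\otimes A_2^*+M_z\otimes A_1$, $V=M_z\otimes I$ on $H^2(\mathbb{D})\otimes\mathcal{D}_{T_3}$. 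By Proposition \ref{dilation-extension} the adjoint triple $(Q_1^*,Q_2^*,V^*)$ is a co-isometric extension of $(T_1,T_2,T_3)$; hence $\mathcal{H}$ is invariant under each $Q_i^*$ and $V^*$, and for every polynomial $p$ one has $p(T_1,T_2,T_3)=p(Q_1^*,Q_2^*,V^*)|_{\mathcal{H}}$, so $\|p(T_1,T_2,T_3)\|\le\|p(Q_1^*,Q_2^*,V^*)\|$.

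The next step is to evaluate the norm on the dilation space by passing to the boundary. The operators $Q_1,Q_2,V$ are the analytic Toeplitz operators with matrix symbols $\varphi(z)=A_1^*+A_2z$, $\psi(z)=A_2^*+A_1z$ and $z$, so their minimal unitary dilation is multiplication by these same symbols on $L^2(\mathbb{T},\mathcal{D}_{T_3})$. Since $H^2(\mathbb{D})\otimes\mathcal{D}_{T_3}$ is invariant under the analytic multipliers it is co-invariant for their adjoints, and the standard triangular-block argument gives $p(Q_1^*,Q_2^*,V^*)=P_{H^2}\,p(M_\varphi^*,M_\psi^*,M_z^*)|_{H^2}$. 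As a multiplication operator on $L^2(\mathbb{T})$ has norm equal to the essential supremum of the pointwise operator norm of its symbol, I obtain $\|p(Q_1^*,Q_2^*,V^*)\|\le\sup_{\theta}\big\|p\big(\varphi(e^{i\theta})^*,\psi(e^{i\theta})^*,e^{-i\theta}\big)\big\|$.

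It remains to identify this boundary supremum with the maximum of $p$ over $\Omega_\Upsilon\cap b\mathbb{E}$. For each fixed point of $\mathbb{T}$ the relations $[A_1,A_2]=0$ and $[A_1^*,A_1]=[A_2^*,A_2]$ force $\varphi(e^{i\theta})$ and $\psi(e^{i\theta})$ to be commuting \emph{normal} matrices (the computation underlying Lemma \ref{funda-properties}), so $p$ evaluated at the associated commuting normals has norm equal to the maximum of $\|p\|$ over their joint eigenvalues. Running the eigenvector/inner-product calculation from the first half of the proof of Theorem \ref{thm:DVchar} shows that each such joint eigenvalue, paired with its unimodular third coordinate, produces numbers $\alpha_1,\beta_1$ with $|\alpha_1|+|\beta_1|\le1$, so by Theorem \ref{thm1} the resulting triple lies in $\overline{\mathbb{E}}$ and, having $|x_3|=1$, in $b\mathbb{E}$; conversely every point of $\Omega_\Upsilon\cap b\mathbb{E}$ arises in this way. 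This gives the asserted inequality. The delicate part of the whole argument is exactly this spectral bookkeeping: one must carefully track the conjugations introduced both by passing to the adjoints $Q_i^*$ and by the Taylor spectrum of adjoints of normal matrices, and verify that the joint spectra of the boundary symbols sweep out precisely $\Omega_\Upsilon\cap b\mathbb{E}$ and not a reflected copy of it.

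For the final assertion, suppose $\omega(A_1+A_2z)<1$ for every $z\in\mathbb{T}$. By Lemma \ref{funda-properties} this yields $\omega(A_1^*+A_2z)<1$, and since $A_1^*+A_2z$ is normal its norm equals its numerical radius, so $\|A_1^*+A_2z\|_{\infty,\mathbb{T}}<1$. As $A_1,A_2$ are commuting matrices (this is where $\dim\mathcal{D}_{T_3}<\infty$ enters) with $[A_1^*,A_1]=[A_2^*,A_2]$, the forward direction of Theorem \ref{thm:DVchar} applies verbatim to the set (\ref{eq:W}), showing that $\Omega_\Upsilon\cap\mathbb{E}$ is a one-dimensional distinguished variety in $\mathbb{E}$.
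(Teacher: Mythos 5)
Your overall route is the paper's own: apply Theorem \ref{dilation-theorem} to the pure $\mathbb{E}$-contraction $(T_1^*,T_2^*,T_3^*)$ (whose adjoint has fundamental operators $A_1,A_2$), pass via Proposition \ref{dilation-extension} to the co-isometric Toeplitz extension with symbols $\varphi(z)=A_1^*+A_2z$, $\psi(z)=A_2^*+A_1z$, estimate by the $L^2$ multiplication operators, use normality of the boundary symbols (forced by $[A_1,A_2]=0$ and $[A_1^*,A_1]=[A_2^*,A_2]$) to reduce the norm to joint eigenvalues, and run the eigenvector/inner-product computation to place the resulting triples in $b\mathbb{E}$ via (\ref{dv1}); your last paragraph on the distinguished-variety claim, through Lemma \ref{funda-properties} and Theorem \ref{thm:DVchar}, is exactly the paper's as well.

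However, the step you yourself flag as delicate is asserted rather than proved, and as you state it, it fails. For $|x_3|=1$ the joint eigenvalues of the adjoint pair $(\varphi(e^{i\theta})^*,\psi(e^{i\theta})^*)$ are the conjugates $(\bar\lambda_1,\bar\lambda_2)$ of the joint eigenvalues $(\lambda_1,\lambda_2)$ of $(\varphi(e^{i\theta}),\psi(e^{i\theta}))$, so the triples your boundary estimate actually produces are $(\bar\lambda_1,\bar\lambda_2,e^{-i\theta})$, the coordinatewise conjugates of points of $\Omega_\Upsilon\cap b\mathbb{E}$. While $b\mathbb{E}$ is invariant under coordinatewise conjugation, $\Omega_\Upsilon$ is not: membership of $(\bar\lambda_1,\bar\lambda_2,e^{-i\theta})$ in $\Omega_\Upsilon$ would require $(\bar\lambda_1,\bar\lambda_2)\in\sigma_T(A_1^*+e^{-i\theta}A_2,\,A_2^*+e^{-i\theta}A_1)$, whereas normality of the symbols only gives $(A_1+e^{-i\theta}A_2^*)v=\bar\lambda_1 v$ for the common eigenvector $v$ --- the adjoint of the wrong matrix. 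Hence what your argument delivers is a bound by $\max\|p\|$ over the reflected set $\{(\bar x_1,\bar x_2,\bar x_3)\,:\,(x_1,x_2,x_3)\in\Omega_\Upsilon\cap b\mathbb{E}\}$ (the set the paper calls $\Omega_\Upsilon^*\cap b\mathbb{E}$), and your claim that the sweep is ``precisely $\Omega_\Upsilon\cap b\mathbb{E}$ and not a reflected copy of it'' is exactly the point that does not hold; the two sets genuinely differ in general (already for $A_2=0$ and $A_1$ a non-real normal scalar such as $i/2$). The paper does not sidestep this either: it introduces the conjugate polynomial $p_*$ with $p_*(A,B)=p(A^*,B^*)^*$, evaluates $p_*$ (not $p$) on the eigenvalue triples, and then trades maxima between the two varieties using $\|p_*(x_1,x_2,x_3)\|=\|p(\bar x_1,\bar x_2,\bar x_3)\|$, via the displayed identity $\max_{\Omega_\Upsilon^*\cap b\mathbb{E}}\|p_*\|=\max_{\Omega_\Upsilon\cap b\mathbb{E}}\|p\|$. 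That $p_*$/reflection bookkeeping is the one piece of content your proposal omits, and it is not cosmetic: without it the inequality you have actually established is over $\Omega_\Upsilon^*\cap b\mathbb{E}$, and upgrading it to $\Omega_\Upsilon\cap b\mathbb{E}$ requires precisely the conversion step the paper performs (or a proof that the boundary set is conjugation-symmetric, which is false in general).
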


\begin{proof}

Suppose that $\dim \mathcal D_{T_3}=n $ and then $A_1,A_2$ are
commuting matrices of order $n$. We apply Theorem
\ref{dilation-theorem} to the pure $\mathbb E$-contraction
$(T_1^*,T_2^*,T_3^*)$ to get an $\mathbb E$-co-isometric extension
on $H^2(\mathcal D_{T_3})$ of $(T_1,T_2,T_3)$. Therefore,
\[
T_{A_1^*+A_2z}^*|_{\mathcal H}=T_1,\;T_{A_2^*+A_1z}^*|_{\mathcal
H}=T_2, \text{ and } T_z^*|_{\mathcal H}=T_3.
\]
Let $\varphi$ and $\psi$ denote the $\mathcal L(\mathcal D_{T_3})$
valued functions $\varphi(z)=A_1^*+A_2z$ and $\psi(z)=A_2^*+A_1z$
respectively. Let $p$ be a scalar or matrix-valued polynomial in
three variables and let $p_*$ be the polynomial satisfying
$p_*(A,B)=p(A^*,B^*)^*$ for any two commuting operators $A,B$.
Then
\begin{align*}
\|p(T_1,T_2,T_3)\| &\leq \|p(T_{\varphi}^*,T_{\psi}^*, T_z^*)\|_{H^2(\mathcal D_{T_3})} \\
&= \|p_*(T_{\varphi},T_{\varphi}, T_z)\|_{H^2(\mathcal D_{T_3})}
\\& \leq \|p_*(M_{\varphi},M_{\varphi},M_z)\|_{L^2( \mathcal D_{T_3})} \\
& = \max_{\theta \in [0,2\pi]} \|p_*
(\varphi(e^{i\theta}),\psi(e^{i\theta}), e^{i\theta}I)\|.
\end{align*}
It is obvious from Theorem \ref{tu} that
$(M_{\varphi},M_{\varphi},M_z)$ is an $\mathbb E$-unitary as $M_z$
on $L^2(\mathcal D_{T_3})$ is a unitary. Therefore, $M_{\varphi}$
and $M_{\psi}$ are commuting normal operators and hence
$\varphi(z)$ and $\psi(z)$ are commuting normal operators for
every $z$ of unit modulus. Therefore,
\[
\|p_*(\varphi(e^{i\theta}),\psi(e^{i\theta}), e^{i\theta}I)\|=
\max \{ |p_*(\lambda_1,\lambda_2,e^{i\theta})|:\,
(\lambda_1,\lambda_2)\in
\sigma_T(\varphi(e^{i\theta}),\psi(e^{i\theta}) \}.
\]
Let us define
\[
\Omega_\Upsilon=\{ (x_1,x_2,x_3)\in\overline{\mathbb E}:\;
(x_1,x_2)\in \sigma_T(A_1^*+x_3A_2,A_2^*+x_3A_1) \}
\]
and
\begin{align*}
\Omega_\Upsilon^* &=\{ (x_1,x_2,x_3)\in\overline{\mathbb E}:\;
(x_1,x_2)\in \sigma_T(A_1+x_3A_2^*,A_2+x_3A_1^*) \} \\& =\{
(\bar{x_1},\bar{x_2},\bar{x_3})\in\overline{\mathbb E}:\;
(x_1,x_2)\in \sigma_T(A_1^*+x_3A_2,A_2^*+x_3A_1) \}.
\end{align*}
It is obvious that both $\Omega_\Upsilon$ and $\Omega_\Upsilon^*$
are one-dimensional subvarieties of $\overline{\mathbb E}$. We now
show that if $(\lambda_1,\lambda_2)\in
\sigma_T(\varphi(e^{i\theta}),\psi(e^{i\theta}))$ then
$(\lambda_1,\lambda_2,e^{i\theta})$ is in $\overline{\mathbb E}$.
There exists a unit vector $\nu$ such that
\[
(A_1^*+e^{i\theta}A_2)\nu=\lambda_1\nu \text{ and }
(A_2^*+e^{i\theta}A_1)\nu=\lambda_2\nu.
\]
Taking inner product with $\nu$ we get $
\beta_1+\bar{\beta_2}e^{i\theta}=\lambda_1 \textup{ and }
\beta_2+\bar{\beta_1}e^{i\theta}=\lambda_2 , $ where
$\beta_1=\langle A_1^*\nu,\nu \rangle$ and $\beta_2=\langle
A_2^*\nu,\nu \rangle$. Since $A_1,A_2$ are the fundamental
operators, by Lemma \ref{funda-properties}, $\omega(A_2^*+zA_1)$
is not greater than $1$ for every $z$ of unit modulus. Therefore,
\[
|\lambda_2|=|\beta_1+\bar{\beta_2}e^{i\theta}| \leq 1.
\]
Again
\[
\bar{\lambda}_2e^{i\theta}=\beta_1+\bar{\beta_2}e^{i\theta}=\lambda_1.
\]
Therefore by (\ref{dv1}), $(\lambda_1,\lambda_2,e^{i\theta})\in
b\mathbb E \subseteq \overline{\mathbb E}$. Therefore, we conclude
that
\begin{align*}
\| p(T_1,T_2,T_3) \| &\leq \max_{(x_1,x_2,x_3)\in
\Omega_\Upsilon^* \cap b\mathbb E} \| p_*(x_1,x_2,x_3) \|
\\&=\max_{(x_1,x_2,x_3)\in \Omega_\Upsilon \cap b\mathbb E} \|
p(x_1,x_2,x_3) \|.
\end{align*}
If $\omega(A_1+A_2z)<1$ for all $z\in\mathbb T$, by Lemma
\ref{funda-properties},
$\omega(A_1^*+A_2z)=\|A_1^*+A_2z\|_{\infty, \mathbb T}<1$. It is
now obvious from Theorem \ref{thm:DVchar} that $\Omega_\Upsilon
\cap \mathbb E$ is a distinguished variety in the tetrablock.

\end{proof}

We have established the fact that if an $\mathbb E$-contraction
$(T_1,T_2,T_3)$ satisfies the hypotheses of Theorem \ref{thm:VN}
then there is an $\mathbb E$-co-isometric extension of
$(T_1,T_2,T_3)$ that lives on the corresponding variety
$\Omega_\Upsilon$. This also makes $\Omega_\Upsilon ^*$ a complete
spectral set for $(T_1^*,T_2^*,T_3^*)$. Obviously this is valid
when $(T_1,T_2,T_3)$ consists of matrices and $\| T_3 \|<1$. We do
not know whether there is a bigger class of $\mathbb
E$-contractions for which von-Neumann type inequality is valid on
such a one-dimensional subvariety. This line of proof of Theorem
\ref{thm:VN} will no longer be valid if there is a bigger class
and also $\Omega_\Upsilon ^*$ will not be a complete spectral set
for $(T_1^*,T_2^*,T_3^*)$ in that case as we have used the
functional model of $(T_1^*,T_2^*,T_3^*)$ in the proof.

\vspace{0.62cm}

\noindent \textbf{Acknowledgement.} A part of this work was done
when the author was visiting Ben-Gurion University of the Negev,
Israel and was funded by the Skirball Foundation via the Center
for Advanced Studies in Mathematics at Ben-Gurion University of
the Negev, Israel. The author conveys his sincere thanks to Orr
Shalit who made numerous invaluable comments on the first draft of
this article. Also in course of the work the author visited Indian
Statistical Institute, Delhi Centre and greatly appreciates the
warm and generous hospitality provided there. Finally the author
thanks his colleagues in Indian Institute of Technology Bombay for
creating an amazing research atmosphere where the work was
completed.


\begin{thebibliography}{99}


\bibitem{awy} A. A. Abouhajar, M. C. White and N. J. Young, A
Schwarz lemma for a domain related to $\mu$-synthesis, \textit{J.
Geom. Anal.}, 17 (2007), 717 -- 750.

\bibitem{awy-cor} A. A. Abouhajar, M. C. White and N. J. Young, Corrections to 'A
Schwarz lemma for a domain related to $\mu$-synthesis', available
online at
http://www1.maths.leeds.ac.uk/~nicholas/abstracts/correction.pdf

\bibitem{AM05} J. Agler and J.E. M\raise.45ex\hbox{c}Carthy,
Distinguished varieties, \emph{Acta Math.}, 194 (2005), no. 2, 133
-- 153.

\bibitem{AM06} J. Agler and J.E. M\raise.45ex\hbox{c}Carthy,
Parametrizing distinguished varieties, \textit{Contemp. Math.},
393 (2006), 29 -- 34.

\bibitem{ay-jfa} J. Agler and N.J. Young, A commutant lifting theorem for a domain in $\mathbb{C}^2$ and spectral
interpolation, \textit{J. Funct. Anal.}, 161 (1999), 452--477.

\bibitem{ay-jot} J. Agler and N.J. Young, A model theory for
$\Gamma$-contractions, \textit{J. Operator Theory}, 49 (2003), 45
-- 60.

\bibitem{wermer} H. Alexander and J. Wermer,
Several complex variables and Banach algebras, \textit{Graduate
Texts in Mathematics, 35; 3rd Edition}, Springer, (1997).

\bibitem{arveson3} W. Arveson, The curvature invariant of a Hilbert module over
$\mathbb C[z_1,\cdots, z_d]$, \textit{J. Reine Angew. Math.}, 522
(2000), 173 -– 236.

\bibitem{ball} J. A. Ball, I. Gohberg and L. Rodman, Interpolation of rational matrix functions,
\textit{OT45, Birkh\"{a}user Verlag}, Basel, (1990).

\bibitem{tirtha} T. Bhattacharyya, The tetrablock as a spectral
set, \textit{Indiana Univ. Math. J.},  63 (2014), 1601 –- 1629.

\bibitem{tirtha-sourav} T. Bhattacharyya, S. Pal and S. Shyam Roy, Dilations of
$\Gamma$- contractions by solving operator equations, \textit{Adv.
Math.}, 230 (2012), 577 -- 606.

\bibitem{tirtha-sourav1} T. Bhattacharyya and S. Pal, A functional
model for pure $\Gamma$-contractions, \textit{J. Operator Theory},
71 (2014), 327 –- 339.

\bibitem{tirtha-sau} T. Bhattacharyya and H. Sau, Normal boundary dilations in
two inhomogeneous domains, \textit{arXiv:1311.1577v1 [math.FA]}.

\bibitem{tirtha-sau-lata} T. Bhattacharyya, S. Lata and H. Sau, Admissible fundamental
operators, \textit{ J. Math. Anal. Appl.},  425 (2015), 983 -–
1003.

\bibitem{curto} R. Curto, Applications of several complex variables to multiparameter
spectral theory. Surveys of some recent results in operator
theory, Vol. II, 25–90, \textit{Pitman Res. Notes Math. Ser.},
192, \textit{Longman Sci. Tech., Harlow}, 1988.

\bibitem{EZ} A. Edigarian and W. Zwonek, Schwarz lemma for the tetrablock, \textit{Bull. Lond. Math. Soc.}, 41 (2009),
506 -- 514.

\bibitem{EKZ} A. Edigarian, L. Kosinski and W. Zwonek, The Lempert theorem and the tetrablock,
\textit{J. Geom. Anal.}, 23 (2013), 1818 -- 1831.

\bibitem{fischer} G. Fischer, Plane algebraic curves, \textit{Stud. Math. Libr.}, 15.
\textit{Amer. Math. Soc.}, Providence, RI, 2001.

\bibitem{Fuglede} B. Fuglede, A commutativity theorem for normal
operators, \textit{Nat. Acad. Sci.}, 36 (1950), 35 -- 40.

\bibitem{harris} P. Griffiths and J. Harris, Principles of Algebraic Geometry,
\textit{Wiley, New York}, 1978.

\bibitem{griffiths} P. Griffiths, Introduction to algebraic curves,
\textit{Transl. Math. Monographs, 76. Amer. Math. Soc.}, 1989.

\bibitem{hong} D. Hong-Ke and P. Jin, Perturbation of spectrums
of $2\times 2$ operator matrices, \textit{Proceedings of the
American Mathematical Society}, 121 (1994), 761-766.

\bibitem{KLMV} M. S. Livšic, N. Kravitsky, A. S. Markus and V. Vinnikov,
Theory of commuting nonselfadjoint operators, \textit{Mathematics
and its Applications}, 332, \textit{Kluwer Academic Publishers
Group, Dordrecht}, 1995.

\bibitem{Knese10} G. Knese, Polynomials defining distinguished varieties,
\textit{Trans. Amer. Math. Soc.}, 362 (2010), 5635 -- 5655.

\bibitem{Muller} V. Müller, On the Taylor functional calculus,
\textit{Studia Math.}, 150 (2002), 79–97.

\bibitem{sourav} S. Pal, From Stinespring dilation to Sz.-Nagy
dilation on the symmetrized bidisc and operator models, {\em New
York J. Math.}, 20 (2014), 645 -- 664.

\bibitem{sourav1} S. Pal, The Failure of rational dilation on the tetrablock,
\textit{J. Funct. Anal.}, 269 (2015), 1903 -- 1924.

\bibitem{sourav2} S. Pal, Canonical decomposition of a tetrablock
contraction and operator model, \textit{J. Math. Anal. Appl.}, 438
(2016), 274 -- 284.

\bibitem{pal-shalit} S. Pal and O. M. Shalit, Spectral sets and
distinguished varieties in the symmetrized bidisc, \textit{J.
Funct. Anal.}, 266 (2014), 5779 -- 5800.

\bibitem{sau} H. Sau, A note on tetrablock contractions, \textit{arXiv:1312.0322v1 [math.FA]}.

\bibitem{nagy} B. Sz.-Nagy, C. Foias, H. Bercovici and L. Kerchy,
Harmonic analysis of operators on Hilbert space, Universitext,
\textit{Springer, New York}, 2010.

\bibitem{Taylor} J. L. Taylor, The analytic-functional calculus for several commuting
operators, \textit{Acta Math.} 125 (1970), 1–-38.

\bibitem{Taylor1} J. L. Taylor, A joint spectrum for several commuting operators,
\textit{J. Funct. Anal.}, 6 (1970), 172–191.

\bibitem{young} N. J. Young, The automorphism group of the
tetrablock, \textit{J. London Math. Soc.}, 77 (2008), 757 -- 770.

\bibitem{Zwo} W. Zwonek, Geometric properties of the tetrablock,
\textit{Arch. Math.}, 100 (2013), 159 –- 165.


\end{thebibliography}
\end{document}